\newcommand{\eqdef}{\stackrel{{\scriptsize\rm def}}{=}}
\definecolor{MidnightBlack}{rgb}{0.1,0.1,0.30}
\definecolor{MidnightBlue}{rgb}{0.1,0.1,0.44}
\definecolor{Black}{rgb}{0,0, 0}
\definecolor{Blue}{rgb}{0, 0 ,1}
\definecolor{Red}{rgb}{1, 0 ,0}
\definecolor{White}{rgb}{1, 1, 1}
\definecolor{Grey}{rgb}{.6, .6, .6}
\definecolor{Mygreen}{rgb}{.0, .7, .0}
\definecolor{Yellow}{rgb}{.55,.55,0}
\definecolor{Mustard}{rgb}{1.0, 0.86, 0.35}
\definecolor{applegreen}{rgb}{0.55, 0.71, 0.0}
\definecolor{darkturquoise}{rgb}{0.0, 0.81, 0.82}
\definecolor{celestialblue}{rgb}{0.29, 0.59, 0.82}
\definecolor{green_yellow}{rgb}{0.68, 1.0, 0.18}
\definecolor{crimsonglory}{rgb}{0.75, 0.0, 0.2}
\definecolor{darkmagenta}{rgb}{0.30, 0.0, 0.30}
\definecolor{internationalorange}{rgb}{1.0, 0.31, 0.0}
\definecolor{darkorange}{rgb}{1.0, 0.55, 0.0}
\newcommand{\blue}[1]{{\color{Blue}#1}}
\newcommand{\red}[1]{{\color{Red}#1}}
\newcommand{\green}[1]{{\color{Mygreen}#1}}
\newcommand{\orange}[1]{{\color{darkorange}#1}}
\newcommand{\remove}[1]{}
\newcommand{\bigmid}{\;\big|\;}
\newcommand{\cupall}{\pmb{\pmb{\bigcup}}}
\newcommand{\pretp}{\preceq_\mathsf{tm}}
\newcommand{\prem}{\preceq_\mathsf{m}}
\newcounter{func}
\newcommand{\newfun}[1]{f_{\refstepcounter{func}\label{#1}\thefunc}}
\newcommand{\funref}[1]{\hyperref[#1]{f_{\ref*{#1}}}} 
\newcounter{con}
\newcommand{\conref}[1]{\hyperref[#1]{c_{\ref*{#1}}}} 
\newcommand{\bound}[1]{\textbf{#1}}
\tikzset{red node/.style={draw=red, circle, fill = red, minimum size = 3pt, inner sep = 0pt}}
\tikzset{yellow node/.style={draw=yellow, circle, fill = yellow, minimum size = 4pt, inner sep = 0pt}}
\tikzset{blue node/.style={draw=celestialblue, circle, fill =celestialblue, minimum size = 3pt, inner sep = 0pt}}
\tikzset{green node/.style={draw=Mygreen, circle, fill =Mygreen, minimum size = 3pt, inner sep = 0pt}}
\tikzset{triangle/.style = { regular polygon, regular polygon sides=3, rotate=180}}
\tikzset{small red/.style={draw=red, triangle, fill = red, minimum size = 2pt, inner sep = 0pt}}
\tikzset{black node/.style={draw, circle, fill = black, minimum size = 3pt, inner sep = 0pt}}
\tikzset{small black node/.style={draw, circle, fill = black, minimum size = 3pt, inner sep = 0pt}}
\tikzset{model node/.style={draw=celestialblue, circle, fill = celestialblue, minimum size = 5pt, inner sep = 0pt}}
\tikzset{model node small/.style={draw=celestialblue, circle, fill = celestialblue, minimum size = 3pt, inner sep = 0pt}}
\tikzset{rep node/.style={draw=red, circle, fill = red, minimum size = 3pt, inner sep = 0pt}}
\tikzset{track node 1/.style={draw, circle, fill = black, minimum size = 2pt, inner sep = 0pt}}
\tikzset{track node 2/.style={draw=black!30!white, circle, fill = black!30!white, minimum size = 2pt, inner sep = 0pt}}
\tikzset{track node 3/.style={draw=black!10!white, circle, fill = black!10!white, minimum size = 2pt, inner sep = 0pt}}
\newcommand{\mynewtheorem}[2]{
	\newaliascnt{#1}{dummy}
	\newtheorem{#1}[#1]{#2}
	\aliascntresetthe{#1}
}
\theoremstyle{plain}
\theoremstyle{definition}
\theoremstyle{remark}
\newcommand*\samethanks[1][\value{footnote}]{\footnotemark[#1]}
\providecommand{\customgenericname}{}
\begin{document}

\title{$k$-apices of minor-closed graph classes.\! I. Bounding the obstructions\thanks{Some of the results of this paper appeared in the \emph{Proceedings of the 47th International Colloquium on Automata, Languages and Programming (\textbf{ICALP}), volume 168 of LIPICs, pages 95:1--95:20, \textbf{2020}}.}}

\author{\bigskip Ignasi Sau\thanks{LIRMM, Univ Montpellier, CNRS, Montpellier, France. {Supported}  by   the ANR projects DEMOGRAPH (ANR-16-CE40-0028), ESIGMA (ANR-17-CE23-0010), ELIT (ANR-20-CE48-0008), and the French-German Collaboration ANR/DFG Project UTMA (ANR-20-CE92-0027).  Emails:  \texttt{ignasi.sau@lirmm.fr}, \texttt{giannos.stamoulis@lirmm.fr}, \texttt{sedthilk@thilikos.info}.}\and
	Giannos Stamoulis\samethanks[2]
	\and
	Dimitrios  M. Thilikos\samethanks[2]}
\date{}

\maketitle

\begin{abstract}
\noindent Let $\mathcal{G}$ be a minor-closed graph class.
We say that a graph $G$ is a \emph{$k$-apex} of $\mathcal{G}$ if  $G$ contains a  set $S$ of at most $k$ vertices such that  $G\setminus S$ belongs to $\mathcal{G}.$ We denote by $\mathcal{A}_k (\mathcal{G})$ the set of all graphs that are $k$-apices of $\mathcal{G}.$
We prove that every graph in the obstruction set of $\mathcal{A}_k (\mathcal{G}),$  i.e., the minor-minimal set of graphs not belonging to $\mathcal{A}_k (\mathcal{G}),$  has order
at most $2^{2^{2^{2^{\mathsf{poly}(k)}}}},$ where $\mathsf{poly}$ is a polynomial function whose degree depends on the order of the minor-obstructions of $\mathcal{G}.$
This bound drops to $2^{2^{\mathsf{poly}(k)}}$ when $\mathcal{G}$ excludes some apex graph as a minor.
\bigskip

\noindent \textbf{Keywords}: graph minors; obstructions; treewidth; irrelevant vertex technique; Flat Wall Theorem.
\end{abstract}

\newpage

\tableofcontents

\newpage

\section{Introduction}
A graph class $\mathcal{G}$ is
\emph{minor-closed} if every minor\footnote{A graph $H$ is a \emph{minor} of $G$ if a graph isomorphic to $H$
	can be obtained from some subgraph of $G$ after applying edge contractions. As in this paper we consider only simple graphs, we insist
	that in case multiple edges are created after a contraction, then these edges are automatically suppressed to simple edges, while in the case that loops are created, they are automatically removed from the graph.}
of a graph in $\mathcal{G}$
is also a member of $\mathcal{G}.$
Given a
graph class $\mathcal{G},$  its \emph{minor obstruction set}, denoted by $\mathbf{obs}(\mathcal{G}),$  is defined as the set of all minor-minimal graphs not in $\mathcal{G},$ called
	\emph{minor obstructions of} $\mathcal{G}.$
Given a set of graphs $\mathcal{F},$ we denote by $\mathbf{excl}(\mathcal{F})$ as the set containing every graph $G$ that excludes all graphs in $\mathcal{F}$ as minors.
Clearly, for every minor-closed graph class $\mathcal{G},$ $\mathcal{G}=\mathbf{excl}(\mathbf{obs}(\mathcal{G})).$ This implies
that  the obstruction set  $\mathbf{obs}(\mathcal{G})$ can be seen
as a complete characterization of $\mathcal{G}$ in terms of excluded minors.

An algorithmic consequence of the above
concerns the  \textsc{Membership in $\mathcal{G}$} problem that asks,
given an $n$-vertex graph $G,$ whether $G\in \mathcal{G}.$
It follows that if $\mathcal{G}$ is minor-closed then
the \textsc{Membership in $\mathcal{G}$} problem
is equivalent to checking whether $G$ excludes as a minor
all the graphs in the  set $\mathbf{obs}(\mathcal{G}),$ and therefore
is reduced to the \textsc{Minor Checking} problem that asks, given two graphs $G$
and $H,$  whether $H$ is a minor of $G.$

\subsection{Obstruction sets}
According to the celebrated Robertson and Seymour's theorem~\cite{RobertsonSXX} \textsl{there is no infinite set of graphs where every pair of graphs is non-comparable by the minor relation}.
This result implies that,  for every graph class $\mathcal{G},$ the set $\mathbf{obs}(\mathcal{G})$ is \textsl{finite}.
Moreover, the seminal algorithmic result
of the Graph Minors series
is an algorithm solving the \textsc{Minor Checking} problem  in  $f(h)\cdot n^3$-time, where $h$ is the order of $H$ and $f$ is some function of {$h$}. This
algorithm has been improved to a quadratic one in~\cite{KawarabayashiKR11thed} and this, along with the finiteness
of $\mathbf{obs}(\mathcal{G}),$ implies that, for every  minor-closed graph class $\mathcal{G},$
there \textsl{exists} a quadratic-time algorithm for the \textsc{Membership in $\mathcal{G}$} problem. However, this does not mean that we can actually \textsl{construct}
such an algorithm, as this requires first to construct the set  $\mathbf{obs}(\mathcal{G}).$

Interestingly, the proof of Robertson and Seymour's theorem is not constructive.
Friedman, Robertson, and Seymour proved in~\cite{FriedmanRS87them} that the bounded\footnote{The ``bounded version'' of Robertson and Seymour's theorem is the one where the graphs in its statement are restricted to have bounded treewidth.} version of the
Robertson and Seymour's theorem
is equivalent to the extended Kruskal's theorem which  is proof-theoretically stronger than $\Pi^1_1$-CA$_{0}$ (see the work of Krombholz and Rathjen~\cite{krombholz2019upper} for recent  results on the  meta-mathematics of
Robertson and Seymour's theorem~\cite{FriedmanRS87them}).
This rules out the existence of
a proof yielding a way to construct $\mathbf{obs}(\mathcal{G}).$
Moreover, in the same impossibility direction, Fellows and Langston~\cite{FellowsL88nonc}
proved, using a reduction from the \textsc{Halting} problem, that  there is no algorithm that
given a finite description of a minor-closed class $\mathcal{G},$
outputs $\mathbf{obs}(\mathcal{G}).$
Additional conditions, mostly related to logic, that can
guaranty the computability of obstruction sets
have been extensively investigated in \cite{FellowsL94onse,LagergrenA91mini,Lagergren91anup,Lagergren98,GuptaI97boun,AbrahamsonF93,AdlerGK08comp,CourcelleDF97,Bienstock95,FellowsL88nonc,ArnborgPS90tree,DowneyF95survey,GuptaI97boun}.

\medskip

The  study of $\mathbf{obs}(\mathcal{G})$ for distinct instantiations of minor-closed graph classes $\mathcal{G}$
is an active topic in graph theory. In the best
of the cases, such results
achieve the \textsl{complete identification} of
the obstruction set \cite{LeivaditisSSTTV20mino,Holst02onth,RobertsonST95sach,BodlaenderThil99,Thilikos00,ArnborgPC90forb,Archdeacon06akur,DinneenX02mino,KinnersleyL94,FioriniHJV17thee,DinneenV12obst,Wagner37uber}
or resort to a partial characterization, by identifying
subsets of them with particular properties \cite{Pierce14PhDThesis,Yu06more,LiptonMMPRT16sixv,JobsonK21allm,GagarinMC09theb,MoharS12obst}.
A more general  line of research is to study
	\textsl{parameterized} minor-closed graph classes.
A parameterized minor-closed graph class
is a collection $\{\mathcal{G}_{i}\mid i\geq 0\},$ where
$\mathcal{G}_i$ is typically defined
as the set of graphs for which  the value of some minor-closed graph parameter\footnote{A \emph{graph parameter} is a function
	mapping graphs to non-negative integers and is \emph{minor-closed} if it cannot increase when taking minors.} is bounded by $i.$
To our knowledge, no known result exactly identifies $\mathbf{obs}(\mathcal{G}_i),$ for all $i\in\mathbb{N}.$
However, there are results that either identify
all graphs in  $\mathbf{obs}(\mathcal{G}_i)$ with some particular property, implying –typically huge– lower bounds on the cardinality of $|\mathbf{obs}(\mathcal{G}_i)|,$ as a function of $i$ \cite{Dinneen97,MattmanP16thea,RueST12oute,KoutsonasTY14oute,BienstockD92ono,Ramachandramurthi97thes,DinerGST21block}, or provide upper bounds on the order of the graphs in $\mathbf{obs}(\mathcal{G}_i)$ as a function of $i$ \cite{FellowsL89anan,NishimuraRT05para,ChatzidimitriouTZ20spar,Lagergren98,GuptaI97boun,FominLMS12plan,DinneenL07prop,Seymour94abou}. These {latter} results are interesting as they
yield the computability of $\mathbf{obs}(\mathcal{G}_i)$ and directly  imply the \textsl{constructibility}
of fixed-parameter tractable algorithms for
the corresponding graph parameters.
For  more on the interrelation between obstruction sets and parameterized algorithms, see~\cite{FellowsJ13fpti,FellowsL88nonc,FellowsL94onse}.
We wish to stress that the references that we give above
are indicative and by no means complete. See also~\cite{Mattman16forb,Adler08open} for  related surveys.

\subsection{Apices of minor-closed classes}

Given a non-negative integer $k$ and a graph class $\mathcal{G},$ we say that
a graph  $G$ is a \emph{$k$-apex of} $\mathcal{G}$ if it can be transformed to a member of $\mathcal{G}$ after removing at most $k$  vertices.
We denote the set of all $k$-apices of $\mathcal{G}$ by  $\mathcal{A}_{k}(\mathcal{G}).$
The study of
$k$-apices of graph classes is quite extensive both in combinatorics and algorithms. The (meta)problem
\textsc{Vertex Deletion to $\mathcal{G}$} asking, given a graph $G$  and an integer $k,$ whether $G$ is a $k$-apex of $\mathcal{G},$
is  part  of the wider family of \textsl{Graph Modification Problems}
and
can be seen as the prototypical setting of the
``\textsl{small distance from triviality}'' question~\cite{GHN04,FominSM15grap}.

It is easy to see that if $\mathcal{G}$ is minor-closed, then $\mathcal{A}_{k}(\mathcal{G})$ is also minor-closed for every $k\geq 0.$ Clearly, $\mathbf{obs}(\mathcal{A}_{k}(\mathcal{G}))$  constitutes a complete characterization of the class of the $k$-apices of $\mathcal{G}$ and in many cases it characterizes several known graph parameters. For instance, graphs with a vertex cover of size at most $k$ are the graphs in
$\mathcal{A}_{k}(\mathbf{excl}(\{K_{2}\})),$ graphs
with a  feedback vertex set of size at most $k$ are
the graphs in $\mathcal{A}_{k}(\mathbf{excl}(\{K_{3}\})),$ and $k$-apex planar graphs  (also known as \textsl{ apex graphs})
are the graphs in $\mathcal{A}_{k}(\mathbf{excl}(\{K_{5},K_{3,3}\})).$\medskip

Given a finite collection of graphs $\mathcal{F}$
and a non-negative integer $k,$ we set $$\mathcal{F}^{(k)}=\mathbf{obs}(\mathcal{A}_{k}({\mathbf{excl}}(\mathcal{F}))).$$
Notice that if $\mathcal{G}$ is a minor-closed graph class and $\mathcal{F}=\mathbf{obs}(\mathcal{G}),$ then
$\mathcal{F}^{(k)}=\mathbf{obs}(\mathcal{A}_{k}(\mathcal{G})).$

Adler, Grohe, and Kreutzer made an important  step in~\cite{AdlerGK08comp} (see also~\cite{FellowsL94onse}) on the algorithmic study of $\mathcal{F}^{(k)}$
by proving  that it is effectively computable: there is a
Turing Machine that receives $\mathcal{F}$ and $k$
as input and, after \textsl{some finite number of steps}, outputs the set $\mathcal{F}^{(k)}.$ Here we need to stress that no  bound for the running time function of such a Turing Machine is given in~\cite{AdlerGK08comp}. This can be overcome  by a proof of an explicit  combinatorial bound on the size of
$\mathcal{F}^{(k)}.$

Up to now, the most general combinatorial bound on $\mathcal{F}^{(k)}$ is given in~\cite{FominLMS12plan},
where it is proven that if $\mathcal{F}$ contains some \textsl{planar} graph, then every graph in $\mathcal{F}^{(k)}$
has $\mathcal{O}(k^{h})$ vertices, where $h$ is some constant depending (non-constructively) on $\mathcal{F}$ (see \cite{NishimuraRT05para,DinneenL07prop,ZorosPhD17} for  low polynomial bounds on  special cases
of this result). \smallskip

Apart from the above general results, a lot
of work   has been devoted to the identification
of $\mathcal{F}^{(k)}$  for particular instantiations of $\mathcal{F}$ and $k.$ In this direction,  $\{K_{2}\}^{(k)}$ has been identified for $k\in\{1,\ldots,5\}$ in~\cite{CattellDDFL00onco}, for $k=6$ in~\cite{DinneenX02mino}, and for $k=7$ in~\cite{DinneenV12obst}, while the graphs in $\{K_{3}\}^{(i)}$ have been identified in~\cite{DinneenCF01forb} for $i\in\{1,2\}.$
In~\cite{Ding2016}, Ding and Dziobiak identified  the 57 graphs in $\{K_{4},K_{2,3}\}^{(1)},$ i.e., the obstruction
set for apex-outerplanar graphs, and the 25 graphs in $\{K_{4}^{-}\}^{(1)},$   i.e., the obstruction
set for apex-cactus graphs (as announced in~\cite{DziobiakD2013}).
Recently,  the 29 obstructions for 1-apex sub-unicyclic graphs and the 33 obstructions for 1-apex pseudoforests have been identified   in~\cite{LeivaditisSSTTV20mino}
and~\cite{LeivaditisSSTT2019mino}, respectively.

A landmark problem that  attracted
particular attention (see e.g.,~\cite{LiptonMMPRT16sixv,Mattman16forb,Yu06more}) is the one of identifying $\{K_{5},K_{3,3}\}^{(1)},$ i.e., characterizing 1-apex planar graphs.  In this direction, Mattman and Pierce conjectured that
$\{K_{5},K_{3,3}\}^{(n)}$ contains the $Y\Delta Y$-families of $K_{n+5}$ and $K_{3^2,2^n}$ and provided evidence towards this in~\cite{MattmanP16thea}.
Recently,  Jobson and Kézdy  identified \textsl{all} graphs  in $\{K_{5},K_{3,3}\}^{(1)}$ of {connectivity} two in \cite{JobsonK21allm}, where they also reported that $|\{K_{5},K_{3,3}\}^{(1)}|\geq 401.$

\subsection{Our bounds} In this paper we
provide the first general combinatorial upper bound
on the order of the graphs in $\mathcal{F}^{(k)},$ as a function of $\mathcal{F}$ and $k.$ To specify the bound and its attributes, we define two constants depending on $\mathcal{F}.$ We set $s_{\mathcal{F}}$ as the maximum number of vertices of a graph in $\mathcal{F}.$
We also define $a_{\mathcal{F}}$ as the minimum apex number of a graph in $\mathcal{F},$ where the \emph{apex number} of a graph $G$ is the minimum $i$ such that
$G$ is $i$-apex planar, i.e., $a_{\mathcal{F}}= \min\{i\mid\mathcal{F}\cap \mathcal{A}_{i}(\mathbf{excl}(\{K_{5},K_{3,3}\})) \neq \emptyset\}.$
In~\autoref{label_administrando}, we define a third constant depending on $\mathcal{F}$, namely $\ell_\mathcal{F}$, that is the maximum detail of the graphs in $\mathcal{F}$, where the \emph{detail} of a graph is the maximum is the maximum between the size of its vertex and its edge set.
The constant $\ell_\mathcal{F}$ is redundant for the presentation of the results and the proof outline in the rest of this section.

There are several graph classes where
$a_{\mathcal{F}}=1$ such as single-crossing minor-free graph classes \cite{RobertsonS93} and surface-embeddable graphs.
Finally, $a_{\mathcal{F}}=0$ if and only if  $\mathbf{excl}(\mathcal{F})$ has bounded treewidth, because of  the grid exclusion theorem~\cite{RobertsonS86GMV}.
\smallskip

The  main contribution of this paper is the following\footnote{In this paper we adopt the notation $f(\alpha,\beta)=\mathcal{O}_{\alpha}(\beta)$ (resp. $f(\alpha,\beta)=\Omega_{\alpha}(\beta)$) in order to denote that $f(\alpha,\beta)$ is upper-bounded (resp. lower-bounded) by the product of a function of $\alpha$ and a linear function of $β.$}.

\begin{theorem}\label{label_miseryfrightened}
There exists a function $\newfun{label_circunscriben}: \mathbb{N}^3\to\mathbb{N}$
such that if $\mathcal{F}$ is a finite collection of
graphs and $k\in\mathbb{N},$ then every graph
in $\mathcal{F}^{(k)}=\mathbf{obs}(\mathcal{A}_k (\mathbf{excl}(\mathcal{F})),$
has at most $\funref{label_circunscriben}(a_{\mathcal{F}},s_{\mathcal{F}},k)$ vertices.\medskip

\noindent Moreover, if $a=a_{\mathcal{F}}\geq 1$ and $s=s_{\mathcal{F}},$ then
$\funref{label_circunscriben}(a,s,k)=2^{2^{2^{ \log k \cdot 2^{\mathcal{O}_s (k^{a -1})}} }}.$
In particular,  $\funref{label_circunscriben}(a,s,k)= 2^{2^{q\cdot 2^{\log(c \cdot k) \cdot 2^{k^{a -1}\cdot 2^{{\mathcal{O}(s^2\log s)}}}}}},$
where $q:=q(s^2)=2^{2^{2^{c^{2^{2^{\mathcal{O}(s^2\log s)}}}}}},$ and $c:=f_{\mathsf{ul}} (s^2).$
\end{theorem}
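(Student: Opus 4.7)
The plan is to argue by contradiction via the irrelevant vertex technique. Fix a minor-minimal $H \in {\cal F}^{(k)}$, so $H$ is not a $k$-apex of $\excl({\cal F})$, while every proper minor of $H$ is. The goal is to show that if $|V(H)|$ exceeds the claimed tower, then some vertex $v \in V(H)$ is \emph{irrelevant}, i.e., $H - v \notin {\cal A}_k(\excl({\cal F}))$, contradicting minor-minimality. The bound will therefore be whatever size we must guarantee in order to extract a structural witness (a sufficiently large and well-protected flat wall) that admits such an irrelevant vertex.

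The first step is to bound $\tw(H)$. For every vertex $v$ of $H$, there is an apex set $S_v$ of size at most $k$ with $H - v - S_v \in \excl({\cal F})$. Since ${\cal F}$ contains an $a_{\cal F}$-apex graph, the graphs in $\excl({\cal F})$ exclude a fixed $a_{\cal F}$-apex minor and hence, by the apex-minor variant of the grid exclusion theorem, have treewidth $2^{{\cal O}_s(k^{a_{\cal F}-1})}$ when they arise in our context. Re-adding $S_v$ and $v$ yields a treewidth bound for $H$ of the same order, giving the innermost exponent in the stated tower.

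Once treewidth is controlled, if $|V(H)|$ is much larger than $\tw(H)$ we can extract a huge wall minor in $H$. To turn it into a structural tool, apply a quantitative version of the Flat Wall Theorem: either $H$ contains a $K_t$-minor for $t$ comparable to the apex number of ${\cal F}$ (which, together with $|V({\cal F})|\le s_{\cal F}$, yields an immediate and much smaller bound, handled separately), or $H$ contains a flat wall $W$ of prescribed size avoiding a bounded apex set $A$. The chain ``treewidth bound $\Rightarrow$ huge wall $\Rightarrow$ huge flat wall avoiding few apices'' accounts for the successive exponentials in $\funref{label_circunscriben}$: one for wall extraction, one for the flat-wall reduction, one for the irrelevant-vertex protection, and the base $2^{{\cal O}_s(k^{a-1})}$ from the treewidth bound.

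The main technical step — and the expected obstacle — is the irrelevant vertex argument itself: given a flat wall $W$ in $H$ avoiding a small apex set, show that any vertex $v$ deep inside the flat part of $W$ is irrelevant to the property ``being a $k$-apex of $\excl({\cal F})$''. This will require showing that every hypothetical apex set $S$ of size $\le k$ for $H$ (which does not exist by assumption) would have bounded interaction with $W$, and that the flat structure allows one to reroute any $\excl({\cal F})$-witness model away from $v$. Formally this uses the planarity of the flat part, a routing/linkage argument across subwalls, and the fact that ${\cal F}$ is non-planar (so no member of ${\cal F}$ can be routed through the flat interior). Quantifying ``deep enough'' produces the $\log k \cdot 2^{{\cal O}_s(k^{a-1})}$ factor in the third exponent, and iterating over the potentially large but bounded number of apex vertices of $W$ introduces the two outer exponentials; the constants $q$ and $c = f_{\sf ul}(s^2)$ collect the combinatorial parameters coming from the flat-wall and universal-obstruction constants in these routing arguments.
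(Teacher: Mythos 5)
Your first step---bounding $\tw(H)$---rests on a claim that is false. Excluding a finite family of non-planar graphs does not bound the treewidth of $\excl({\cal F})$ at all: for instance $\excl(\{K_5,K_{3,3}\})$ is the class of planar graphs, which contains all grids. The ``apex-minor variant of the grid exclusion theorem'' only relates treewidth to grid size inside apex-minor-free graphs; it yields no bound of the form $2^{{\cal O}_s(k^{a_{\cal F}-1})}$, and nothing in your argument explains how $k$ would enter (``when they arise in our context'' is not a derivation). In the paper, bounding the treewidth of an obstruction is precisely the hard part: one assumes large treewidth, applies the Flat Wall Theorem to $G\setminus R$ where $R$ is a hitting set of size $k+1$, extracts a subwall that is homogeneous with respect to all subsets of size $a_{\cal F}-1$ of the ``busy'' apex vertices (\autoref{label_transportado}), proves via the canonical-partition and panchromatic-contraction machinery (\autoref{label_significativos}) that every hypothetical apex set of $G\setminus v$ must hit all but at most $a_{\cal F}-1$ of those apices, and only then applies the homogeneity-based irrelevant-vertex result of \cite{BasteST20acom} (\autoref{label_panlatinismo}) to contradict $G\in\obs({\cal A}_k(\excl({\cal F})))$. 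The factor $2^{{\cal O}_s(k^{a-1})}$ in the bound comes from the number of such apex subsets needed for homogeneity, not from any grid-exclusion statement about $\excl({\cal F})$. Your sketch gestures at an irrelevant-vertex step but supplies none of these ingredients; in particular, the assertion that any apex set ``has bounded interaction with $W$'' and that witnesses can be rerouted is exactly what fails without the homogeneity and hitting arguments, since an apex set of $H-v$ may attach to the wall arbitrarily.

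Even granting a treewidth bound, your route to a size bound collapses. ``If $|V(H)|$ is much larger than $\tw(H)$ we can extract a huge wall minor'' is backwards: large walls certify large treewidth, not large order (a long path is arbitrarily large, has treewidth $1$, and contains no wall), and after your first step the treewidth is by assumption small, so no wall---and hence no irrelevant vertex of this kind---can be found. The paper needs an entirely separate mechanism to turn bounded treewidth into bounded size (\autoref{label_substanceless}): a linked, binary tree decomposition (\autoref{label_consecuencia}), boundaried graphs with a bounded set of $\equiv_h$-representatives (\autoref{label_desenfadadamente}), the $(k,h)$-characteristic functions and their monotonicity along root-to-leaf paths (\autoref{label_substantiality}, \autoref{label_rechtfertigen}), and a replacement argument using the disjoint paths guaranteed by linkedness to produce a strictly smaller minor of $G$ that still witnesses $G\in{\cal A}_k(\excl({\cal F}))$, contradicting obstruction-minimality. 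Nothing in your proposal plays this role, so even with a correct treewidth bound you obtain no bound on $|V(H)|$, which is the statement to be proved.
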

\smallskip

\noindent In the above theorem. $f_{\mathsf{ul}}$ is the bounding function of the Unique Linkage Theorem from~\cite{KawarabayashiW10asho} (see also \cite{RobertsonS09XXI,RobertsonSGM22}).  We stress the function  $f_{\mathsf{ul}}$ is introduced twice in our proofs, namely in  \autoref{label_panlatinismo} and  \autoref{label_desenfadadamente}.\medskip

Notice that the general bound of \autoref{label_miseryfrightened} is 4-fold exponential in some polynomial of $k.$
It is worth to observe that
in the case where that  $\mathcal{F}$ contains some apex  graph (or equivalently, $a_{\mathcal{F}}=1$) this bound becomes   $2^{2^{q\cdot  k^{{c'}}}},$
where ${c'}:=2^{2^{\mathcal{O}(s^2 \log s)}},$ i.e.,  is double-exponential in some polynomial of $k,$ whose degree depends on $s.$
For the case where  $a_{\mathcal{F}}=0$ (that is, when $\mathcal{F}$ contains only planar graphs), as we already mentioned, there exists a better bound than the one of \autoref{label_miseryfrightened}, i.e., the obstructions in $\mathcal{F}^{(k)}$
have order polynomial in $k$~\cite{FominLMS12plan}, with degree depending on $s$ as well.
\medskip

\autoref{label_miseryfrightened} implies that one can construct an algorithm that receives as input $\mathcal{F}$ and $k$ and outputs the obstruction set $\mathcal{F}^{(k)}.$ This is done by enumerating all graphs of at most $\funref{label_circunscriben}(a,s,k)$
vertices and filtering out those that are not minor-minimal members of $\mathcal{A}_{k}({\mathbf{excl}}(\mathcal{F})).$ The running time  of this algorithm can be bounded by  $f'(k,s)=2^{2^{2^{2^{2^{k^{\mathcal{O}_{s}(1)}}}}}}.$

\subsection{Proof outline}
Our proof has two parts and, in both of them, the protagonist  is the graph parameter \textsl{treewidth}.
Treewidth is a cornerstone parameter in both structural and algorithmic graph theory and, roughly speaking, can be seen
as a measure of the topological
resemblance of a graph to the structure of a tree (see \autoref{label_dispensaries} for the formal definition).
Our first aim is to bound the treewidth of the graphs in  $\mathcal{F}^{(k)}$ by a function of $k$ and, in the second step,
we use this bound in order to bound the order of the graphs in $\mathcal{F}^{(k)}.$ This two-stage approach
is not new. It dates back to the celebrated \textsl{irrelevant vertex technique}, introduced in~\cite{RobertsonS95XIII}
for the design of a polynomial-time algorithm for the \textsc{Disjoint Paths} problem.
The same technique  has been  used  in~\cite{AdlerGK08comp}
for computing obstruction sets (see also~\cite[Section  7.9.1]{DowneyF99para} and~\cite{CattellDDFL00onco})
and   for the design
of parameterized algorithms recognizing $k$-apices of certain minor-closed graph classes \cite{MarxS07obta,KociumakaP19dele,JansenLS14anea,Kawarabayashi09plan,SauST20anfp,SauST21kapiII}.
In the rest of this subsection we outline how this technique is applied in order to obtain the bounds in~\autoref{label_miseryfrightened}.

\paragraph{The Flat Wall Theorem.}
The main combinatorial tool for our proof is the  Flat Wall Theorem.
This theorem was proved by Robertson and Seymour in~\cite{RobertsonS95XIII} and served as  the combinatorial base for the application (and also the invention) of the \textsl{irrelevant vertex technique}.
In a nutshell, this theorem asserts that every graph in $\mathbf{excl}(\mathcal{F})$
that has ``big enough'' treewidth  contains a vertex set $A,$ whose size depends
on $\mathcal{F},$ such that $G\setminus A$ contains some ``flat wall''.
Intuitively, a flat wall $W$ is contained in a larger subgraph of $G\setminus A,$ its \textsl{compass},
that  is separated from the rest of  $G\setminus A$  via a separator $S$ that is a ``suitably chosen'' part of  the
``bordering cycle'' of $W$ and is arranged in a ``flat way'' inside this cycle.

To deal with flat walls, we use the combinatorial framework
recently introduced in~\cite{SauST21amor} that, in turn, is based on the improved version of the Flat Wall Theorem
proved by
Kawarabayashi, Thomas, and Wollan in~\cite{KawarabayashiTW18anew} (see also~\cite{Chuzhoy15impr,GiannopoulouT13}).
This framework is presented in \autoref{label_dispensaries} and provides the formal definitions of a series of
combinatorial concepts such as paintings and renditions (\autoref{label_inappropriate}), flatness pairs and  tilts (\autoref{label_exceptionalness}), as well
as a notion of wall homogeneity (\autoref{label_backwardness})
alternative to the one given in~\cite{RobertsonS95XIII}.
All these concepts are extensively used in our proofs.

\paragraph{Bounding the treewidth of the obstructions.}
Given a graph $G,$
we call a set $S\subseteq V(G)$ \emph{$\mathcal{F}$-hitting set} of
$G$ if $G\setminus S\in\mathbf{excl}(\mathcal{F}).$

Our proof for bounding the treewidth of the graphs in $\mathcal{F}^{(k)}$
departs from the fact that
every obstruction $G\in\mathcal{F}^{(k)}$ contains an $\mathcal{F}$-hitting set $R$
with $k+1$ vertices. We assume, towards a contradiction, that $G$ has treewidth $\Omega_{s}(k^{2^{\Omega_s(k^{a -1})}}),$ where $s=s_{\mathcal{F}}$ and $a=a_{\mathcal{F}}.$

We next apply the Flat Wall Theorem (as stated in~\autoref{label_aldobrandesco}) on $G\setminus R$
and, after removing an additional set $A,$ we know that $G\setminus (R\cup A),$
contains a flat wall $W$
of height $k^{2^{\Omega_s(k^{a -1})}},$ while  $|R\cup A|\leq k+a-4.$
We then
consider the set $Q$ consisting of the vertices of $R\cup A$ that have
neighbors  in $\Omega_{s}(k^3)$  elements of  some so-called ``canonical partition'' of $G\setminus (R\cup A),$ introduced in~\autoref{label_villebrequin},
into ``zones of influence'' around the vertices of $W.$ Intuitively, this partition  is defined so to respect the ``bidimensional'' structure of $W.$

We next   identify inside the compass $K$ of $W$
some  flat wall $W'$ of height $\Omega_{s}(k)$ with compass $K'$ such that
\begin{itemize}
	\item[(i)]  none of the vertices in $(R\cup A)\setminus Q$ has any neighbor inside  $K',$ and
	\item[(ii)] $W'$ is ``homogeneous with respect to all subsets of $Q$ of size $a-1$''. Here we use the generalized notion of  homogeneity   that  is defined in~\cite{SauST21amor} and used in~\cite{BasteST20acom}.
\end{itemize}

The existence of a wall $W'$ as above  is supported by \autoref{label_anticipation}, \autoref{label_overestimation},
and finally  \autoref{lemma_bidim_branch},
whose proof occupies the whole~\autoref{label_nominalistic}.

We stress at this point that the price we pay for obtaining  a homogeneous  wall of height  $\Omega_{s}(k)$
was  to demand  that $W$ has height $k^{2^{\Omega_s(k^{a -1})}},$ where the  term $\Omega_s(k^{a -1})$ comes from the  number of subsets of $Q$ of size $a-1$ and the fact that, in the worst case, $Q=R\cup A.$
This is the source of the double-exponentiality in $k$ of the bound for treewidth (which becomes polynomial in $k$ in case $a=0$).

Let now $G^-=G\setminus v$ where $v$ is a ``central vertex'' of $W'.$
As $G\in{\mathcal{F}}^{(k)},$ $G^-$ has an $\mathcal{F}$-hitting set $S$ with $|S|\leq k.$
Our next step is to prove that any such $S$ must intersect all but at most $a-1$ vertices of $Q.$ Then using (i), (ii), and the main combinatorial
result of~\cite{BasteST20acom} (\autoref{label_panlatinismo}), we prove (\autoref{label_acquaintances})  that  $S$ is also an $\mathcal{F}$-hitting set of $G,$ a contradiction to the fact that $G\in{\mathcal{F}}^{(k)}$ (see the proof of \autoref{label_transfiguration}).

\paragraph{Bounding the order of the obstructions.}
Given that the graphs in $\mathcal{F}^{(k)}$ have treewidth bounded by a function that  is double-exponential in $k,$
the next step is to bound their order.
For this, we apply the technique
introduced by Lagergren in~\cite{Lagergren98} (see also~\cite{Lagergren91anup,LagergrenA91mini}).
This technique has been used in order to bound the order of obstructions for several width parameters:  minor obstructions for treewidth and pathwidth in~\cite{Lagergren98}, immersion obstructions for cutwidth in~\cite{GiannopoulouPRT19cutw} and   tree-cutwidth in~\cite{GiannopoulouKRT19lean}, and vertex-minor obstructions for linear rankwidth in~\cite{KanteKKO23obst,KanteK18line}. Also, similar ideas were used for the identification of immersion obstructions in~\cite{GiannopoulouPRT17line}. \smallskip

In our case, we consider in \autoref{label_agathyrsians} a graph $G\in\mathcal{F}^{(k)}$ and we  assume that its treewidth is upper-bounded by $t=t(k,s).$  Then we consider a special
type of  tree decomposition defined in~\cite{RobertsonS86GMV,Thomas90amen}, called \emph{linked decomposition}   (see also~\cite{BellenbaumD02twos,Erde18auni}). We also assume that this decomposition is  ``binary'',
in the sense that the associated tree $T$ rooted at each of its nodes has at most two children. We then consider, for each bag
$X_{i}$ of the decomposition, the graph
that is ``dangling'' below $X_{i}$ and
see it as a ``boundaried'' graph $\mathbf{G}_{i},$ whose boundary   is the set $X_{i}.$
An important part of the proof
is to assign to each
boundaried graph $\mathbf{G}_{i}$
a ``set of characteristics'', expressing all ways
$\mathcal{F}$-hitting sets of size at most $k$ may intersect partial minor models of the graphs in $\mathcal{F}$  inside $\mathbf{G}_{i}$ (see \autoref{label_profesionales}). These characteristics are defined using the algorithmic results of~\cite{BasteST20hittI,BasteST20acom}
and play a role similar to that of defining a ``finite congruence'' in~\cite{LagergrenA91mini}.
Using the combinatorial  bounds of~\cite{BasteST20acom}, we prove that they are no more than $2^{\mathcal{O}_{s}(t\log t)}$ different characteristics.
Also, in \autoref{label_profesionales} we introduce an ordering between such characteristics and show that if $X_{i}$ and $X_{j}$ are nodes in the same path from the root to some leaf of $T,$ then their corresponding characteristics are comparable with respect to this ordering (\autoref{label_substantiality}). Next we use the fact that $G$ is an obstruction to prove that
the characteristics should be properly ordered along such paths (see the proof of \autoref{label_substanceless}).
This implies that each path of $T$ has  $2^{\mathcal{O}_{s}(t\log t)}$ nodes which, in turn, yields that
the binary tree $T$ has $2^{2^{\mathcal{O}_{s}(t\log t)}}$ nodes in total. This bound on the size of the tree of the tree decomposition implies that the same bound holds for the order of $G$ as well. \medskip

Given the above discussion, we conclude that the order of the graphs in $\mathcal{F}^{(k)}$ is double-exponential in their treewidth and that their treewidth is double-exponential in $k.$ These altogether yield the claimed 4-fold exponential bound.

\section{Preliminaries}\label{label_dispensaries}

In this section we present some basic definitions
as well as the combinatorial framework of \cite{SauST21amor} concerning flat walls.
Namely, in \autoref{label_administrando} we give some basic definitions on graphs and
in \autoref{label_domesticated} we define walls and some basic notions concerning them.
In \autoref{label_inappropriate} we present the notions of paintings and renditions, which we use in \autoref{label_exceptionalness} in order to define flat walls, flatness pairs, and their tilts.
In \autoref{label_backwardness} we define the notion of homogeneous flatness pairs.

\subsection{Basic definitions}\label{label_administrando}

\paragraph{Sets and integers.}\label{label_unverallgemeinerten}
We denote by $\mathbb{N}$ the set of non-negative integers.
Given two integers $p$ and $q,$ the set $[p,q]$ refers to the set of every integer $r$ such that $p\leq r\leq q.$
For an integer $p\geq 1,$ we set $[p]=[1,p]$ and $\mathbb{N}_{\geq p}=\mathbb{N}\setminus [0,p-1].$
Given a non-negative integer $x,$
we denote by $\mathsf{odd}(x)$ the minimum odd number that is not smaller than $x.$
For a set $S,$ we denote by $2^{S}$ the set of all subsets of $S$ and, given an integer $r\in[|S|],$
we denote by $\binom{S}{r}$ the set of all subsets of $S$ of size $r$ and by $\binom{S}{\leq r}$ the set of all subsets of $S$ of size at most $r.$
If ${\mathcal{S}}$ is a collection of objects where the operation $\cup$ is defined,
then we denote $\cupall\mathcal{S}=\bigcup_{X\in\mathcal{S}}X.$
Given two sets $A,B$ and a function $f: A\to B,$
for a subset $X\subseteq A$ we use $f(X)$ to denote the set $\{f(x)\mid x\in X\}.$

\paragraph{Basic concepts on graphs.}\label{label_ricominciavan}
All graphs considered in this paper are undirected, finite, and without loops or multiple edges.
We use standard graph-theoretic notation and we refer the reader to
\cite{Diestel10grap} for any undefined terminology.
Let $G$ be a graph.
We call $|V(G)|$ the \emph{order} of $G$.
Also, we say that a pair $(L,R)\in 2^{V(G)}\times 2^{V(G)}$ is a \emph{separation} of $G$
if $L\neq R$, $L\cup R=V(G)$, and there is no edge in $G$ between $L\setminus R$ and $R\setminus L.$
Given a vertex $v\in V(G),$ we denote by $N_{G}(v)$ the set of vertices of $G$ that are adjacent to $v$ in $G.$
Also, given a set $S\subseteq V(G),$ we set $N_G (S) = \bigcup_{v\in S} N_G (v).$
For $S \subseteq V(G),$ we set $G[S]=(S,E\cap{\binom{S}{2}} )$ and use the shortcut $G \setminus S$ to denote $G[V(G) \setminus S].$
Given a graph $G,$ we define the \emph{detail} of $G,$ denoted by $\mathsf{detail}(G),$ to be the maximum among $|E(G)|$ and $|V(G)|.$
Given a finite collection $\mathcal{F}$ of graphs, we set $\ell_{\mathcal{F}}= \max\{\mathsf{detail}(H)\mid H\in\mathcal{F}\}.$
Given a vertex $v\in V(G)$ of degree two with neighbors $u$ and $w,$ we define the \emph{dissolution} of $v$
to be the operation of deleting $v$ and, if $u$ and $w$ are not adjacent, adding the edge $\{u,w\}.$
Given two graphs $H,G,$ we say that $H$ is a \emph{dissolution} of $G$
if a graph isomorphic to $H$ can be obtained from $G$ after dissolving vertices of $G.$
Given an edge $e=\{u,v\}\in E(G),$ we define the \emph{subdivision} of $e$
to be the operation of deleting $e,$ adding a new vertex $w$ and making it adjacent to $u$ and $v.$
Given two graphs $H,G,$ we say that $H$ is a \emph{subdivision} of $G$
if a graph isomorphic to $H$ can be obtained from $G$ after possibly subdividing edges of $G.$

\paragraph{Treewidth.}
A \emph{tree decomposition} of a graph~$G$
is a pair~$(T,\chi)$ where $T$ is a tree and $\chi: V(T)\to 2^{V(G)}$
such that
\begin{itemize}
	\item $\bigcup_{t \in V(T)} \chi(t) = V(G),$
	\item for every edge~$e$ of~$G$ there is a $t\in V(T)$ such that
	      $\chi(t)$
	      contains both endpoints of~$e,$ and
	\item for every~$v \in V(G),$ the subgraph of~${T}$
	      induced by $\{t \in V(T)\mid {v \in \chi(t)}\}$ is connected.
\end{itemize}
The \emph{width} of $(T,\chi)$ is equal to $\max\big\{\left|\chi(t)\right|-1 \bigmid t\in V(T)\big\}$ and the \emph{treewidth} of $G$ is the minimum width over all tree decompositions of $G.$

\paragraph{Contractions and minors.}\label{label_senzillament}
The \emph{contraction} of an edge $e = \{u,v\}$ of a simple graph $G$ results in a simple graph $G'$
obtained from $G \setminus \{u,v\}$ by adding a new vertex $uv$ adjacent to all the vertices
in the set $N_G(u) \cup N_G(v)\setminus \{u,v\}.$
A graph $H$ is a \emph{minor} of a graph $G,$ denoted by $H\prem G,$
if we can obtain from $G$ a graph $G'$
by a sequence of vertex removals, edge removals, and edge contractions
such that $H$ is isomorphic to $G'$.
If only edge contractions are allowed, we say that $H$ is a \emph{contraction} of $G.$
Given two graphs $H,G,$ if $H$ is a minor of $G$ then for every vertex $v\in V(H)$ there is
a set of vertices in $G$ that are the endpoints of the edges of $G$ contracted towards creating
the vertex $\rho(v)$ of $V(G')$, where $\rho$ is an isomorphism from $H$ to $G'$.
We call this set the \emph{model} of $v$ in $G.$
Given a finite collection of graphs $\mathcal{F}$ and a graph $G,$ we use the notation $\mathcal{F}\prem G$ to denote that some graph in $\mathcal{F}$ is a minor of $G.$

\subsection{Walls and subwalls}\label{label_domesticated}

\paragraph{Walls.}\label{label_mistreatment}
Let  $k,r\in\mathbb{N}.$ The
\emph{$(k\times r)$-grid} is the
graph whose vertex set is $[k]\times[r]$ and two vertices $(i,j)$ and $(i',j')$ are adjacent if and only if $|i-i'|+|j-j'|=1.$
In the rest of this paper, we always assume that each vertex $(i,j)\in[k]\times [r]$ of a $(k\times r)$-grid is embedded at the point $(i,j)$ in a coordinate system whose horizontal axis refers to the first coordinate, whose vertical axis refers to the second coordinate, and each edge of the grid is represented by a straight line segment.
An  \emph{elementary $r$-wall}, for some odd integer $r\geq 3,$ is the graph obtained from a
$(2 r\times r)$-grid
with vertices $(x,y)
	\in[2r]\times[r],$
after the removal of the
``vertical'' edges $\{(x,y),(x,y+1)\}$ for odd $x+y,$ and then the removal of
all vertices of degree one.
This definition is slightly different than other definitions in the literature
(i.e., we require $r$ to be odd), but we adopt this one for technical reasons.
Notice that, as $r\geq 3,$  an elementary $r$-wall is a planar graph
that has a unique (up to topological isomorphism) embedding in the plane $\mathbb{R}^{2}$
such that all its finite faces are incident to exactly six
edges.
The \emph{perimeter} of an elementary $r$-wall is the cycle bounding its infinite face,
while the cycles bounding its finite faces are called \emph{bricks}.
Also, the vertices
in the perimeter of an elementary $r$-wall that have degree two are called \emph{pegs},
while the vertices $(1,1), (2,r), (2r-1,1), (2r,r)$ are called \emph{corners} (notice that the corners are also pegs).

An \emph{$r$-wall} is any graph $W$ obtained from an elementary $r$-wall $\bar{W}$
after subdividing edges (see \autoref{label_determinable}). A graph $W$ is a \emph{wall} if it is an $r$-wall for some odd $r\geq 3$
and we refer to $r$ as the \emph{height} of $W.$ Given a graph $G,$
a \emph{wall of} $G$ is a subgraph of $G$ that is a wall.
We insist that, for every $r$-wall, the number $r$ is always odd.

We call the vertices of degree three of a wall $W$ \emph{3-branch vertices}.
A cycle of $W$ is a \emph{brick} (resp. the \emph{perimeter}) of $W$
if its 3-branch vertices are the vertices of a brick (resp. the perimeter) of $\bar{W}.$
We denote by ${\mathcal{C}}(W)$ the set of all cycles of $W.$
We  use $D(W)$ in order to denote the perimeter of the  wall $W.$
A brick of $W$ is \emph{internal} if it is disjoint from $D(W).$

\begin{figure}[ht]
	\begin{center}
		\includegraphics[width=11cm]{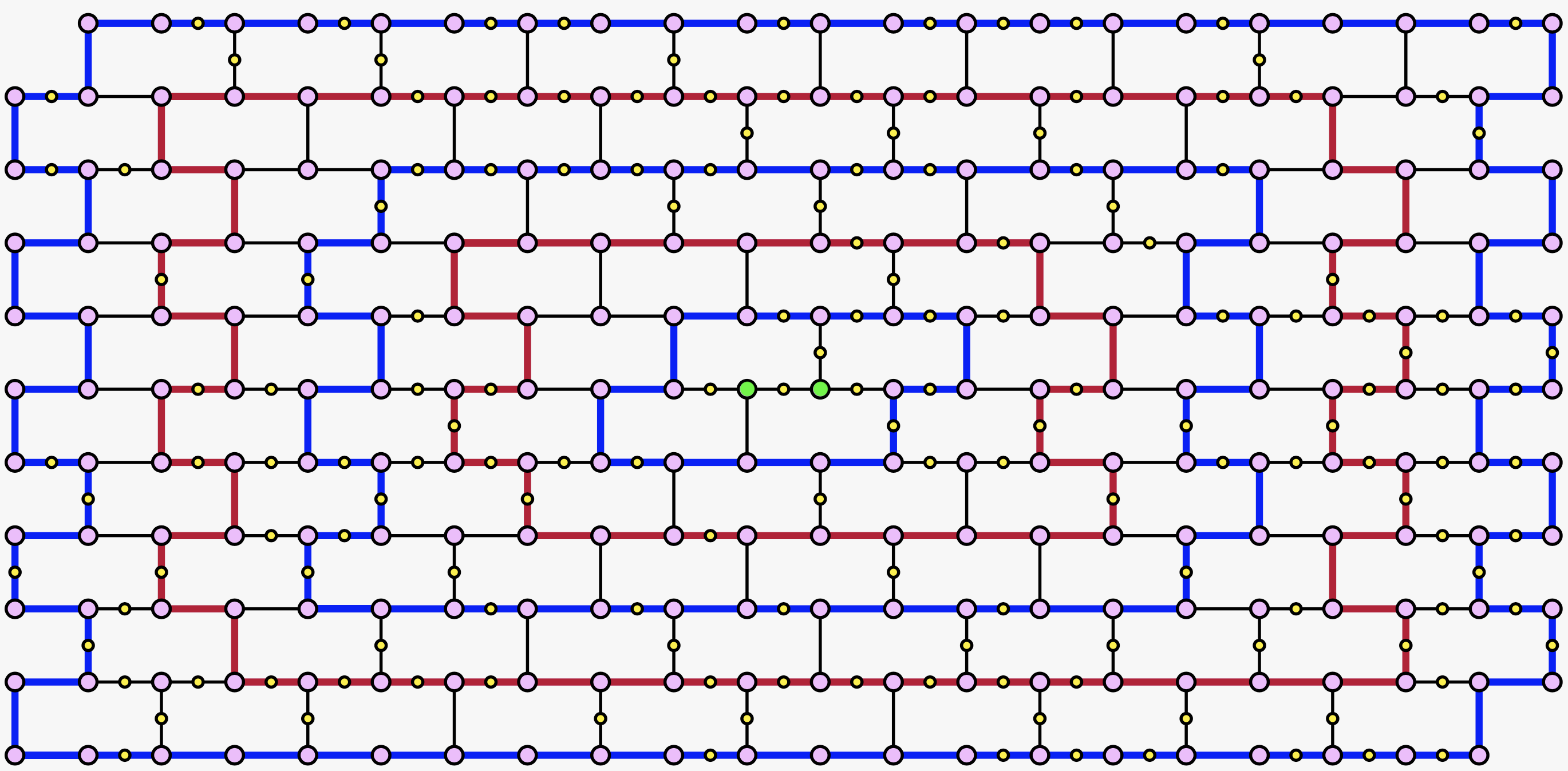}
	\end{center}
	\caption{An $11$-wall and its five layers, depicted in alternating red and blue. The central vertices of the wall are depicted in green.}
	\label{label_determinable}
\end{figure}

\paragraph{Subwalls.}
Given an elementary $r$-wall $\bar{W},$ some odd $i\in \{1,3,\ldots,2r-1\},$ and $i'=(i+1)/2,$
the \emph{$i'$-th  vertical path} of $\bar{W}$  is the one whose
vertices, in order of appearance, are $(i,1),(i,2),(i+1,2),(i+1,3),
	(i,3),(i,4),(i+1,4),(i+1,5),
	(i,5),\ldots,(i,r-2),(i,r-1),(i+1,r-1),(i+1,r).$
Also, given some $j\in[2,r-1]$ the \emph{$j$-th horizontal path} of $\bar{W}$
is the one whose
vertices, in order of appearance, are $(1,j),(2,j),\ldots,(2r,j).$

A \emph{vertical} (resp. \emph{horizontal}) path of $W$ is one
that is a subdivision of a  vertical (resp. horizontal) path of $\bar{W}.$
Notice that the perimeter of an $r$-wall $W$
is uniquely defined regardless of the choice of the elementary $r$-wall $\bar{W}.$
A \emph{subwall} of $W$ is any subgraph $W'$ of  $W$
that is an $r'$-wall, with $r' \leq r,$ and such the vertical (resp. horizontal) paths of $W'$ are subpaths of the
	{vertical} (resp. {horizontal}) paths of $W.$

\paragraph{Layers.}
The \emph{layers} of an $r$-wall $W$  are recursively defined as follows.
The first layer of $W$ is its perimeter.
For $i=2,\ldots,(r-1)/2,$ the $i$-th layer of $W$ is the $(i-1)$-th layer of the subwall $W'$
obtained from $W$ after removing from $W$ its perimeter and
removing recursively all consequent vertices of degree one.
We refer to the $(r-1)/2$-th layer as the \emph{inner layer} of $W.$
The \emph{central vertices} of an $r$-wall are its two branch vertices  that do not belong to any of its layers and that are connected by a path of $W$ that does not intersect any layer.
See \autoref{label_determinable} for an illustration of the notions defined above.

\paragraph{Central walls.}
Given an $r$-wall $W$ and an odd $q\in\mathbb{N}_{\geq 3}$ where $q\leq r,$
we define the \emph{central $q$-subwall} of $W,$ denoted by $W^{(q)},$
to be the $q$-wall obtained from $W$ after removing
its first $(r-q)/2$ layers and all consequent vertices of degree one.

\paragraph{Tilts.}
The \emph{interior} of a wall $W$ is the graph obtained
from $W$ if we remove from it all edges of $D(W)$ and all vertices
of $D(W)$ that have degree two in $W.$
Given two walls $W$ and $\tilde{W}$ of a graph $G,$
we say that $\tilde{W}$ is a \emph{tilt} of $W$ if $\tilde{W}$ and $W$ have identical interiors.

\subsection{Paintings and renditions}
\label{label_inappropriate}
In this subsection we present the notions of renditions and paintings, originating in the work of Robertson and Seymour \cite{RobertsonS95XIII}.
The definitions presented here were introduced by Kawarabayashi et al. \cite{KawarabayashiTW18anew} (see also~\cite{SauST21amor}).
\paragraph{Paintings.}
A \emph{closed} (resp. \emph{open}) \emph{disk} is a set homeomorphic to the set
$\{(x,y)\in \mathbb{R}^{2}\mid x^{2}+y^{2}\leq 1\}$ (resp. $\{(x,y)\in \mathbb{R}^{2}\mid x^{2}+y^{2}< 1\}$).
Let $\Delta$ be a closed disk.
Given a subset $X$ of $\Delta,$ we
denote its closure by $\bar{X}$ and its boundary by $\mathsf{bd}(X).$
A \emph{{$\Delta$}-painting} is a pair $\Gamma=(U,N)$
where
\begin{itemize}
	\item  $N$ is a finite set of points of $\Delta,$
	\item $N \subseteq U \subseteq \Delta,$ and
	\item $U \setminus  N$ has finitely many arcwise-connected  components, called \emph{cells}, where, for every cell $c,$
	      \begin{itemize}
		      \item[$\circ$] the closure $\bar{c}$ of $c$
		            is a closed disk
		            and
		      \item[$\circ$]  $|\tilde{c}|\leq 3,$ where $\tilde{c}:=\mathsf{bd}(c)\cap N.$
	      \end{itemize}
\end{itemize}
We use the  notation $U(\Gamma) := U,$
$N(\Gamma) := N$  and denote the set of cells of $\Gamma$
by $C(\Gamma).$
For convenience, we may assume that each cell  of $\Gamma$ is an open disk of $\Delta.$
Notice that, given a $\Delta$-painting $\Gamma,$
the pair $(N(\Gamma),\{\tilde{c}\mid c\in C(\Gamma)\})$  is a hypergraph whose hyperedges have cardinality at most three and  $\Gamma$ can be seen as a plane embedding of this hypergraph in $\Delta.$

\paragraph{Renditions.}
Let $G$ be a graph and let $\Omega$ be a cyclic permutation of a subset of $V(G)$ that we denote by $V(\Omega).$ By an \emph{$\Omega$-rendition} of $G$ we mean a triple $(\Gamma, \sigma, \pi),$ where
\begin{itemize}
	\item[(a)] $\Gamma$ is a $\Delta$-painting for some closed disk $\Delta,$
	\item[(b)] $\pi: N(\Gamma)\to V(G)$ is an injection, and
	\item[(c)] $\sigma$ assigns to each cell $c \in  C(\Gamma)$ a subgraph $\sigma(c)$ of $G,$ such that
	      \begin{enumerate}
		      \item[(1)] $G=\bigcup_{c\in C(\Gamma)}\sigma(c),$
		      \item[(2)]  for distinct $c, c' \in  C(\Gamma),$  $\sigma(c)$ and $\sigma(c')$  are edge-disjoint,
		      \item[(3)] for every cell $c \in  C(\Gamma),$ $\pi(\tilde{c}) \subseteq V (\sigma(c)),$
		      \item[(4)]  for every cell $c \in  C(\Gamma),$
		            $V(\sigma(c)) \cap \bigcup_{c' \in  C(\Gamma) \setminus  \{c\}}V(\sigma(c')) \subseteq \pi(\tilde{c}),$ and
		      \item[(5)]  $\pi(N(\Gamma)\cap \mathsf{bd}(\Delta))=V(\Omega),$ such that the points
		            in $N(\Gamma)\cap \mathsf{bd}(\Delta)$ appear in $\mathsf{bd}(\Delta)$ in the same ordering
		            as their images, via $\pi,$ in $\Omega.$
	      \end{enumerate}
\end{itemize}

\subsection{Flatness pairs}
\label{label_exceptionalness}
In this subsection we define the notion of a flat wall, originating in the work of Robertson and Seymour \cite{RobertsonS95XIII} and later used in \cite{KawarabayashiTW18anew}.
Here, we define flat walls as in \cite{SauST21amor}.

\paragraph{Flat walls.}
Let $G$ be a graph and let $W$ be an $r$-wall  of $G,$ for some odd integer $r\geq 3.$
We say that a pair $(P,C)\subseteq D(W)\times D(W)$ is a \emph{choice
		of pegs and corners for $W$} if $W$ is the subdivision of an  elementary $r$-wall $\bar{W}$
where $P$ and
$C$ are the pegs and the corners of $\bar{W},$ respectively (clearly, $C\subseteq P$).
To get more intuition, notice that a wall $W$ can occur in several ways from the elementary wall $\bar{W},$
depending on the way the edges in the perimeter of $\bar{W}$ are subdivided.
Each of them gives a different selection $(P,C)$ of pegs and corners of $W.$

We say that $W$ is a \emph{flat $r$-wall}
of $G$ if there is a separation $(X,Y)$ of $G$ and a choice  $(P,C)$
of pegs and corners for $W$ such that:
\begin{itemize}
	\item $V(W)\subseteq Y,$
	\item  $P\subseteq X\cap Y\subseteq V(D(W)),$ and
	\item  if $\Omega$ is the cyclic ordering of the vertices $X\cap Y$ as they appear in $D(W),$
	      then there exists an $\Omega$-rendition $(\Gamma,\sigma,\pi)$ of  $G[Y].$
\end{itemize}
We say that $W$ is a \emph{flat wall}
of $G$ if it is a flat $r$-wall for some odd integer $r \geq 3.$

\paragraph{Flatness pairs.}
Given the above, we  say that  the choice of the 7-tuple $\mathfrak{R}=(X,Y,P,C,\Gamma,\sigma,\pi)$
\emph{certifies that $W$ is a flat wall of $G$}.
We call the pair $(W,\mathfrak{R})$ a \emph{flatness pair} of $G$ and define
the \emph{height} of the pair $(W,\mathfrak{R})$ to be the height of $W.$
We use the term \emph{cell of} $\mathfrak{R}$ in order to refer to the cells of $\Gamma.$

We call the graph $G[Y]$ the \emph{$\mathfrak{R}$-compass} of $W$ in $G,$
denoted by $\mathsf{Compass}_{\mathfrak{R}}(W).$
It is easy to see that there is a connected component of $\mathsf{Compass}_{\mathfrak{R}}(W)$ that contains the wall $W$ as a subgraph.
We can assume that $\mathsf{Compass}_{\mathfrak{R}} (W)$ is connected, updating $\mathfrak{R}$ by removing from $Y$ the vertices of all the connected components of $\mathsf{Compass}_\mathfrak{R} (W)$
except of the one that contains $W$ and including them in $X$ ($\Gamma, \sigma, \pi$ can also be easily modified according to the removal of the aforementioned vertices from $Y$).
We define the  \emph{flaps} of the wall $W$ in $\mathfrak{R}$ as
$\mathsf{Flaps}_{\mathfrak{R}}(W):=\{\sigma(c)\mid c\in C(\Gamma)\}.$
Given a flap $F\in \mathsf{Flaps}_{\mathfrak{R}}(W),$ we define its \emph{base}
as $\partial F:=V(F)\cap \pi(N(\Gamma)).$
A  cell $c$ of ${\mathfrak{R}}$ is \emph{untidy} if  $\pi(\tilde{c})$ contains a vertex
$x$ of ${W}$ such that two of the edges of ${W}$ that are incident to $x$ are edges of $\sigma(c).$
Notice that if $c$ is untidy then  $|\tilde{c}|=3.$
A cell $c$ of $\mathfrak{R}$ is \emph{tidy} if it is not untidy.
The notion of tidy/untidy cell as well as the notions that
we present in the rest of this subsection have been introduced in~\cite{SauST21amor}.

\paragraph{Cell classification.}
Given a cycle $C$ of $\mathsf{Compass}_{\mathfrak{R}}(W),$ we say that
$C$ is \emph{$\mathfrak{R}$-normal} if it is \textsl{not} a subgraph of a flap $F\in \mathsf{Flaps}_{\mathfrak{R}}(W).$
Given an $\mathfrak{R}$-normal cycle $C$ of $\mathsf{Compass}_{\mathfrak{R}}(W),$
we call a cell $c$ of $\mathfrak{R}$ \emph{$C$-perimetric} if
$\sigma(c)$ contains some edge of $C.$
Since every $C$-perimetric cell $c$ contains some edge of $C$ and $|\partial\sigma(c)|\leq 3,$ we observe the following.
\begin{observation}\label{label_grundfalscher}
	For every pair $(C,C')$ of $\mathfrak{R}$-normal cycles of $\mathsf{Compass}_{\mathfrak{R}} (W)$ such that $V(C)\cap V(C')=\emptyset,$ there is no cell of $\mathfrak{R}$ that is both $C$-perimetric and $C'$-perimetric.
\end{observation}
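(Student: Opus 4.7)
The plan is to argue by contradiction: suppose some cell $c$ of $\frR$ is both $C$-perimetric and $C'$-perimetric, and derive a vertex shared by $C$ and $C'$. The core claim is that whenever a cell $c$ is $D$-perimetric for an $\frR$-normal cycle $D$, at least two vertices of $D$ lie in $\partial\sigma(c)=\pi(\tilde{c})$. Applying this to both $C$ and $C'$, and combining with $|\partial\sigma(c)|\leq |\tilde{c}|\leq 3$, a pigeonhole argument yields $V(C)\cap V(C')\cap \partial\sigma(c)\neq\emptyset$, contradicting $V(C)\cap V(C')=\emptyset$.

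To establish the core claim, I would traverse $D$ as a cyclic sequence $v_0,v_1,\ldots,v_{k-1},v_0$ of vertices, with edges $e_i=\{v_i,v_{i+1}\}$, and classify each edge according to whether it belongs to $E(\sigma(c))$. Since $c$ is $D$-perimetric, at least one $e_i$ lies in $E(\sigma(c))$. On the other hand, if \emph{every} edge of $D$ were in $E(\sigma(c))$, then because both endpoints of any edge of a flap lie in that flap, $D$ would be a subgraph of $\sigma(c)$, contradicting $\frR$-normality. Hence both categories are nonempty, so the cyclic traversal must switch between them an even, positive number of times, producing at least two distinct transition vertices.

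At each such transition vertex $v_i$, one of $e_{i-1},e_i$ lies in $\sigma(c)$ while the other lies in some different flap $\sigma(c')$ by the edge-disjointness of flaps (property (2) of renditions); property (4) of renditions then forces $v_i\in V(\sigma(c))\cap V(\sigma(c'))\subseteq \pi(\tilde{c})=\partial\sigma(c)$. This yields $|V(D)\cap \partial\sigma(c)|\geq 2$ and completes the argument. I do not anticipate any serious obstacle: the observation really says that two vertex-disjoint cycles cannot both ``cross'' a single flap whose base has at most three vertices, and once the bookkeeping above is in place the contradiction is immediate.
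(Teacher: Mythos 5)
Your proof is correct and follows essentially the same route the paper intends: a $C$-perimetric cell has at least two vertices of $C$ in $\pi(\tilde{c})$ (which you establish carefully via the transition argument along the cycle, using $\frR$-normality to get a nonempty complement of edges), so a cell that were both $C$-perimetric and $C'$-perimetric would force four distinct vertices of the two disjoint cycles into a set of size at most three. One tiny attribution slip: that the edge of $D$ outside $E(\sigma(c))$ lies in some \emph{other} flap follows from condition (1) of the rendition (every edge of the compass lies in some $\sigma(c')$, necessarily with $c'\neq c$), rather than from the edge-disjointness condition (2), but this does not affect the argument.
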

Notice that if $c$ is $C$-perimetric, then $\tilde{c}$ contains two points $p,q\in N(\Gamma)$
such that  $\pi(p)$ and $\pi(q)$ are vertices of $C$ where one,
say $P_{c}^\mathrm{in},$ of the two $(\pi(p),\pi(q))$-subpaths of $C$ is a subgraph of $\sigma(c)$ and the other,
denoted by $P_{c}^\mathrm{out},$  $(\pi(p),\pi(q))$-subpath contains at most one vertex of $\sigma(c)$ that is internal to $P_{c}^\mathrm{out}$ and
which, if it exists, must be the (unique) vertex $z$ in $\partial\sigma(c)\setminus\{\pi(p),\pi(q)\}.$
We pick a $(p,q)$-arc $A_{c}$ in $\hat{c}:={c}\cup\tilde{c}$ such that  $\pi^{-1}(z)\in A_{c}$ if and only if
$P_{c}^\mathrm{in}$ contains
the vertex $z$ as an internal vertex.

We consider the simple closed curve
$K_{C}=\cupall\{A_{c}\mid \mbox{$c$ is a $C$-perimetric cell of $\mathfrak{R}$}\}$
and we denote by $\Delta_{C}$ the closed disk bounded by $K_{C}$  that is contained in  $\Delta.$
A cell $c$ of $\mathfrak{R}$ is called \emph{$C$-internal} if $c\subseteq \Delta_{C}$
and is called \emph{$C$-external} if $\Delta_{C}\cap c=\emptyset.$
Notice that  the cells of $\mathfrak{R}$ are partitioned into  $C$-internal,  $C$-perimetric, and  $C$-external cells.

Let $c$ be a tidy $C$-perimetric cell of $\mathfrak{R}$ where $|\tilde{c}|=3.$ Notice that $c\setminus A_{c}$ has two arcwise-connected components and one of them is an open disk $D_{c}$ that is a subset of $\Delta_{C}.$
If the closure $\overline{D}_{c}$  of $D_{c}$ contains only two points of $\tilde{c}$ then we call the cell $c$ \emph{$C$-marginal}.
We refer the reader to \cite{SauST21amor} for figures illustrating the above notions.

\paragraph{Influence.}
For every $\mathfrak{R}$-normal cycle $C$ of $\mathsf{Compass}_{\mathfrak{R}}(W)$ we define the set
$$\mathsf{Influence}_{\mathfrak{R}}(C)=\{\sigma(c)\mid \mbox{$c$ is a cell of $\mathfrak{R}$ that is not $C$-external}\}.$$

A wall $W'$  of $\mathsf{Compass}_{\mathfrak{R}}(W)$  is \emph{$\mathfrak{R}$-normal} if $D(W')$ is $\mathfrak{R}$-normal.
Notice that every wall of $W$ (and hence every subwall of $W$) is an $\mathfrak{R}$-normal wall of $\mathsf{Compass}_{\mathfrak{R}}(W).$ We denote by ${\mathcal{S}}_{\mathfrak{R}}(W)$ the set of all $\mathfrak{R}$-normal walls of $\mathsf{Compass}_{\mathfrak{R}}(W).$ Given a wall $W'\in\mathcal{S}_{\mathfrak{R}}(W)$ and a cell $c$ of $\mathfrak{R},$
we say that $c$ is \emph{$W'$-perimetric/internal/external/marginal} if $c$ is  $D(W')$-perimetric/internal/external/marginal, respectively.
We also use $K_{W'},$ $\Delta_{W'},$ $\mathsf{Influence}_{\mathfrak{R}}(W')$ as shortcuts
for $K_{D(W')},$ $\Delta_{D(W')},$ $\mathsf{Influence}_{\mathfrak{R}}(D(W')),$ respectively.

\paragraph{Regular flatness pairs.}
We call a  flatness pair $(W,\mathfrak{R})$ of a graph $G$ \emph{regular}
if none of its cells is $W$-external, $W$-marginal, or untidy.

\paragraph{Tilts of flatness pairs.}
Let $(W,\mathfrak{R})$ and $(\tilde{W}',\tilde{\mathfrak{R}}')$  be two flatness pairs of a graph $G$
and let $W'\in\mathcal{S}_{\mathfrak{R}}(W).$
We assume that ${\mathfrak{R}}=(X,Y,P,C,\Gamma,\sigma,\pi)$
and $\tilde{\mathfrak{R}}'=(X',Y',P',C',\Gamma',\sigma',\pi').$
We say that   $(\tilde{W}',\tilde{\mathfrak{R}}')$   is a \emph{$W'$-tilt}
of $(W,\mathfrak{R})$ if
\begin{itemize}
	\item $\tilde{\mathfrak{R}}'$ does not have $\tilde{W}'$-external cells,
	\item  $\tilde{W}'$ is a tilt of $W',$
	\item  the set of $\tilde{W}'$-internal  cells of  $\tilde{\mathfrak{R}}'$ is the same as the set of $W'$-internal
	      cells of ${\mathfrak{R}}$ and their images via $\sigma'$ and ${\sigma}$ are also the same,
	\item $\mathsf{Compass}_{\tilde{\mathfrak{R}}'}(\tilde{W}')$ is a subgraph of $\cupall\mathsf{Influence}_{{\mathfrak{R}}}(W'),$ and
	\item if $c$ is a cell in $C(\Gamma') \setminus C(\Gamma),$ then $|\tilde{c}| \leq 2.$
\end{itemize}

The next observation follows from the third item above and the fact that the cells corresponding to flaps
containing a central vertex of $W'$ are all internal (recall that the height of a wall is always at least three).

\begin{observation}\label{label_surreptitiously}
	Let $(W,\mathfrak{R})$ be a flatness pair of a graph $G$ and $W'\in{\mathcal{S}}_{\mathfrak{R}}(W).$
	For every $W'$-tilt $(\tilde{W}',\tilde{\mathfrak{R}}')$ of $(W,\mathfrak{R}),$ the central vertices of $W'$ belong to the vertex set of $\mathsf{Compass}_{\tilde{\mathfrak{R}}'}(\tilde{W}').$
\end{observation}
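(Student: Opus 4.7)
The plan is to show that each central vertex $v$ of $W'$ remains present as a vertex in some flap of $\tilde{\frR}'$; since ${\sf compass}_{\tilde{\frR}'}(\tilde{W}')=\cupall\{\sigma'(c')\mid c'\in C(\Gamma')\}$ by rendition axiom (c)(1), this will suffice. I would first invoke the same rendition axiom (c)(1) for $\frR$ itself to locate some cell $c\in C(\Gamma)$ with $v\in V(\sigma(c))$: such a cell exists because $v\in V({\sf compass}_{\frR}(W))$, as $W'\in {\cal S}_{\frR}(W)$ entails that ${\sf compass}_{\frR}(W')\subseteq{\sf compass}_{\frR}(W)$ and hence $V(W')\subseteq V({\sf compass}_{\frR}(W))$.

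The key claim to prove is that every such cell $c$ is $W'$-internal, and this is where the standing convention that walls have height at least $3$ is used. Being a central vertex, $v$ lies on no layer of $W'$ and in particular $v\notin V(D(W'))$; I would argue on the one hand that $c$ cannot be $W'$-external, because topologically in the painting the vertex $v$ cannot be realized in a region disjoint from $\Delta_{W'}$, and on the other hand that $c$ cannot be $W'$-perimetric, since otherwise $\sigma(c)$ would contain both an edge of $D(W')$ and the central vertex $v$, whereas the height-at-least-$3$ hypothesis guarantees at least one full wall cycle of $W'$ separating $v$ from $D(W')$. Combined with the fact that flaps share with other flaps only the at most three nails in $\partial\sigma(c)$, this yields a contradiction with the edge-disjoint distribution of the wall across the flaps of $\frR$.

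Once $c$ has been identified as $W'$-internal, the proof concludes immediately by the third bullet of the $W'$-tilt definition, which says that the $W'$-internal cells of $\frR$ coincide as a set with the $\tilde{W}'$-internal cells of $\tilde{\frR}'$, and that the two renditions assign the same flap to each of them. In particular $c\in C(\Gamma')$ with $\sigma'(c)=\sigma(c)$, so $v\in V(\sigma'(c))\subseteq V({\sf compass}_{\tilde{\frR}'}(\tilde{W}'))$, as needed. I expect the internality step to be the main obstacle; a fully rigorous argument would likely work with the arcs $A_c$ and the $P_c^{\rm in}/P_c^{\rm out}$ decomposition from \autoref{label_exceptionalness}, which are the formal tools for controlling how $\frR$-normal cycles of the compass (here $D(W')$) interact with the cells of the painting.
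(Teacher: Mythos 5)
Your route is exactly the paper's: the paper settles this observation in one sentence, asserting that the cells whose flaps contain a central vertex of $W'$ are all $W'$-internal and then invoking the third item of the $W'$-tilt definition, which is precisely your plan; your closing step via rendition axiom (c)(1) applied to $\tilde{\frak{R}}'$ is also fine, and the small notational slip (${\sf compass}_{\frR}(W')$ is not defined, but all you need is that $W'\in{\cal S}_{\frR}(W)$ makes $W'$ a subgraph of ${\sf compass}_{\frR}(W)$) is harmless.

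The one substantive objection is the reason you give for excluding $W'$-perimetric cells. You claim that height at least $3$ ``guarantees at least one full wall cycle of $W'$ separating $v$ from $D(W')$''. This fails in the extremal case where $W'$ has height exactly $3$, which is the minimum the definitions allow: a $3$-wall contains no cycle disjoint from its perimeter at all (its only branch vertices off the perimeter are the two central ones), and in an elementary $3$-wall each central vertex is in fact adjacent to two perimeter vertices. So the contradiction you describe cannot be derived from a separating cycle in that case, and your sub-argument as stated does not cover all admissible $W'$. To be fair, the paper offers nothing beyond the bare assertion of internality (with the parenthetical reminder that heights are at least three), so at the paper's level of detail your write-up is no less complete; but the justification you sketch would have to be replaced for height $3$, e.g.\ by arguing directly with the arcs $A_c$, the disk $\Delta_{W'}$, and the $P_c^{\rm in}/P_c^{\rm out}$ decomposition, as you yourself anticipate. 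Note also that there is a shorter route both you and the paper bypass: central vertices of $W'$ lie in the interior of $W'$, and by the second item of the tilt definition $\tilde{W}'$ has the same interior as $W'$, so they are vertices of $\tilde{W}'$ itself and hence of ${\sf compass}_{\tilde{\frak{R}}'}(\tilde{W}')$; this avoids the cell classification entirely.
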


Also, given a regular flatness pair $(W,\mathfrak{R})$ of a graph $G$ and a $W'\in\mathcal{S}_{\mathfrak{R}}(W),$
for every $W'$-tilt $(\tilde{W}', \tilde{\mathfrak{R}}')$ of $(W,\mathfrak{R}),$ by definition, none of its cells is $\tilde{W}'$-external, $\tilde{W}'$-marginal, or untidy -- thus, $(\tilde{W}', \tilde{\mathfrak{R}}')$ is regular.
Therefore, regularity of a flatness pair is a property that its tilts ``inherit''.

\begin{observation}\label{label_expressionism}
	If $(W,\mathfrak{R})$ is a regular flatness pair of a graph $G,$ then for every $W'\in\mathcal{S}_{\mathfrak{R}}(W),$ every $W'$-tilt of $(W,\mathfrak{R})$ is also regular.
\end{observation}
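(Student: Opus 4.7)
The plan is to verify each of the three conditions defining regularity---absence of $\tilde{W}'$-external, $\tilde{W}'$-marginal, and untidy cells in $\tilde{\frR}'$---directly from the five bullets of the definition of a $W'$-tilt, together with the regularity hypothesis on $(W,\frR)$.

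The absence of $\tilde{W}'$-external cells is literally the first bullet of the $W'$-tilt definition, so no further argument is needed. For untidy cells, I would partition $C(\Gamma')$ into $C(\Gamma')\cap C(\Gamma)$ and $C(\Gamma')\setminus C(\Gamma)$. The cells in $C(\Gamma')\setminus C(\Gamma)$ satisfy $|\tilde c|\leq 2$ by the last bullet, and since untidiness requires $|\tilde c|=3$, such cells are automatically tidy. For cells in $C(\Gamma')\cap C(\Gamma)$, the third bullet identifies the $\tilde{W}'$-internal cells of $\tilde{\frR}'$ with the $W'$-internal cells of $\frR$ together with their $\sigma$-images, while for the remaining shared cells the fourth bullet localizes them inside $\cupall\,{\sf influence}_{\frR}(W')$; in both cases untidiness, which is a purely local property of $\tilde c$ together with $\sigma(c)$, is inherited from $\frR$, where every cell is tidy by regularity.

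For $\tilde{W}'$-marginal cells the argument is analogous. Marginality also requires $|\tilde c|=3$, eliminating the cells in $C(\Gamma')\setminus C(\Gamma)$ immediately. For a surviving cell $c$ that is $\tilde{W}'$-perimetric, the arc $A_c$ and the component $D_c\subseteq\Delta_{\tilde{W}'}$ witnessing potential marginality are determined by $\sigma'(c)=\sigma(c)$ and by the position of $c$ relative to the normal cycle $D(\tilde{W}')$. Since $\tilde{W}'$ is a tilt of $W'\in{\cal S}_{\frR}(W)$ and ${\sf compass}_{\tilde{\frR}'}(\tilde{W}')\subseteq\cupall\,{\sf influence}_{\frR}(W')$, this configuration transports back to an analogous configuration in $\frR$ that would contradict regularity of $(W,\frR)$. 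The main obstacle I expect is bookkeeping this geometric correspondence between the local structure around $\tilde{W}'$ in $\tilde{\frR}'$ and the local structure around $W'$ in $\frR$: one has to check that the disk $\Delta_{\tilde{W}'}$ and the disk associated with the corresponding normal cycle of $\frR$ enclose the same cells and induce the same choice of $A_c$ and $D_c$, which is precisely what the third and fourth bullets of the $W'$-tilt definition are designed to guarantee. Combining the three parts yields that $(\tilde{W}',\tilde{\frR}')$ is regular.
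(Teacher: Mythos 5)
Your overall plan is the same as the paper's: the paper offers no more than a one-sentence justification (it asserts that, by the definition of a $W'$-tilt, none of the cells of $\tilde{\frak{R}}'$ is $\tilde{W}'$-external, $\tilde{W}'$-marginal, or untidy), and your treatment of the first condition and of the new cells is exactly right: the absence of $\tilde{W}'$-external cells is the first bullet, and any cell in $C(\Gamma')\setminus C(\Gamma)$ has $|\tilde{c}|\leq 2$, so it can be neither untidy nor marginal, since both notions require $|\tilde{c}|=3$.

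The part of your write-up that goes beyond the paper, however, contains a concrete misstep. Untidiness and marginality are not absolute properties of a cell together with $\sigma(c)$: they are measured against the wall of the flatness pair under consideration. Regularity of $(W,\frak{R})$ says that no cell of $\frak{R}$ is untidy \emph{with respect to $W$} or $W$-marginal, i.e., marginal with respect to the cycle $D(W)$. What you must rule out in the tilt is untidiness with respect to $\tilde{W}'$ and marginality with respect to $D(\tilde{W}')$. Since $W'$ is an arbitrary member of ${\cal S}_{\frak{R}}(W)$ (an $\frak{R}$-normal wall of the compass, not necessarily a subwall of $W$), its edges need not be edges of $W$, and the perimeter of $\tilde{W}'$ in general differs from that of $W'$; so a shared cell being tidy with respect to $W$ does not "locally" yield tidiness with respect to $\tilde{W}'$. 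Likewise, your marginality argument transports a $D(\tilde{W}')$-marginal configuration back to what would at best be a $D(W')$-marginal configuration of a cell of $\frak{R}$ — and regularity of $(W,\frak{R})$ does not forbid $W'$-marginal cells, only $W$-marginal ones. So the contradiction you invoke is not available along the route you sketch, and the "bookkeeping" you defer is precisely where the argument would have to do real work (or, as in the paper, be delegated to the framework of the tilt construction in the cited companion paper).
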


We next present the two main results of~\cite{SauST21amor}.
In fact, in \cite{SauST21amor} we
provide two algorithms that,
given a flatness pair $(W,\mathfrak{R})$
of a graph $G,$ compute a $W'$-tilt of $(W,\mathfrak{R}),$ for some given subwall $W'$ of $W,$ and a regular flatness pair of $G,$ respectively.
Here, we use the non-algorithmic version of these results.

\begin{proposition}\label{label_prosperously}
	Let $G$ be a graph and $(W,\mathfrak{R})$ be a flatness pair of $G.$
	For every wall $W'\in\mathcal{S}_{\mathfrak{R}}(W),$ there is a flatness pair $(\tilde{W}',\tilde{\mathfrak{R}}')$ that is a $W'$-tilt of $(W,\mathfrak{R}).$
\end{proposition}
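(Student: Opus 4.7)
The plan is to construct $(\tilde{W}', \tilde{\frak{R}}')$ by restricting the flatness certificate $\frak{R} = (X,Y,P,C,\Gamma,\sigma,\pi)$ to the part of $\Delta$ enclosed by the circle $K_{W'}$. Intuitively, I keep every $W'$-internal cell intact, discard every $W'$-external cell, and replace each $W'$-perimetric cell by one or two ``truncated'' cells sitting on the new boundary $K_{W'}$. The new wall $\tilde{W}'$ will be forced to coincide with $W'$ in the interior, and its perimeter will be a re-routing of $D(W')$ through the perimetric flaps so as to pass through the images (under $\pi'$) of the new boundary points of $\Gamma'$.

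First I would fix the separation. Let $Y' := V(\cupall {\sf influence}_{\frak{R}}(W'))$ and take $X'$ to contain $V(G) \setminus Y'$ together with those vertices of $Y'$ that sit on a $W'$-perimetric flap and are $\pi$-images of points on the outer side of $K_{W'}$; this makes $(X',Y')$ a separation of $G$ whose interface consists of the endpoints $\pi(p),\pi(q)$ of the arcs $A_c$ for each $W'$-perimetric cell $c$ (plus, where necessary, the third point $\pi(z)$ when $z$ lies inside $\overline{D_c}$). Next I would build the new $\Delta_{W'}$-painting $\Gamma'$ as follows: keep every $W'$-internal cell of $\Gamma$ unchanged; and for each $W'$-perimetric cell $c$ of $\Gamma$, introduce one new cell $c'$ whose closure is the connected component of $\hat{c}\cap \Delta_{W'}$, placed so that its boundary points in $N(\Gamma')$ are exactly $\{p,q\}$ (or $\{p,q,z\}$ when $z\in \overline{D_c}$; since $c$ is tidy this degenerate subcase splits cleanly into two new cells each of size at most two). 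For a new cell $c'$ I set $\sigma'(c')$ to be the subgraph of $\sigma(c)$ induced by the vertices that remain inside $Y'$ via the separation chosen above; $\pi'$ extends $\pi$ on the surviving boundary points.

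I would then define $\tilde{W}'$ to be the tilt of $W'$ whose interior is that of $W'$ and whose perimeter is obtained by concatenating, around $\partial \Delta_{W'}$, a cyclic sequence of $(\pi(p),\pi(q))$-paths $P_{c}^{\mathrm{in}}$ inside the (surviving parts of the) $W'$-perimetric flaps. The pegs $P'$ are then exactly the set of $\pi'$-images of the new boundary points of $\Gamma'$, and the corners $C'$ can be chosen among them to match the four corners of $W'$. By construction, $V(\tilde{W}')\subseteq Y'$, $P'\subseteq X'\cap Y'\subseteq V(D(\tilde{W}'))$, and the cyclic ordering $\Omega'$ of $X'\cap Y'$ along $D(\tilde{W}')$ is precisely the ordering of the points of $N(\Gamma')\cap \bd(\Delta_{W'})$ on $K_{W'}$, so $(\Gamma',\sigma',\pi')$ is an $\Omega'$-rendition of $G[Y']$.

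Finally I would verify the five items in the definition of a $W'$-tilt: (i) $\tilde{\frak R}'$ has no $\tilde{W}'$-external cells by construction; (ii) $\tilde W'$ is a tilt of $W'$ since the interiors agree; (iii) the $\tilde W'$-internal cells are exactly the $W'$-internal cells of $\frak R$ with the same $\sigma$-images (they were copied verbatim); (iv) ${\sf compass}_{\tilde{\frak R}'}(\tilde W') = \bigcup_{c'\in C(\Gamma')}\sigma'(c')\subseteq \cupall{\sf influence}_{\frak R}(W')$; (v) each cell in $C(\Gamma')\setminus C(\Gamma)$ comes from truncating a perimetric cell along $A_c$, hence has at most two boundary points. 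The main obstacle is the careful handling of three-point perimetric cells, and in particular of cells whose third boundary point $\pi(z)$ lies on $D(W')$: there one must decide whether $z$ joins the inside or the outside component, and split the original cell accordingly into new cells of size at most two. Once this case analysis is done (which is where tidiness of perimetric cells is used to avoid edges of $W$ slipping out of the intended component), all rendition axioms (1)--(5) and the tilt conditions follow directly.
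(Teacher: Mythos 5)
You should first be aware that this paper contains no proof of \autoref{label_prosperously}: it is imported, in non-algorithmic form, as one of the two main results of \cite{SauST21amor}, so there is no in-paper argument to compare yours against. That said, your overall plan --- restrict the rendition to the disk $\Delta_{W'}$, keep the $W'$-internal cells verbatim, discard the $W'$-external ones, and perform surgery on the $W'$-perimetric cells while adjusting the perimeter inside the perimetric flaps --- is the same route followed in the companion paper, so the strategy itself is sound.

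The problem is that your write-up defers exactly the part that constitutes the proof, and the one concrete recipe you do give for the new wall cannot work in general. Each $P_c^{\rm in}$ is, by definition, a subpath of $D(W')$, so ``concatenating the $P_c^{\rm in}$'s'' yields a subgraph of $D(W')$, which is a cycle only when it is all of $D(W')$; hence your construction can never produce a genuinely re-routed perimeter, whereas that is precisely what tilts exist for (if the perimeter never had to move, one could always take $\tilde{W}'=W'$, and the allowance for new cells with $|\tilde{c}|\leq 2$ would be pointless). The troublesome configurations are the perimetric cells $c$ with a third boundary vertex $\pi(z)$: depending on whether $\pi(z)$ lies on $P_c^{\rm in}$, is shared with surviving cells, or is attached to the rest of ${\sf influence}_{\frR}(W')$ only through $\sigma(c)$, one must either push the interface $X'\cap Y'$ to new vertices of the flap (which then have to become boundary nodes of $\Gamma'$ and lie on $D(\tilde{W}')$), or re-route the perimeter through $\pi(z)$, or truncate $\sigma(c)$ properly; keeping $\sigma'(c')$ essentially equal to $\sigma(c)$ with boundary $\{p,q\}$ violates rendition axiom (4) whenever $\pi(z)$ remains shared with a kept cell, and violates the separation requirement whenever $\pi(z)$ has neighbours outside the influence. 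Moreover, your case split leans on tidiness of the perimetric cells, which is not available here: the proposition concerns an arbitrary flatness pair, whose cells may be untidy (regularity is only supplied by the separate statement \autoref{label_aberenjenado}, which you cannot invoke). So the closing claim that ``once this case analysis is done, all rendition axioms and tilt conditions follow directly'' is where the actual proof lives; as it stands, the proposal is a reasonable plan rather than a proof.
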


\begin{proposition}
	\label{label_aberenjenado}
	Let $G$ be a graph
	and $({W},{\mathfrak{R}})$ be a flatness pair of $G.$
	There is a regular flatness pair $({W}^{\star},{\mathfrak{R}}^{\star})$ of $G,$ with the same height as $({W},{\mathfrak{R}}),$  such that $\mathsf{Compass}_{\mathfrak{R}^{\star}}(W^{\star})\subseteq \mathsf{Compass}_{\mathfrak{R}}(W).$
\end{proposition}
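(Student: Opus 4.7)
The strategy is to transform $(W,\frR)$ into a regular flatness pair by eliminating in turn the three defects that forbid regularity: untidy cells, $W$-marginal cells, and $W$-external cells. Each step preserves the wall up to tilt, does not increase the compass, and is local in nature; the final step invokes \autoref{label_prosperously} to clean up once the first two steps have been performed.

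First, I would eliminate untidy cells one-by-one. Fix an untidy cell $c$ of $\frR$, so $|\tilde{c}|=3$ and some vertex $x\in\pi(\tilde{c})\cap V(W)$ has two of its incident $W$-edges lying in the flap $\sigma(c)$. Since $\sigma(c)$ is naturally drawn inside the closed disk $\bar{c}$ with its boundary vertices $\pi(\tilde{c})$ lying on $\bd(c)$, one can insert a short arc from $\pi^{-1}(x)$ into the interior of $c$, ending at a newly introduced point of $N(\Gamma)$, which partitions $c$ into two sub-cells. The flap $\sigma(c)$ is split accordingly along $x$ so that the two offending wall-edges end up in distinct new flaps. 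A direct check shows that the new cells satisfy $|\tilde{c}|\leq 3$ and that the rendition axioms (1)--(5) of \autoref{label_inappropriate} remain valid. Iterating over all untidy cells produces a flatness pair $(W,\frR_1)$ with the same height and the same compass and with no untidy cells.

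Second, I would eliminate $W$-marginal cells. If $c$ is $W$-marginal in $\frR_1$, then $|\tilde{c}|=3$, $c$ is tidy and $D(W)$-perimetric, and the open-disk component $D_c$ of $c\setminus A_c$ sitting inside $\Delta_{D(W)}$ has, in its closure, only two of the three points of $\tilde{c}$; hence the third boundary point $r$ of $c$ lies outside $\overline{D}_c$. Re-route the arc $A_c$ to the opposite side of $r$: this absorbs $\pi(r)$ and the corresponding portion of $\sigma(c)$ into the part of the painting outside $\Delta_{D(W)}$, equivalently moving them from the $Y$-side to the $X$-side of the certifying separation. Since $c$ is tidy, $r$ is not a 3-branch vertex of $W$ nor an endpoint of a wall edge trapped in $\sigma(c)$, so the wall $W$ and its flatness survive. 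Iterating yields $(W,\frR_2)$ with no untidy and no $W$-marginal cells, satisfying ${\sf compass}_{\frR_2}(W)\subseteq{\sf compass}_{\frR_1}(W)={\sf compass}_{\frR}(W)$.

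Finally, I would apply \autoref{label_prosperously} to $(W,\frR_2)$ with the choice $W':=W\in{\cal S}_{\frR_2}(W)$, obtaining a $W$-tilt $(W^\star,\frR^\star)$. By the definition of a $W$-tilt, $\frR^\star$ has no $W^\star$-external cells, the height is preserved, and
\[
{\sf compass}_{\frR^\star}(W^\star)\subseteq\cupall{\sf influence}_{\frR_2}(W)\subseteq{\sf compass}_{\frR_2}(W)\subseteq{\sf compass}_{\frR}(W).
\]
Moreover, the $W^\star$-internal cells of $\frR^\star$ coincide with the $W$-internal cells of $\frR_2$ (which are already tidy and non-marginal by the previous steps), while every cell in $C(\Gamma^\star)\setminus C(\Gamma_2)$ satisfies $|\tilde{c}|\leq 2$ and therefore cannot be untidy or marginal. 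Hence $(W^\star,\frR^\star)$ is the desired regular flatness pair. The main obstacle in this proof is the surgery in the first two steps: the local modifications of the painting $\Gamma$, the flap map $\sigma$, and (for marginal cells) the cyclic ordering $\Omega$ on $\bd(\Delta)$ must be carried out so as to preserve all five rendition axioms simultaneously and so that no previously well-behaved cell is turned into a new untidy or marginal cell as a side effect; the bookkeeping is delicate but each operation has strictly bounded local scope, ensuring termination.
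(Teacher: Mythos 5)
You should first be aware that the paper itself contains no proof of this proposition: it is one of the two results imported wholesale from \cite{SauST21amor} (see the sentences preceding \autoref{label_prosperously}), so your argument has to stand on its own, and as written its first step has a genuine gap. Topologically, an arc from $\pi^{-1}(x)$ to a newly introduced \emph{interior} point of $N(\Gamma)$ does not disconnect the open disk $c$, so it does not ``partition $c$ into two sub-cells'' (and any new point of $N(\Gamma)$ must be given an injective image under $\pi$, which you never specify). More seriously, a flap is an arbitrary graph attached to the rest of the compass along the at most three vertices $\pi(\tilde{c})$; it is \emph{not} embedded in $\bar{c}$, and untidiness cannot in general be removed by refining the painting while keeping $W$ fixed. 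Concretely, let $\tilde{c}=\{p_1,p_2,p_3\}$, $x=\pi(p_1)$, let the two wall edges at $x$ inside the flap be $xy_1$ and $xy_2$, and let $\sigma(c)$ be a clique on $\{x,y_1,y_2,\pi(p_2),\pi(p_3)\}$ (the wall enters at $\pi(p_2)$, runs through $y_1,x,y_2$, and leaves at $\pi(p_3)$). Splitting $c$ into two cells $c_1,c_2$ with $|\tilde{c}_1|,|\tilde{c}_2|\leq 3$ forces, via conditions (2)--(4) of the rendition, an edge-disjoint decomposition $\sigma(c)=F_1\cup F_2$ with $xy_1\in F_1$, $xy_2\in F_2$, with $V(F_1)\cap V(F_2)$ contained in the (at most two) shared boundary nodes, and with $\pi(p_3)\notin V(F_1)$, $\pi(p_2)\notin V(F_2)$; in a clique no such separation exists (any choice of the new node and split is defeated by one of the edges $y_1\pi(p_3)$, $y_2\pi(p_2)$). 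The failure is structural: the proposition promises only \emph{some} wall $W^{\star}$ of the same height precisely because, in general, one has to reroute the wall through the offending flaps as well, which is what the cited proof in \cite{SauST21amor} is prepared to do, whereas your first two steps insist on keeping $W$.

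The later steps have related problems. For a marginal cell the arc $A_c$ is not free to be ``re-routed to the opposite side of $r$'': whether $\pi^{-1}(z)\in A_c$ is dictated by $P_{c}^{\rm in}$, and when $\pi^{-1}(z)\notin A_c$ the side of $A_c$ on which it lies is forced by the cyclic position of the three nodes on $\bd(c)$. Moving $\pi(r)$ and part of $\sigma(c)$ ``to the $X$-side'' can destroy the separation $(X,Y)$ whenever $\pi(r)$ has neighbours in the interior of the compass (and it need not lie on $V(D(W))$, as membership in $X\cap Y$ would require); also, tidiness only excludes \emph{two} wall edges of $\sigma(c)$ at a boundary vertex, so your claim that $r$ is neither a branch vertex nor an endpoint of a wall edge inside $\sigma(c)$ is unjustified. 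Finally, invoking \autoref{label_prosperously} does eliminate external cells, and new cells with $|\tilde{c}|\leq 2$ are indeed harmless, but you still have to check that the surviving old perimetric cells are neither untidy nor marginal \emph{with respect to the new wall $W^{\star}$ and the new rendition} (both notions depend on $W^{\star}$ and on the arcs of $\frR^{\star}$), which the proposal does not address. In short, purely local rendition surgery with a fixed wall does not go through; a correct proof has to modify the wall and the rendition together, as in \cite{SauST21amor}.
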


We conclude this subsection with the Flat Wall Theorem and, in particular, the version proved by Chuzhoy \cite{Chuzhoy15impr}, restated in our framework (see \cite[Proposition 7]{SauST21amor}).

\begin{proposition}\label{label_aldobrandesco}
	There exist two functions  $\newfun{label_everyevening}:\mathbb{N}\to \mathbb{N}$  and
	$\newfun{label_entretenerme}:\mathbb{N}\to \mathbb{N},$ where the images of $\funref{label_everyevening}$ are odd numbers, such that if $r \in \mathbb{N}_{\geq 3}$ is an odd integer, $t\in\mathbb{N}_{\geq 1},$
	$G$ is a graph that does not contain $K_t$ as a minor,  and  $W$ is an $\funref{label_everyevening}(t)\cdot r$-wall of $G,$
	then there is a set $A\subseteq V(G)$ with $|A|\leq \funref{label_entretenerme}(t)$
	and a flatness pair $(\tilde{W}',\tilde{\mathfrak{R}}')$ of $G\setminus A$ of height $r.$
	Moreover, $\funref{label_everyevening}(t)=\mathcal{O}(t^{2})$ and $\funref{label_entretenerme}(t)=t-5.$
\end{proposition}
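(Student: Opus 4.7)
The plan is to obtain this as a direct translation of Chuzhoy's quantitative version of the Flat Wall Theorem~\cite{Chuzhoy15impr} into the flatness-pair framework introduced in~\autoref{label_exceptionalness}, exactly as was done in~\cite{SauST21amor}. The classical Flat Wall Theorem, as stated by Robertson and Seymour and sharpened by Kawarabayashi, Thomas, and Wollan~\cite{KawarabayashiTW18anew} and by Chuzhoy~\cite{Chuzhoy15impr}, already produces a set $A\subseteq V(G)$ of bounded size together with a subwall $\tilde{W}'$ of height $r$ in $G\setminus A$ that is flat, in the sense that there is a separation $(X,Y)$ of $G\setminus A$ with $V(\tilde{W}')\subseteq Y$, with the pegs of $\tilde{W}'$ lying in $X\cap Y\subseteq V(D(\tilde{W}'))$, and with $G[Y]$ admitting a near-embedding in a disk whose boundary meets $X\cap Y$ in the cyclic order induced by $D(\tilde{W}')$. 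The quantitative bounds $f_{\ref{label_everyevening}}(t)=\mathcal{O}(t^2)$ and $f_{\ref{label_entretenerme}}(t)=t-5$ are precisely those obtained by Chuzhoy.

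Thus the only real work is to argue that the near-embedding produced by Chuzhoy's theorem can be packaged as a $\Delta$-painting $\Gamma$ together with maps $\sigma$ and $\pi$ satisfying conditions (a)--(c) in the definition of an $\Omega$-rendition (\autoref{label_inappropriate}). Concretely, one would take the cells of $\Gamma$ to correspond to the ``flaps'' arising in the near-embedding, let $\pi$ send the noded boundary points of the painting to the attachment vertices of those flaps in $G[Y]$, and let $\sigma$ send each cell to the subgraph of $G[Y]$ supported by the corresponding flap. Properties (1)--(5) of a rendition then become, respectively: the union of the flaps is $G[Y]$; distinct flaps are edge-disjoint (they share only attachment vertices); each flap contains its own attachments; no internal flap-vertex is used by any other flap; and the boundary nodes are placed on $\mathsf{bd}(\Delta)$ respecting the cyclic order $\Omega$ inherited from $D(\tilde{W}')$. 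All of these are immediate from the definition of a near-embedding inside a disk. In particular, the bound $|\tilde{c}|\leq 3$ on each cell is the standard requirement that every flap has at most three attachment vertices, which is built into Chuzhoy's statement.

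The potentially delicate points are bookkeeping issues rather than combinatorial obstacles. First, one must check that the choice of pegs and corners $(P,C)$ of $\tilde{W}'$ is compatible with the separation $(X,Y)$ in the sense required by the definition of a flat wall, namely that $P\subseteq X\cap Y\subseteq V(D(\tilde{W}'))$; this is exactly how the flat wall is produced in~\cite{Chuzhoy15impr,KawarabayashiTW18anew}, so no extra argument is needed. Second, one must verify that the disk $\Delta$ carrying the painting can be chosen so that $\mathsf{bd}(\Delta)\cap N(\Gamma)$ maps onto $V(\Omega)=X\cap Y$ in the correct cyclic order; this is the standard consequence of the fact that the near-embedding is an embedding up to three-attachment flaps, so the outer face of $G[Y]$ coincides with the interface along $D(\tilde{W}')$.

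The main (and essentially only) obstacle is making sure the translation preserves the quantitative bounds, i.e.\ that no blow-up is introduced in the height of the wall or in the size of $A$. Since the painting/rendition construction is a purely syntactic repackaging of Chuzhoy's near-embedding, the height of $\tilde{W}'$ remains $r$ and the size of $A$ remains at most $t-5$. This yields both the asymptotic bound $f_{\ref{label_everyevening}}(t)=\mathcal{O}(t^2)$ required to apply the flat wall theorem to an $f_{\ref{label_everyevening}}(t)\cdot r$-wall and the explicit bound $f_{\ref{label_entretenerme}}(t)=t-5$. Formally, the entire argument reduces to invoking~\cite[Proposition~7]{SauST21amor}, which is exactly this translation, completing the proof.
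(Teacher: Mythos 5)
Your proposal is correct and matches the paper's treatment: the paper does not reprove this statement but takes it directly as the restatement of Chuzhoy's Flat Wall Theorem in the flatness-pair framework, i.e.\ it simply invokes \cite[Proposition 7]{SauST21amor}, which is exactly what your argument reduces to. The surrounding discussion of how the near-embedding is repackaged as an $\Omega$-rendition is consistent with how that translation is carried out in \cite{SauST21amor,KawarabayashiTW18anew}, so no gap remains.
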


\subsection{Homogeneous walls}\label{label_backwardness}
We first present some definitions on boundaried graphs and folios that will be used to define the notion of homogeneous walls.
Following this, we present some results concerning homogeneous walls that are key ingredients in our proofs.

\paragraph{Boundaried graphs.}
Let $t\in\mathbb{N}.$
A \emph{$t$-boundaried graph} is a triple $\bound{G} = (G,B,\rho)$ where $G$ is a graph, $B \subseteq V(G),$ $|B| = t,$ and
$\rho : B \to [t]$ is a bijection.
We call $B$ the \emph{boundary} of $\mathbf{G}$ and the vertices of $B$ \emph{the boundary vertices} of $\mathbf{G}.$
For $B'\subseteq B,$ we define the bijection $\rho[B']:B'\to [|B'|]$ such that for every $v\in B',$ $\rho[B'](v) = |\{u\in B' \mid \rho(u)\leq \rho(v)\}|.$
Also, for $S\subseteq V(G),$ we denote by $\mathbf{G}\setminus S$ the $t$-boundaried graph $(G\setminus S, B\setminus S, \rho[B\setminus S]).$
We  say that  $\textbf{G}_1=(G_1,B_1,\rho_1)$ and $\textbf{G}_{2}=(G_2,B_2,\rho_2)$
are \emph{isomorphic} if there is an isomorphism from $G_{1}$ to $G_{2}$
that extends the bijection $\rho_{2}^{-1}\circ \rho_{1}.$
The triple $(G,B,\rho)$ is a \emph{boundaried graph} if it is a $t$-boundaried graph for some $t\in\mathbb{N}.$
As in~\cite{RobertsonS95XIII} (see also \cite{BasteST20acom}), we define the \emph{detail} of a boundaried graph
$\bound{G} = (G,B,\rho)$ as  $\mathsf{detail}(\bound{G}):=\max\{|E(G)|,|V(G)\setminus B|\}.$
We denote by ${\mathcal{B}}^{(t)}$ the set of all (pairwise non-isomorphic)  $t$-boundaried graphs and by ${\mathcal{B}}_{\ell}^{(t)}$ the set of all (pairwise non-isomorphic) $t$-boundaried graphs with detail at most $\ell.$
We also set ${\mathcal{B}}=\bigcup_{t\in\mathbb{N}}{\mathcal{B}}^{(t)}.$

\paragraph{Topological minors of boundaried graphs.}
We say that $(M,T)$   is a \emph{\textsf{tm}-pair} if $M$ is  a graph, $T\subseteq V(M),$ and  all vertices in
$V(M)\setminus T$ have degree two. We denote by $\mathsf{diss}(M,T)$ the graph obtained
from  $M$ by {dissolving} all vertices  in $V(M)\setminus T.$
A \emph{\textsf{tm}-pair} of a graph $G$  is a  \emph{\textsf{tm}-pair}  $(M,T)$ where $M$ is a subgraph of $G.$
We call the vertices in $T$ \emph{branch} vertices of $(M,T).$
We need to deal with topological minors for the notion of homogeneity defined below, on which  the statement of~\cite[Theorem 5.2]{BasteST20acom} relies. This result will be crucial in the proof of \autoref{label_acquaintances}.

If $\textbf{M}=(M,B,\rho)\in{\mathcal{B}}$ and   $T\subseteq V(M)$ with $B\subseteq T,$ we  call  $(\textbf{M},T)$ a \emph{\textsf{btm}-pair}
and we  define  $\mathsf{diss}(\textbf{M},T)=(\mathsf{diss}(M, T),B,\rho).$ Note that we do not permit dissolution of boundary vertices, as we consider all of them to be branch vertices. If $\textbf{G}=(G,B,\rho)$ is a boundaried graph and $(M,T)$ is a \textsf{tm}-pair of $G$
where $B\subseteq T,$  then we say that
$(\textbf{M},T),$ where $\textbf{M}=(M,B,\rho),$ is a   \emph{\textsf{btm}-pair} of $\textbf{G}=(G,B,\rho).$
Let $\textbf{G}_{1},\mathbf{G}_{2}$ be two boundaried graphs.
We say that $\textbf{G}_{1}$ is a \emph{topological minor}
of $\textbf{G}_{2},$ denoted by $\textbf{G}_{1}\pretp\textbf{G}_{2},$ if
$\textbf{G}_{2}$ has a \textsf{btm}-pair $(\textbf{M},T)$
such that  $\mathsf{diss}(\textbf{M},T)$ is isomorphic to $\textbf{G}_{1}.$

\paragraph{Folios.}
Given a $\textbf{G}\in\mathcal{B}$ and a positive integer $\ell,$ we define the \emph{$\ell$-folio} of $\mathbf{G}$
as
$${\ell}\textsf{-folio}(\textbf{G})=\{\textbf{G}'\in\mathcal{ B} \mid \textbf{G}'\pretp \textbf{G} \mbox{~and $\textbf{G}'$ has detail at most $\ell$}\}.$$

The number of distinct $\ell$-folios of $t$-boundaried graphs is indicated in the following result, proved first in~\cite{BasteST20hittI} and used also in~\cite{BasteST20acom}.
\begin{proposition}\label{label_objectivement}
	There exists a function $\newfun{label_dogmatizador}: \mathbb{N}^{2} \to \mathbb{N}$ such that for every $t,\ell\in \mathbb{N},$ $|\{\ell\textsf{-folio}(\mathbf{G}) \mid \mathbf{G}\in\mathcal{B}_{\ell}^{(t)}\}|\leq \funref{label_dogmatizador}(t,\ell).$ Moreover, $\funref{label_dogmatizador}(t,\ell)=2^{2^{\mathcal{O}((t+\ell)\cdot\log(t+\ell))}}.$
\end{proposition}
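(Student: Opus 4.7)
The plan is to bound the number of possible $\ell$-folios by bounding the size of the universe $\mathcal{B}_\ell^{(t)}$ from which they are drawn, and then using the fact that a folio is simply a subset of this universe. First, for any $\textbf{G}=(G,B,\rho)\in\mathcal{B}_\ell^{(t)}$, the detail bound gives $|V(G)\setminus B|\leq\ell$ and $|E(G)|\leq \ell$, so $|V(G)|\leq n$ where $n:=t+\ell$. This is the only place where the detail bound is exploited.

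Next, I would count isomorphism classes in $\mathcal{B}_\ell^{(t)}$ by a crude overcount on labeled representatives. For each possible vertex count in $[t,t+\ell]$, I would place the vertices on a label set of size at most $n$, choose the $t$ boundary vertices and their labeling via $\rho$ (at most $n!/(n-t)!\leq n^t$ possibilities), and then pick the edge set among the $\binom{n}{2}$ possible pairs, with at most $\ell$ edges. The number of edge sets is at most $\sum_{j=0}^\ell\binom{\binom{n}{2}}{j}\leq (\ell+1)\cdot n^{2\ell}$. Multiplying and summing over the $\ell+1$ possible sizes yields
\[
|\mathcal{B}_\ell^{(t)}|\ \leq\ (\ell+1)^2\cdot n^{t+2\ell}\ =\ 2^{\mathcal{O}((t+\ell)\log(t+\ell))}.
\]

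Finally, I would verify that each $\ell$-folio lies inside $2^{\mathcal{B}_\ell^{(t)}}$. From the definition of $\preceq_{\sf tm}$ on boundaried graphs and of $\mathsf{diss}(\textbf{M},T)$, every element of $\ell\text{-{\sf folio}}(\textbf{G})$ inherits the boundary set and the boundary labeling of $\textbf{G}$ (since boundary vertices are never dissolved and are included in $T$), so every element is again $t$-boundaried and by definition has detail at most $\ell$. Hence $\ell\text{-{\sf folio}}(\textbf{G})\subseteq \mathcal{B}_\ell^{(t)}$, and therefore
\[
|\{\ell\text{-{\sf folio}}(\textbf{G})\mid \textbf{G}\in\mathcal{B}_\ell^{(t)}\}|\ \leq\ 2^{|\mathcal{B}_\ell^{(t)}|}\ \leq\ 2^{2^{\mathcal{O}((t+\ell)\log(t+\ell))}},
\]
which is the claimed bound, and we may set $\funref{label_dogmatizador}(t,\ell)$ to be this quantity.

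There is essentially no combinatorial obstacle here; the only point requiring care is that the boundaried-graph isomorphism relation only fixes the boundary labeling and permits non-boundary vertices to be permuted freely. The clean way to sidestep this issue, as above, is to overcount by working with fully labeled vertex sets, since a crude upper bound is already small enough to yield the double-exponential target bound.
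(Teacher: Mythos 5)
Your argument is correct. Note that the paper itself gives no proof of this proposition: it is imported from \cite{BasteST20hittI} (as stated just before the proposition), so there is no in-paper argument to compare against; your proof is the standard counting argument underlying that citation. Both of your key steps check out against the paper's definitions: since ${\sf detail}({\bf G})=\max\{|E(G)|,|V(G)\setminus B|\}\leq \ell$ forces $|V(G)|\leq t+\ell$ and $|E(G)|\leq \ell$, the labelled overcount indeed gives $|{\cal B}^{(t)}_{\ell}|\leq 2^{{\cal O}((t+\ell)\log(t+\ell))}$ (overcounting labelled representatives is legitimate, as every isomorphism class of boundaried graphs has at least one such representative); and since in a {\sf btm}-pair the boundary is required to lie in the branch set and boundaried isomorphism fixes the boundary labelling, every element of $\ell\mbox{\sf-folio}({\bf G})$ for ${\bf G}\in{\cal B}^{(t)}_{\ell}$ is again a $t$-boundaried graph of detail at most $\ell$, so each folio is a subset of ${\cal B}^{(t)}_{\ell}$ and the number of folios is at most $2^{|{\cal B}^{(t)}_{\ell}|}=2^{2^{{\cal O}((t+\ell)\log(t+\ell))}}$, as claimed.
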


\paragraph{Augmented flaps.}
Let $G$ be a graph, $A$ be a subset of $V(G)$ of size $a,$ and $(W,\mathfrak{R})$ be a flatness pair of $G\setminus A.$
For each flap $F\in \mathsf{Flaps}_{\mathfrak{R}}(W)$ we consider an injective labeling  $\lambda_{F}: \partial F\rightarrow\{1,2,3\}$ such that
the set of labels assigned by $\lambda_{F}$ to $\partial F$ is  one of $\{1\},$ $\{1,2\},$ $\{1,2,3\}.$
Also, let $\tilde{a}\in[a].$
For every set ${\tilde{A}}\in \binom{A}{\tilde{a}},$ we consider a bijection  $\rho_{\tilde{A}}: \tilde{A}\to [\tilde{a}].$
The labellings in $\mathcal{L}=\{\lambda_{F} \mid F\in \mathsf{Flaps}_{\mathfrak{R}}(W)\}$ and the labellings in $\{\rho_{\tilde{A}} \mid \tilde{A} \in \binom{A}{\tilde{a}}\}$
will be useful for defining a set of boundaried graphs that we will call augmented flaps. We first need some more definitions.

Given a flap $F\in\mathsf{Flaps}_{\mathfrak{R}}(W),$ we define an ordering
$\Omega(F)=(x_{1},\ldots,x_{q}),$ with $q\leq 3,$ of the vertices of $\partial{F}$
so that
\begin{itemize}
	\item $(x_{1},\ldots,x_{q})$ is a  counter-clockwise cyclic ordering of the vertices of $\partial F$ as they appear in the corresponding cell of $C(\Gamma).$ Notice that this cyclic ordering is significant  only when $|\partial F|=3,$
	      in the sense that $(x_{1},x_{2},x_{3})$ remains invariant under shifting, i.e., $(x_{1},x_{2},x_{3})$ is the same as $ (x_{2},x_{3},x_{1})$ but not  under inversion, i.e.,   $(x_{1},x_{2},x_{3})$ is not the same as $(x_{3},x_{2},x_{1}),$ and
	\item   for $i\in[q],$ $\lambda_{F}(x_{i})=i.$
\end{itemize}
Notice that the second condition is necessary for completing the definition of the ordering $\Omega(F),$
and this is the reason why we set up the labellings in $\mathcal{L}.$\medskip\medskip

For each set $\tilde{A} \in \binom{A}{\tilde{a}}$ and each $F\in \mathsf{Flaps}_{\mathfrak{R}}(W)$ with $t_{F}:=|\partial F|,$
we fix $\rho_{F}: \partial F\to [\tilde{a}+1,\tilde{a}+t_F]$ such that
$(\rho^{-1}_{F}(\tilde{a}+1),\ldots,\rho^{-1}_{F}(\tilde{a}+t_F))= \Omega(F).$
Also, we define the boundaried graph $$\textbf{F}^{\tilde{A}}:=(G[\tilde{A}\cup F],\tilde{A}\cup \partial F,\rho_{\tilde{A}}\cup \rho_F)$$
and we denote by $F^{\tilde{A}}$ the underlying graph of $\textbf{F}^{\tilde{A}}.$ We call $\textbf{F}^{\tilde{A}}$ an \emph{$\tilde{A}$-augmented flap} of the flatness pair $(W,\mathfrak{R})$ of $G\setminus A$
in $G.$

\paragraph{Palettes and homogeneity.}
For each $\mathfrak{R}$-normal cycle $C$ of $\mathsf{Compass}_\mathfrak{R} (W)$ and each set $\tilde{A}\in 2^A,$ we define $(\tilde{A},\ell)\textsf{-palette}(C)=\{\ell\textsf{-folio}(\mathbf{F}^{\tilde{A}})\mid F\in \mathsf{Influence}_{\mathfrak{R}}(C)\}.$
Given a set $\tilde{A}\in 2^A,$ we say that the flatness pair $(W,\mathfrak{R})$  of $G\setminus A$ is \emph{$\ell$-homogeneous with respect to $\tilde{A}$} if every  \textsl{internal} brick of ${W}$ has the \textsl{same} $(\tilde{A},\ell)$\textsf{-palette} (seen as a cycle of $\mathsf{Compass}_\mathfrak{R} (W)$).
Also, given a collection ${\mathcal{S}}\subseteq 2^A,$ we say that the flatness pair $(W,\mathfrak{R})$  of $G\setminus A$ is \emph{$\ell$-homogeneous	with respect to ${\mathcal{S}}$}
if it is $\ell$-homogeneous with respect to every $\tilde{A}\in\mathcal{S}.$

The following observation is a consequence of the fact that, given a wall $W$ and a  subwall $W'$ of $W,$ every internal brick of a tilt $W''$ of $W'$ is also an internal brick of $W.$

\begin{observation}\label{label_convenciones}
	Let $\ell\in\mathbb{N},$ $G$ be a graph, $A\subseteq V(G),$ ${\mathcal{S}}\subseteq 2^A,$ and $(W,\mathfrak{R})$  be a flatness pair of $G\setminus A.$ If $(W,\mathfrak{R})$ is  $\ell$-homogeneous
	with respect to ${\mathcal{S}},$ then for every subwall $W'$ of $W,$ every $W'$-tilt of $(W,\mathfrak{R})$ is also $\ell$-homogeneous
	with respect to ${\mathcal{S}}.$
\end{observation}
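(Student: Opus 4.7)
The plan is to fix an arbitrary $\tilde{A}\in {\cal S}$ and verify that, under the rendition $\tilde{\frR}'$, every internal brick of $\tilde{W}'$ yields the same $(\tilde{A},\ell)\mbox{\sf -palette}$; since $\tilde{A}$ is arbitrary, this will give $\ell$-homogeneity of $(\tilde{W}',\tilde{\frR}')$ with respect to ${\cal S}$. The engine of the argument is precisely the fact flagged in the sentence preceding the statement: every internal brick of a tilt $\tilde{W}'$ of a subwall $W'$ of $W$ is itself an internal brick of $W$. This, in turn, uses that $\tilde{W}'$ and $W'$ share the same interior (by the definition of a tilt) and that the internal bricks of a subwall of $W$ are internal bricks of $W.$

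Fix such an internal brick $C$ of $\tilde{W}'$. The central step is to check that ${\sf influence}_{\tilde{\frR}'}(C)$ and ${\sf influence}_{\frR}(C)$ consist of the same collection of flaps. Since $C$ lies in the interior of $\tilde{W}'$, the disk $\Delta_{C}$ is contained in $\Delta_{\tilde{W}'}=\Delta_{W'}$, so every cell of $\tilde{\frR}'$ that is not $C$-external is $\tilde{W}'$-internal, and, symmetrically, every cell of $\frR$ that is not $C$-external is $W'$-internal. By the third bullet in the definition of a $W'$-tilt, the set of $\tilde{W}'$-internal cells of $\tilde{\frR}'$ coincides with the set of $W'$-internal cells of $\frR$, and their images under $\sigma'$ and $\sigma$ agree. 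Hence the two influences list the same flaps, and the cyclic orderings $\Omega(F)$ read off from the corresponding cells are the same in both renditions.

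Because each $\tilde{A}$-augmented flap ${\bf F}^{\tilde{A}}$ is determined solely by the underlying graph $G[\tilde{A}\cup F]$, the fixed bijection $\rho_{\tilde{A}}$, and the ordering $\Omega(F)$, the augmented flaps and therefore their $\ell\mbox{\sf -folios}$ match between $\tilde{\frR}'$ and $\frR$. Consequently, the $(\tilde{A},\ell)\mbox{\sf -palette}$ of $C$ computed in $\tilde{\frR}'$ equals the one computed in $\frR$; by the hypothesis that $(W,\frR)$ is $\ell$-homogeneous with respect to $\tilde{A}$, this palette is common to all internal bricks of $W$, and so to all internal bricks of $\tilde{W}'$. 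The only mildly non-routine ingredient is the inclusion $\Delta_{C}\subseteq \Delta_{W'}$, which the plan handles using the equality of interiors of $W'$ and $\tilde{W}'$; everything else is bookkeeping driven by the definitions of tilt, influence, and palette.
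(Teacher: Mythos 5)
Your argument is correct and takes essentially the same route as the paper, which justifies the observation in one line by precisely the fact you identify as the engine — that every internal brick of a tilt of a subwall of $W$ is an internal brick of $W$ — with the remaining rendition bookkeeping (via the third bullet of the tilt definition) that you spell out to see that the palettes computed in $\tilde{\frR}'$ and in $\frR$ coincide. The only loose point is the identification $\Delta_{\tilde{W}'}=\Delta_{W'}$, which compares disks in two different paintings; it is harmless because all you actually need are the two containments $\Delta_{C}\subseteq\Delta_{\tilde{W}'}$ inside $\tilde{\frR}'$ and $\Delta_{C}\subseteq\Delta_{W'}$ inside $\frR$, each of which holds as you intend.
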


\medskip

Let $a,\tilde{a},\ell\in \mathbb{N},$ where $\tilde{a}\leq a.$
Also, let $G$ be a graph, $A$ be a subset of $V(G)$ of size at most $a,$ and $(W,\mathfrak{R})$ be a flatness pair of $G\setminus A.$
For every flap  $F\in\mathsf{Flaps}_\mathfrak{R} (W),$ we define the function
$\mathsf{var}^{(A,\tilde{a},\ell)}_F:\binom{A}{\leq \tilde{a}}\to \{\ell\textsf{-folio}(\mathbf{G}) \mid \mathbf{G}\in \bigcup_{i\in[\tilde{a}+3]}{\mathcal{B}}^{(i)}\}$
that maps each set $\tilde{A}\in\binom{A}{\leq \tilde{a}}$ to the set  $\ell\textsf{-folio}(\mathbf{F}^{\tilde{A}}).$

We next provide an upper bound to the number of different $\ell$-folios of the augmented flaps of a flatness pair $(W,\mathfrak{R}).$

\begin{lemma}\label{label_anticipation}
	There exists a function $\newfun{label_exiieriences}:\mathbb{N}^3\to \mathbb{N}$ such that if $a,\tilde{a},\ell\in \mathbb{N},$ where $\tilde{a}\leq a,$ $G$ is a graph, $A$ is a subset of $V(G)$ of size at most $a,$ and $(W,\mathfrak{R})$ is a flatness pair of $G\setminus A,$ then $$|\{\mathsf{var}^{(A,\tilde{a},\ell)}_F\mid F\in \mathsf{Flaps}_\mathfrak{R} (W)\}|\leq \funref{label_exiieriences}(a,\tilde{a},\ell).$$
	Moreover, $ \funref{label_exiieriences}(a,\tilde{a},\ell)= 2^{a^{\tilde{a}} \cdot 2^{\mathcal{O}((\tilde{a}+\ell )\cdot \log (\tilde{a}+\ell))}}.$
\end{lemma}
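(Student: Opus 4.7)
The plan is to view each ${\sf var}^{(A,\tilde{a},\ell)}_F$ as a function with a small domain and a codomain of bounded size, and then to bound the number of such functions by the usual $(\text{codomain})^{(\text{domain})}$ estimate. Since ${\sf var}^{(A,\tilde{a},\ell)}_F$ is completely determined by its image at each point of $\binom{A}{\leq \tilde{a}}$, no structural information about the flap $F$ itself is needed beyond what is captured by the $\ell$-folios of its augmented versions.

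First I would bound the size of the domain. Since $|A|\leq a$, we have
\[
\bigl|\tbinom{A}{\leq \tilde{a}}\bigr| \;\leq\; \sum_{i=0}^{\tilde{a}} \binom{a}{i} \;=\; {\cal O}(a^{\tilde{a}}).
\]
Next I would bound the size of the codomain. For every $\tilde{A}\in \binom{A}{\leq \tilde{a}}$ and every $F\in {\sf flaps}_\frR(W)$, the augmented flap ${\bf F}^{\tilde{A}}$ is a $t$-boundaried graph with
\[
t \;=\; |\tilde{A}| + |\partial F| \;\leq\; \tilde{a} + 3,
\]
where the bound $|\partial F|\leq 3$ comes directly from the definition of a flap (its associated cell $c$ satisfies $|\tilde{c}|\leq 3$ by the definition of a $\Delta$-painting). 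Therefore ${\sf var}^{(A,\tilde{a},\ell)}_F(\tilde{A}) = \ell\mbox{\sf -folio}({\bf F}^{\tilde{A}})$ is the $\ell$-folio of a boundaried graph whose boundary has size at most $\tilde{a}+3$, and by \autoref{label_objectivement} the number of such $\ell$-folios is bounded by $\funref{label_dogmatizador}(\tilde{a}+3,\ell) = 2^{2^{{\cal O}((\tilde{a}+\ell)\cdot \log(\tilde{a}+\ell))}}$.

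Combining the two bounds, I would conclude that the total number of distinct functions ${\sf var}^{(A,\tilde{a},\ell)}_F$ is at most
\[
\bigl(\funref{label_dogmatizador}(\tilde{a}+3,\ell)\bigr)^{{\cal O}(a^{\tilde{a}})} \;=\; \Bigl(2^{2^{{\cal O}((\tilde{a}+\ell)\log(\tilde{a}+\ell))}}\Bigr)^{{\cal O}(a^{\tilde{a}})} \;=\; 2^{a^{\tilde{a}}\cdot 2^{{\cal O}((\tilde{a}+\ell)\log(\tilde{a}+\ell))}},
\]
which matches the claimed expression for $\funref{label_exiieriences}(a,\tilde{a},\ell)$.

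There is no real obstacle here: this is a direct counting argument built on top of \autoref{label_objectivement}. The only point requiring some care is remembering that $|\partial F|\leq 3$ (so the boundary size of ${\bf F}^{\tilde{A}}$ is $\tilde{a}+{\cal O}(1)$, which gets absorbed into the ${\cal O}(\cdot)$ in the exponent), and collecting the constants in the right places so that the final expression has exactly the form $2^{a^{\tilde{a}} \cdot 2^{{\cal O}((\tilde{a}+\ell)\log(\tilde{a}+\ell))}}$ stated in the lemma.
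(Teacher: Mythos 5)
Your argument is correct and is essentially the paper's own justification: the paper proves this lemma by exactly the same counting, noting that $\binom{A}{\leq\tilde{a}}$ has ${\cal O}(a^{\tilde{a}})$ elements and invoking \autoref{label_objectivement} for boundaried graphs with boundary of size at most $\tilde{a}+3$ (since $|\partial F|\leq 3$), so the number of functions is bounded by $\funref{label_dogmatizador}(\tilde{a}+3,\ell)^{{\cal O}(a^{\tilde{a}})}=2^{a^{\tilde{a}}\cdot 2^{{\cal O}((\tilde{a}+\ell)\log(\tilde{a}+\ell))}}$. Nothing further is needed.
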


The proof of \autoref{label_anticipation} follows directly from \autoref{label_objectivement} combined with the fact that there are $\mathcal{O}(|A|^{\tilde{a}})$ elements in $\binom{A}{\leq \tilde{a}}$.

\autoref{label_anticipation} allows us to define an injective function $\sigma: \{\mathsf{var}^{(A,\tilde{a},\ell)}_F\mid F\in \mathsf{Flaps}_\mathfrak{R} (W)\} \to [\funref{label_exiieriences}(a,\tilde{a},\ell)]$
that maps each function in $\{\mathsf{var}^{(A,\tilde{a},\ell)}_F\mid F\in\mathsf{Flaps}_\mathfrak{R} (W)\}$ to an integer in $[\funref{label_exiieriences}(a,\tilde{a},\ell)].$
Using $\sigma,$ we define a function $\zeta_{A,\tilde{a},\ell}:\mathsf{Flaps}_\mathfrak{R} (W)\to [\funref{label_exiieriences}(a,\tilde{a},\ell)],$ that maps each flap $F\in\mathsf{Flaps}_\mathfrak{R} (W)$ to the integer $\sigma(\mathsf{var}^{(A,\tilde{a},\ell)}_F).$
In \cite{SauST21amor}, given a $w\in\mathbb{N},$ the notion of homogeneity is defined with respect to a \textsl{flap-coloring $\zeta$ of $(W,\mathfrak{R})$ with $w$ colors}, that is a function from $\mathsf{Flaps}_\mathfrak{R} (W)$ to $[w].$
This function gives rise to the $\zeta\textsf{-palette}$ of each $\mathfrak{R}$-normal cycle of $\mathsf{Compass}_{\mathfrak{R}} (W)$ which, in turn, is used to define the notion of a \emph{$\zeta$-homogeneous} flatness pair.
Hence, using the terminology of \cite{SauST21amor},  $\zeta_{A,\tilde{a},\ell}$ is a flap-coloring of $(W,\mathfrak{R})$ with $\funref{label_exiieriences}(a,\tilde{a},\ell)$ colors, that ``colors''  each flap $F\in \mathsf{Flaps}_\mathfrak{R} (W)$ by mapping it to the integer $\sigma(\mathsf{var}^{(A,\tilde{a},\ell)}_F),$ and the notion of $\ell$-homogeneity
with respect to $\binom{A}{\leq {\tilde{a}}}$ defined here can be alternatively interpreted as $\zeta_{A,\tilde{a},\ell}$-homogeneity.
The following result, which is the application of a result of Sau et al.~\cite[Lemma 12]{SauST21amor} for the flap-coloring $\zeta_{A,\tilde{a},\ell},$ provides the conditions that guarantee the existence of a homogeneous flatness pair ``inside'' a given flatness pair of a graph.

\begin{proposition}\label{label_transportado}
	There exists a function $\newfun{label_complications}:\mathbb{N}^4\to \mathbb{N},$ whose images are odd integers,
	such that for every odd integer $r\geq 3,$ every $a,\tilde{a},\ell\in \mathbb{N}$ where $\tilde{a}\leq a,$ if $G$ is a graph, $A$ is a subset of $V(G)$ of size at most $a,$ and $(W,\mathfrak{R})$ is a flatness pair of $G\setminus A$ of height $\funref{label_complications}(r,a,\tilde{a},\ell),$ then $W$ contains some subwall $W'$ of height $r$ such that every $W'$-tilt of $(W,\mathfrak{R})$ is  $\ell$-homogeneous
	with respect to $\binom{A}{\leq {\tilde{a}}}.$ Moreover, $\funref{label_complications}(r,a,\tilde{a},\ell) =\mathcal{O}(r^{\funref{label_exiieriences}(a,\tilde{a},\ell)}).$
\end{proposition}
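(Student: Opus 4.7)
The plan is to invoke the general homogeneity lemma for flap-colored flatness pairs proved in \cite[Lemma~12]{SauST21amor}, applied to the specific flap-coloring $\zeta_{A,\tilde{a},\ell}$ constructed in the discussion preceding the statement. That lemma asserts that for any flap-coloring $\zeta$ of a flatness pair $(W,\frR)$ using $w$ colors, if $W$ has height $\mathcal{O}(r^{w})$ then $W$ contains a subwall $W'$ of height $r$ such that every $W'$-tilt of $(W,\frR)$ is $\zeta$-homogeneous. So the proof reduces to (i) exhibiting a flap-coloring whose homogeneity matches $\ell$-homogeneity with respect to $\binom{A}{\leq \tilde{a}}$, and (ii) bounding the number of colors it uses.

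For (i), I would take $\zeta := \zeta_{A,\tilde{a},\ell}$, which sends each flap $F\in {\sf flaps}_\frR(W)$ to $\sigma({\sf var}^{(A,\tilde{a},\ell)}_F)$ via the injective encoding $\sigma$ supplied by \autoref{label_anticipation}. Because $\sigma$ is injective, the single integer $\zeta_{A,\tilde{a},\ell}(F)$ records the entire tuple $({\ell}\mbox{\sf-folio}({\bf F}^{\tilde{A}}))_{\tilde{A}\in \binom{A}{\leq \tilde{a}}}$. Consequently, for each $\frR$-normal cycle $C$, the $\zeta_{A,\tilde{a},\ell}$-palette of $C$ in the sense of \cite{SauST21amor} encodes precisely the family $\{(\tilde{A},\ell)\mbox{\sf-palette}(C) \mid \tilde{A}\in \binom{A}{\leq \tilde{a}}\}$. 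Equality of $\zeta_{A,\tilde{a},\ell}$-palettes on two internal bricks is therefore equivalent to equality of $(\tilde{A},\ell)\mbox{\sf-palette}$ on those bricks for every $\tilde{A}\in \binom{A}{\leq \tilde{a}}$, which is exactly the required $\ell$-homogeneity.

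For (ii), \autoref{label_anticipation} bounds the number of colors of $\zeta_{A,\tilde{a},\ell}$ by $\funref{label_exiieriences}(a,\tilde{a},\ell)$. Substituting $w = \funref{label_exiieriences}(a,\tilde{a},\ell)$ into \cite[Lemma~12]{SauST21amor} yields a subwall $W'$ of height $r$ such that every $W'$-tilt of $(W,\frR)$ is $\zeta_{A,\tilde{a},\ell}$-homogeneous, provided that $W$ has height at least the resulting bound $\mathcal{O}(r^{\funref{label_exiieriences}(a,\tilde{a},\ell)})$. I would then define $\funref{label_complications}(r,a,\tilde{a},\ell)$ to be this bound, rounded up to the next odd integer so that the image of the function is odd as required by the statement.

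The only step requiring care is the translation between the ``palette across all $\tilde{A}$'' notion used in the present paper and the single flap-coloring palette of \cite{SauST21amor}: this is handled precisely by the injective encoding $\sigma$, which collapses the family of local folio data of a flap into a single color without loss of information. Once this identification is justified, no further combinatorial machinery is needed: the bound on $\funref{label_complications}$ follows directly from that of \cite[Lemma~12]{SauST21amor} combined with the counting in \autoref{label_anticipation}.
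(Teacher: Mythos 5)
Your proposal is correct and coincides with the paper's treatment: the proposition is stated there precisely as the application of \cite[Lemma~12]{SauST21amor} to the flap-coloring $\zeta_{A,\tilde{a},\ell}$, with the number of colors bounded by \autoref{label_anticipation} and the translation between $\zeta_{A,\tilde{a},\ell}$-homogeneity and $\ell$-homogeneity with respect to $\binom{A}{\leq \tilde{a}}$ handled exactly via the injective encoding $\sigma$, as you describe. (Note only that the direction actually needed is that equality of $\zeta_{A,\tilde{a},\ell}$-palettes implies equality of all $(\tilde{A},\ell)$-palettes, which follows by projecting the set of encoded tuples; your claim of full equivalence is slightly stronger than necessary but harmless.)
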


\section{Obstructions have small treewidth}
The goal of this section is to provide an upper bound on the treewidth of every graph in $\mathbf{obs}(\mathcal{A}_k (\mathbf{excl}(\mathcal{F}))),$ stated in \autoref{label_transfiguration}.
In order to prove this,
in \autoref{label_villebrequin}, we
define the notion of a canonical partition of a graph with respect to a flatness pair and provide some additional results (\autoref{label_overestimation} and \autoref{lemma_bidim_branch}) that will be useful in the proof of \autoref{label_transfiguration}.
Also, in \autoref{label_reconquistasen} we argue how to detect an irrelevant vertex inside a homogeneous flatness pair of ``big enough'' height.
The proof of \autoref{label_transfiguration} is finally presented in \autoref{label_informaremos}. The proof of \autoref{lemma_bidim_branch} is postponed to \autoref{label_nominalistic}.

\subsection{Results on canonical partitions}\label{label_villebrequin}

\paragraph{Canonical partitions.}
Let $r\geq 3$ be an odd integer, let $W$ be an $r$-wall, and let $P_{1}, \ldots, P_{r}$ (resp. $L_{1},\ldots, L_{r}$) be its vertical (resp. horizontal) paths.
For every even  (resp. odd) $i\in[2,r-1]$ and every $j\in[2,r-1],$ we define ${A}^{(i,j)}$ to be the  subpath of $P_{i}$ that starts from a vertex of $P_{i}\cap L_{j}$ and finishes at a neighbor of a vertex in $L_{j+1}$ (resp. $L_{j-1}$), such that $P_{i}\cap L_{j}\subseteq A^{(i,j)}$ and $A^{(i,j)}$ does not intersect $L_{j+1}$ (resp. $L_{j-1}$).
Similarly, for every $i,j\in[2,r-1],$ we define $B^{(i,j)}$ to be the subpath of $L_{j}$ that starts from a vertex of $P_{i}\cap L_{j}$ and finishes at a neighbor of a vertex in $P_{i-1},$ such that $P_{i}\cap L_{j}\subseteq A^{(i,j)}$ and $A^{(i,j)}$ does not intersect $P_{i-1}.$

For every  $i,j\in[2,r-1],$ we denote by $Z^{(i,j)}$ the graph $A^{(i,j)}\cup B^{(i,j)}$ and by ${Z_\mathrm{ext}}$ the graph $W\setminus \bigcup_{i,j\in[2,r-1]} Z_{i,j}.$
Now consider the collection $\mathcal{Q}=\{Z_\mathrm{ext}\}\cup\{Z_{i,j}\mid i,j\in[2,r-1]\}$
and observe that the graphs in $\mathcal{Q}$ are connected subgraphs of $W$ and their vertex sets form a partition of $V(W).$
We call $\mathcal{Q}$ the \emph{canonical partition} of $W.$ Also, we call every $Z_{i,j},$ for $i,j\in[2,r-1],$ an \emph{internal bag} of $\mathcal{Q},$ while we refer to $Z_\mathrm{ext}$ as the \emph{external bag} of $\mathcal{Q}.$ See \autoref{label_bitternesses} for an illustration of the notions defined above.

\begin{figure}[ht]
	\centering
	\includegraphics[width=6cm]{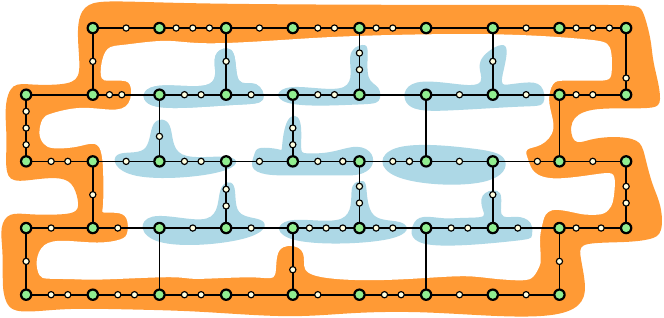}
	\caption{\small A $5$-wall and its canonical partition $\mathcal{Q}.$ The orange bag is the external bag $Z_\mathrm{ext}.$}
	\label{label_bitternesses}
\end{figure}

Let $(W,\mathfrak{R})$ be a flatness pair of a graph $G.$
Consider the canonical partition $\mathcal{Q}$ of $W.$ We enhance the graphs of $\mathcal{Q}$
so to include in them all the vertices of $G$ by applying the following procedure. We set $\tilde{\mathcal{Q}}:=\mathcal{Q}$
and, as long as there is a vertex  $x\in V(\textsf{Compass}_{\mathfrak{R}}(W))\setminus V(\cupall \tilde{\mathcal{Q}})$
that is adjacent to a vertex of a graph $Z\in \tilde{\mathcal{Q}},$  update $\tilde{\mathcal{Q}}:=\tilde{\mathcal{Q}}\setminus \{Z\}\cup \{\tilde{Z}\},$ where $\tilde{Z}=\textsf{Compass}_{\mathfrak{R}}(W)[\{x\}\cup V(Z)].$ Since $\mathsf{Compass}_{\mathfrak{R}}(W)$ is a connected graph, in this way we define a partition of the vertices of $\mathsf{Compass}_{\mathfrak{R}}(W)$ into subsets inducing connected graphs.
We call the $\tilde{Z}\in\tilde{\mathcal{Q}}$ that contains $Z_\mathrm{ext}$ as a subgraph the \emph{external bag} of $\tilde{\mathcal{Q}},$ and we denote it by $\tilde{Z}_\mathrm{ext},$ while we call \emph{internal bags} of $\tilde{\mathcal{Q}}$ all graphs in $\tilde{\mathcal{Q}}\setminus \{\tilde{Z}_\mathrm{ext}\}.$
Moreover, we enhance $\tilde{\mathcal{Q}}$ by adding all vertices of $G\setminus V(\textsf{Compass}_\mathfrak{R} (W)$ in its external bag, i.e., by updating $\tilde{Z}_\mathrm{ext}: = G[V(\tilde{Z}_\mathrm{ext})\cup V(G\setminus V(\textsf{Compass}_\mathfrak{R} (W))].$
We call such a  partition $\tilde{\mathcal{Q}}$ a \emph{$(W,\mathfrak{R})$-canonical partition of $G.$}
Notice that a $(W,\mathfrak{R})$-canonical partition of $G$ is  not unique (since the sets in $\mathcal{Q}$ can be ``expanded'' arbitrarily when introducing vertex $x$).
We stress that every internal bag of a $(W,\mathfrak{R})$-canonical partition of $G$ contains vertices of exactly four bricks of $W.$

Let $(W,\mathfrak{R})$ be a flatness pair of a graph $G$ of height $r,$ for some odd $r\geq 3,$ and let $\tilde{\mathcal{Q}}$ be a $(W,\mathfrak{R})$-canonical partition of $G.$
For every $i\in[(r-1)/2],$ we say that a graph $Z\in \tilde{\mathcal{Q}}$ is an \emph{$i$-internal bag of $\tilde{\mathcal{Q}}$} if $V(Z)$ does not contain any vertex of the first $i$ layers of $W.$
Notice that the $1$-internal bags of $\tilde{\mathcal{Q}}$ are the internal bags of $\tilde{\mathcal{Q}}.$
\medskip

The next result intuitively states that, given a flatness pair $(W,\mathfrak{R})$ of ``big enough'' height and a $(W,\mathfrak{R})$-canonical partition $\tilde{\mathcal{Q}}$ of $G,$ we can find a ``packing'' of subwalls of $W$ that are inside some central part of $W$ and such that the vertex set of every internal bag of $\tilde{\mathcal{Q}}$ intersects the vertices of the flaps in the influence of at most one of these walls.

\begin{lemma}\label{label_overestimation}
	There exists a function $\newfun{label_apreciadores}: \mathbb{N}^3 \to \mathbb{N}$ such that if $p,l\in\mathbb{N}_{\geq 1},$ $x\in\mathbb{N}_{\geq 3}$ is an odd integer, $G$ is a graph, $(W,\mathfrak{R})$ is a flatness pair of $G$ of height at least $\funref{label_apreciadores}(l,x,p),$ and $\tilde{\mathcal{Q}}$ is a $(W,\mathfrak{R})$-canonical partition of $G,$ then
	there is a collection $\mathcal{W}=\{W_1, \ldots, W_{l}\}$ of $x$-subwalls of $W$ such that
	\begin{itemize}
		\item for every $i \in [l],$ $\cupall\mathsf{Influence}_{\mathfrak{R}}(W_i)$ is a subgraph of $\cupall \{Z\mid Z \text{ is a $p$-internal bag of }\tilde{\mathcal{Q}}\}$ and
		\item for every $i,j\in[l]$ with $i\neq j,$ there is no internal bag of $\tilde{\mathcal{Q}}$ that has vertices of both $V(\cupall\mathsf{Influence}_\mathfrak{R} (W_i))$ and $V(\cupall\mathsf{Influence}_\mathfrak{R} (W_j)).$
\end{itemize}
Moreover, $\funref{label_apreciadores}(l,x,p)=\mathcal{O}(\sqrt{l}\cdot x + p)$
and $\mathcal{W}$ can be constructed in time $\mathcal{O}(n+m)$.
\end{lemma}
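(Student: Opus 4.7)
The plan is to select the $z$ subwalls as a $\lceil\sqrt{z}\rceil\times\lceil\sqrt{z}\rceil$ arrangement of $x$-subwalls placed inside the central portion of $W$ obtained by peeling off its first $p$ layers, pairwise separated by buffer strips of constant width. Concretely, I would set $\funref{label_apreciadores}(z,x,p)$ to be an odd integer of the form $\lceil\sqrt{z}\rceil\cdot(x+c_1)+2p+c_2$ for sufficiently large absolute constants $c_1, c_2$, and denote by $W^\circ$ the central $(r-2p)$-subwall of $W$ (that is, $W$ with its first $p$ layers removed). Inside the grid structure of $W^\circ$ I would choose $z$ pairwise-disjoint rectangular ``windows'' $R_1,\ldots,R_z$ of dimensions $x\times x$ (in terms of vertical and horizontal paths of $W^\circ$), laid out on a regular $\lceil\sqrt{z}\rceil\times\lceil\sqrt{z}\rceil$ grid with a buffer of at least $c_1$ vertical (resp.\ horizontal) paths between consecutive columns (resp.\ rows) of windows. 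Each $R_k$ induces an $x$-subwall $W_k$ of $W$ in the obvious way, yielding the collection ${\cal W}:=\{W_1,\ldots,W_z\}$.

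Next, I would verify the two required properties. For the first bullet, the containment $W_k\subseteq W^\circ$ ensures that $D(W_k)$ lies strictly inside the $p$-th layer of $W$, so $\Delta_{W_k}$ is contained in the sub-disk of $\Delta$ bounded by that layer; therefore every cell of $\frR$ that is not $W_k$-external corresponds to a flap whose vertices get assigned, by the greedy construction of $\tilde{\cal Q}$, to internal bags indexed by grid positions of depth at least $p$ from the boundary of $W$, i.e., to $p$-internal bags. For the second bullet, I would use the connectivity structure of $\tilde{\cal Q}$: each bag $\tilde{Q}^{(i,j)}$ is built by expanding $Q^{(i,j)}$ inside ${\sf compass}_\frR(W)$, and since the $Q^{(i',j')}$'s partition $V(W)$ while distinct flaps are edge-disjoint by property (2) of the rendition, any non-wall vertex $v$ assigned to $\tilde{Q}^{(i,j)}$ is connected to $Q^{(i,j)}$ via a path that enters each flap $F$ it traverses through the base $\partial F$. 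Hence the only bags that can absorb interior vertices of a flap $F\in{\sf influence}_\frR(W_k)$ are those whose seed $Q^{(i,j)}$ shares a vertex with $\partial F$. Since the cells of flaps in ${\sf influence}_\frR(W_k)$ lie inside or on the boundary of $\Delta_{W_k}$, their bases correspond to wall vertices at grid positions within a bounded-radius neighborhood of $R_k$. Choosing $c_1$ to be a small absolute constant (taking into account that each internal bag may span up to three bricks of $W$) makes these neighborhoods pairwise disjoint across the $z$ windows, yielding the second bullet.

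The main subtlety is making this last step fully rigorous: one must verify that the greedy expansion defining $\tilde{\cal Q}$ cannot ``drag'' an interior flap vertex across a buffer strip into a bag serving a different window $W_{k'}$. This is handled by a case analysis on how a vertex in ${\sf compass}_\frR(W)\setminus V(W)$ can be reached from a given $Q^{(i,j)}$, leveraging that flap interiors are edge-disjoint and that the base of every perimetric flap of $W_k$ consists of pegs and $3$-branch vertices lying on $D(W_k)$ or in $V(W_k)$. Once this connectivity analysis is in place, the arithmetic bound $\funref{label_apreciadores}(z,x,p)={\cal O}(\sqrt{z}\cdot x+p)$ follows directly from the window count $\lceil\sqrt{z}\rceil^2\geq z$ and the chosen height of $W$.
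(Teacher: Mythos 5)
Your overall plan coincides with the paper's: peel off the first $p$ layers, pack roughly $\sqrt{z}\times\sqrt{z}$ many $x$-subwalls separated by thin buffers, and argue that the buffer prevents any internal bag from meeting two different influences; the height bound ${\cal O}(\sqrt{z}\cdot x+p)$ then falls out. The problem is that the step you yourself flag as ``the main subtlety'' is exactly the heart of the lemma, and your proposal does not actually prove it. The claim you offer in its support --- that the only bags which can absorb interior vertices of a flap $F\in{\sf influence}_{\frR}(W_k)$ are those whose seed $Q^{(i,j)}$ meets $\partial F$ --- is false as stated: the construction of $\tilde{\cal Q}$ expands bags greedily by adjacency to the \emph{current} (already enlarged) bag, not to the seed, and since a base vertex of $F$ need not be a wall vertex it can be shared with other flaps, so a bag can chain through several flaps via non-wall base vertices and acquire interior vertices of flaps whose bases its seed never touches. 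Ruling out that such chains cross a buffer (essentially, that a connected set of non-wall vertices cannot cross an $\frR$-normal cycle of the wall, because a perimetric cell has at most one base vertex off the cycle) is a genuine argument, and your ``case analysis'' is only announced, not carried out; nor is the width $c_1$ ever tied to a concrete property of the partition. The same gap infects your first bullet, where you assert without proof that flap vertices in the influence ``get assigned'' to bags of depth at least $p$.

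The paper closes this gap with two prepared facts that your proposal never invokes and would need substitutes for: (i) \autoref{label_grundfalscher}, which gives that the influences of vertex-disjoint subwalls are vertex-disjoint, so one can first pack $(x+2)$-subwalls with pairwise disjoint influences; and (ii) the recorded property that every internal bag of a $(W,\frR)$-canonical partition contains vertices of at most three bricks of $W$. Shrinking each packed $(x+2)$-subwall to its central $x$-subwall then makes the second bullet immediate: a bag meeting both shrunk influences would have to contain vertices of too many bricks, contradicting (ii), with no analysis of how the greedy expansion spreads. For the first bullet the paper likewise just takes the central subwall one extra layer in (height $r+2(p+1)$), so its influence avoids the first $p$ layers. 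Until you either import these facts or genuinely carry out the connectivity/case analysis you defer (including a correct localization statement replacing the false one), the proof is incomplete at its decisive point.
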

\begin{proof}
We set $r =\mathsf{odd}(\lceil \sqrt{l} \cdot (x+2)\rceil)$
and $\funref{label_apreciadores}(l,x,p)= r + 2(p+1).$
Recall that $W^{(r)}$ is the central $r$-subwall of $W.$
Since $W$ has height $r + 2(p+1),$ $W^{(r)}$ does not intersect the first $(p+1)$ layers of $W,$
and therefore $V(\cupall\mathsf{Influence}_\mathfrak{R} (W^{(r)}))$ does not contain
any vertex of the first $p$ layers of $W.$
This implies that $\cupall\mathsf{Influence}_\mathfrak{R} (W^{(r)})$
is a subgraph of $\cupall \{Z\mid Z \text{ is a $p$-internal bag of }\tilde{\mathcal{Q}}\}.$
Also, since $r \geq \lceil \sqrt{l} \cdot (x+2)\rceil,$
there exists a collection $\bar{\mathcal{W}}=\{\bar{W}_1, \ldots, \bar{W}_{l}\}$ of $(x+2)$-subwalls
of $W^{(r)}$ (that are also subwalls of $W$), such that for every $i \in [l],$
$\cupall\mathsf{Influence}_{\mathfrak{R}}(\bar{W}_i)$ is a subgraph
of $\cupall\mathsf{Influence}_\mathfrak{R} (W^{(r)})$ and
for every $i,j\in[l]$ with $i\neq j,$ the vertex set of $\cupall\mathsf{Influence}_\mathfrak{R} (\bar{W}_i)$
and the vertex set of $\cupall\mathsf{Influence}_\mathfrak{R} (\bar{W}_j)$ are disjoint.
To see why the latter holds, notice that, due to \autoref{label_grundfalscher},
there are no cells of $\mathfrak{R}$ that are both
$\bar{W}_i$-perimetric and $\bar{W}_j$-perimetric.

Notice now that there may exist an internal bag
$Z\in \tilde{\mathcal{Q}}$ such that $V(Z)$
intersects both $V(\cupall\mathsf{Influence}_\mathfrak{R} (\bar{W}_i))$ and
$V(\cupall\mathsf{Influence}_\mathfrak{R} (\bar{W}_j)),$ for some $i,j\in[l].$
To tackle this, for every $i\in[l]$ we set $W_i$
to be the central $x$-subwall of $\bar{W}_i$ and observe that,
for every $i,j\in[l]$ with $i\neq j,$
if there exists an internal bag $Z\in\tilde{\mathcal{Q}}$ such that $V(Z)$ intersects
both $V(\cupall\mathsf{Influence}_\mathfrak{R} (W_i))$ and $V(\cupall\mathsf{Influence}_\mathfrak{R} (W_j)),$
then $Z$ contains vertices of more than four bricks of $W,$
that is a contradiction to the definition of the canonical partition.
Thus, the collection $\mathcal{W}=\{W_1,\ldots, W_{l}\}$ is the desired one.
\end{proof}

\medskip

The next result provides the conditions to detect a vertex set that should necessarily intersect every set $S\subseteq V(G)$ of size at most $k$ such that $G\setminus S \in \mathbf{excl}(\mathcal{F}).$
In other words, given a graph $G,$ a set $A\subseteq V(G),$
and a $(W,\mathfrak{R})$-canonical partition $\tilde{\mathcal{Q}}$ of $G\setminus A,$
for some flatness pair $(W,\mathfrak{R})$ of $G\setminus A,$
we provide the conditions for a
set of vertices in $A$ with ``big enough'' degree with respect to the bags of $\tilde{\mathcal{Q}}$
to intersect every set $S\subseteq V(G)$ of size at most $k$ such that $G\setminus S \in \mathbf{excl}(\mathcal{F}).$
Recall that $a_{\mathcal{F}}$ is the minimum
apex number of a graph in $\mathcal{F}.$

In fact, we present an even more general formulation that will be needed in future work.
Namely,
instead of considering a set $S\subseteq V(G)$ of at most $k$ vertices such that $G\setminus S\in \mathbf{excl}(\mathcal{F}),$
we can be more ``flexible''
on the ``measure'' required on the set $S$, and we can ask, instead of $S$ having size at most $k$,  that $S$
intersects at most $k$ internal bags of every $(W,\mathfrak{R})$-canonical partition of $G\setminus A.$

\begin{lemma}\label{lemma_bidim_branch}
There exist three functions $\newfun{@prepossession}, \newfun{@proclamation}, \newfun{@engrandecerla}: \mathbb{N}^{3}\to \mathbb{N}$,
	such that if
	$\mathcal{F}$ is a finite collection of graphs,
	$G$ is a graph,
	$k\in\mathbb{N}$,
	$A$ is a subset of $V(G)$,
	$(W,\mathfrak{R})$ is a flatness pair of $G\setminus A$ of height at least $\funref{@prepossession}(a_{\mathcal{F}},s_{\mathcal{F}},k)$,
	$\tilde{\mathcal{Q}}$ is a $(W,\mathfrak{R})$-canonical partition of $G\setminus A$,
	$A'$ is a subset of vertices of $A$ that are adjacent, in $G$, to vertices of at least $\funref{@proclamation}(a_{\mathcal{F}},s_{\mathcal{F}},k)$ $\funref{@engrandecerla}(a_{\mathcal{F}},s_{\mathcal{F}},k)$-internal bags of $\tilde{\mathcal{Q}}$, and $|A'|\geq a_{\mathcal{F}}$,
	then for every set $S\subseteq V(G)$ such that $G\setminus S \in \mathbf{excl}({\mathcal{F}})$ and $S$ intersects at most $k$ internal bags of every $(W,\mathfrak{R})$-canonical partition of $G\setminus A$,
	it holds that $S\cap A'\neq\emptyset$.
	Moreover, $\funref{@prepossession}(a,s,k)=\mathcal{O}(2^a \cdot  s^{2} \cdot k^{2}),$
	$\funref{@proclamation}(a,s,k)=\mathcal{O}(2^a \cdot s^3 \cdot k^3)$, and $\funref{@engrandecerla}(a,s,k)=\mathcal{O}((a^2 +k)\cdot s)$, where $a=a_{\mathcal{F}}$ and $s= s_{\mathcal{F}}$.
\end{lemma}

The proof of \autoref{lemma_bidim_branch} is postponed to \autoref{label_nominalistic}.
The following corollary is an immediate consequence of~\autoref{lemma_bidim_branch}, since every set $S\subseteq V(G)$ of size at most $k$ clearly intersects at most $k$ internal bags of every $(W,\mathfrak{R})$-canonical partition of $G\setminus A.$

\begin{corollary}\label{label_significativos}
There exist three functions $\funref{@prepossession}, \funref{@proclamation},\funref{@engrandecerla}: \mathbb{N}^{3}\to \mathbb{N},$
such that if
$\mathcal{F}$ is a finite collection of graphs,
$G$ is a graph,
$k\in\mathbb{N},$
$A$ is a subset of $V(G),$
$(W,\mathfrak{R})$ is a flatness pair of $G\setminus A$ of height at least $\funref{@prepossession}(a_{\mathcal{F}},s_{\mathcal{F}},k),$
$\tilde{\mathcal{Q}}$ is a $(W,\mathfrak{R})$-canonical partition of $G\setminus A,$
$A'$ is a subset of vertices of $A$ that are adjacent, in $G,$ to vertices of at least $\funref{@proclamation}(a_{\mathcal{F}},s_{\mathcal{F}},k)$ $\funref{@engrandecerla}(a_{\mathcal{F}},s_{\mathcal{F}},k)$-internal bags of $\tilde{\mathcal{Q}},$ and $|A'|\geq a_{\mathcal{F}},$
then for every  set $S\subseteq V(G)$ of size at most $k$ such that $G\setminus S \in \mathbf{excl}(\mathcal{F})$
it holds that  $S\cap A'\neq\emptyset.$
Moreover, $\funref{@prepossession}(a,s,k)=\mathcal{O}(2^a \cdot  s^{2} \cdot k^{2}),$
$\funref{@proclamation}(a,s,k)=\mathcal{O}(2^a \cdot s^3 \cdot k^3),$ and $\funref{@engrandecerla}(a,s,k)=\mathcal{O}((a^2 +k)\cdot s),$ where $a=a_{\mathcal{F}}$ and $s= s_{\mathcal{F}}.$
\end{corollary}

We would like to comment that in the version of
 \autoref{label_significativos} used in~\cite{SauST21kapiII},
we write that $\funref{@prepossession}(a,s,k)=\mathcal{O}(2^a \cdot  s^{5/2} \cdot k^{5/2})$.
The ``improved'' function $\funref{@prepossession}$ that appears here
does not imply any improvement to the asymptotics of the running times of the algorithms in~\cite{SauST21kapiII}.

\subsection{Existence of an irrelevant wall inside a homogeneous flat wall}\label{label_reconquistasen}

The \textsl{irrelevant vertex technique} was introduced in~\cite{RobertsonS95XIII}
for providing an \textsf{FPT}-algorithm for the \textsc{Disjoint Paths} problem.
Moreover, this technique has appeared to be quite versatile and is now a standard tool
of parameterized algorithm design (see e.g.,~\cite{CyganFKLMPPS15para,ThilikosBDFM12grap}).

The fact that in the compass of a ``large enough'' homogeneous flat wall there exists a flat wall whose compass is irrelevant is asserted  by \autoref{label_acquaintances} and this subsection is devoted to its proof.
We first give some additional definitions and present a result that we derive from \cite{BasteST20acom}.
\medskip

Let $G$ be a graph and let $\ell\in \mathbb{N}.$
We say that a vertex set $X\subseteq V(G)$ is
	\emph{$\ell$-irrelevant} if every graph $H$ with detail at most $\ell$ that is a minor of $G$ is also a minor of $G\setminus X.$\smallskip

We state the following result from \cite{BasteST19hittIV} (see also \cite{BasteST20acom}).
In fact, \autoref{label_panlatinismo} is stated in \cite[Theorem 23]{BasteST19hittIV} for boundaried graphs. \autoref{label_panlatinismo} is derived by the same proof
if we consider graphs with empty boundary.

\begin{proposition}\label{label_panlatinismo}
	There exist two functions
	$\newfun{label_conversational}: \mathbb{N}^3\to\mathbb{N}$ and
	$\newfun{label_unbelievability}: \mathbb{N}^2\to\mathbb{N},$
	where the images of $\funref{label_conversational}$ are odd numbers,
	such that,
	for every $a,\ell\in\mathbb{N},$ every odd $q\in\mathbb{N}_{\geq 3},$ and  every graph $G,$
	if  $A\subseteq V(G),$ where $|A|\leq a,$ and  $(W,\mathfrak{R})$ is  a regular
	flatness pair of $G\setminus A$ of height at least $\funref{label_conversational}(a,\ell,q)$ that is $\funref{label_unbelievability}(a,\ell)$-homogeneous with respect to $A,$
	then the vertex set of the compass of every $W^{(q)}$-tilt of $(W,\mathfrak{R})$ is $\ell$-irrelevant.
	Moreover, it holds that $\funref{label_conversational}(a,\ell,q)=\mathcal{O}( (f_\mathsf{ul}(16a+12\ell))^3   + q)$
	and $\funref{label_unbelievability}(a,\ell)=a+\ell+3,$ where $f_\mathsf{ul}$ is the function of the Unique Linkage Theorem.
\end{proposition}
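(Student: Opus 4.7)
The plan is to deduce \autoref{label_panlatinismo} essentially for free from its boundaried analogue \cite[Theorem 5.2]{BasteST20acom}. Formally, view the host graph $G$ as the $0$-boundaried graph $(G,\emptyset,\emptyset)$ and leave the apex set $A$ and the flatness pair $(W,\frak{R})$ of $G\setminus A$ unchanged. Every hypothesis of \autoref{label_panlatinismo} --- the bound $|A|\leq a$, regularity of $(W,\frak{R})$, its height lower bound, and $\funref{label_unbelievability}(a,\ell)$-homogeneity with respect to $A$ --- is a special case of the corresponding hypothesis in \cite[Theorem 5.2]{BasteST20acom}: take all folios over boundaried graphs with empty boundary, so that $\ell$-folios of boundaried graphs specialise to the ordinary detail-$\ell$ minor relation, and the $(A,\funref{label_unbelievability}(a,\ell))$-palette as defined here matches the palette used in that theorem. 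The $\ell$-irrelevance of the compass of any $W^{(q)}$-tilt is then exactly the conclusion delivered by that theorem, and the two functions $\funref{label_conversational}$ and $\funref{label_unbelievability}$ can be taken to be the ones it provides; the shape $\funref{label_conversational}(a,\ell,q)={\cal O}((f_{\sf ul}(16a+12\ell))^{3}+q)$ tracks the Unique Linkage Theorem invocation inside that proof.

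For completeness I would also indicate why such a statement is plausible. Fix a minor $H$ of $G$ with ${\sf detail}(H)\leq\ell$, together with a minor model; the goal is to reroute the model off the vertex set $X$ of the compass of some $W^{(q)}$-tilt. The intersection of the model with a single flap $F\in{\sf flaps}_{\frak{R}}(W)$, combined with the way vertices of $A$ attach to $F$, is captured by the $\ell$-folio of the $A$-augmented flap $\textbf{F}^{A}$ at detail $\funref{label_unbelievability}(a,\ell)=a+\ell+3$. Homogeneity with respect to $A$ means that every internal brick of $W$ carries the same $(A,\funref{label_unbelievability}(a,\ell))$-palette, so any local role played by a flap inside $X$ can be replayed by a flap in an outer annular region of $W$.

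The actual rerouting then proceeds by substituting the pieces of the model lying deep in $X$ by isomorphic copies, sitting in the outer annulus and supplied by homogeneity, and by sewing these copies back into the rest of the model through disjoint paths running in the belt of $W$ separating $X$ from the boundary of the compass. The existence of such disjoint paths is the step that consumes an extra ${\cal O}((f_{\sf ul}(16a+12\ell))^{3})$ layers of $W$ on top of the $q$ central layers reserved for the $W^{(q)}$-tilt itself, and is precisely where the Unique Linkage Theorem is invoked. The main obstacle in the underlying argument is this global rerouting: one must simultaneously redirect every branch set of the model that meets $X$ while preserving connectivity of each branch set and all edges of $H$. Homogeneity produces the interchangeable local gadgets, the Unique Linkage Theorem guarantees that the required linkages coexist inside the annular belt, and the interplay of these two ingredients is what makes \autoref{label_panlatinismo} go through.
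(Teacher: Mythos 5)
Your proposal matches the paper: the paper itself obtains this proposition by observing that it is \cite[Theorem 5.2]{BasteST20acom} stated for boundaried graphs, and that the same proof applies when the boundary is empty, which is exactly your reduction. The additional heuristic sketch of the rerouting argument is fine but not needed, since the paper imports the result rather than reproving it.
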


Based on the above result, we prove that, given a graph $G,$ a set $A\subseteq V(G),$
and a ``big enough'' flatness pair $(W,\mathfrak{R})$ of $G\setminus A$ that is homogeneous with respect to $\binom{A}{\leq a},$ for some integer $a\leq |A|$,
there is a flatness pair that is a tilt of a central subwall of $W$
and its compass is ``irrelevant'' to the fact that $G\in \mathcal{A}_{k} (\mathbf{excl}(\mathcal{F})).$
In the following result, we demand homogeneity with respect to $\binom{A}{\leq a}$ although, for the proofs of this paper, it would suffice to demand homogeneity with respect to a set $\tilde{A}\in  \binom{A}{\leq a}$ that ``avoids'' every set $S\subseteq V(G)$ of at most $k$ vertices such that $|A\setminus S|\leq a$ and $G\setminus S\in \mathbf{excl}(\mathcal{F}).$
We insist on this more general formulation because it is used in~\cite{SauST21kapiII}.
In fact, we present an even more general formulation that will be needed in future work.
Namely,
instead of considering a set $S\subseteq V(G)$ of at most $k$ vertices such that $G\setminus S\in \mathbf{excl}(\mathcal{F}),$
we can be more ``flexible''
on the ``measure'' required on the set $S$, and we can ask, instead of $S$ having size at most $k$,  that $S$  intersects at most $k$ internal bags of every $(W,\mathfrak{R})$-canonical partition of $G\setminus A.$
Recall that $\ell_{\mathcal{F}}=\max\{\mathsf{detail}(H)\mid H\in{\mathcal{F}}\}.$

\begin{lemma}\label{lemma_biiid_irr}
There exists a function $\newfun{label_interpersonal}: \mathbb{N}^{4}\to \mathbb{N},$
whose images are odd numbers, such that given
$k, q, a\in\mathbb{N}$, with odd $q\geq 3$,
a finite collection $\mathcal{F}$ of graphs,
a graph $G$,
a subset $A\subseteq V(G)$,
and a regular flatness pair $(W,\mathfrak{R})$ of $G\setminus A$ of height at least $\funref{label_interpersonal}(a,\ell_{\mathcal{F}},q,k)$ that is $\funref{label_unbelievability}(a,\ell_{\mathcal{F}})$-homogeneous with respect to $\binom{A}{\leq a}$,
it holds that for every $W^{(q)}$-tilt $(W',\mathfrak{R}')$ of $(W,\mathfrak{R})$ and
for every set $S\subseteq V(G)$ that intersects at most $k$ internal bags of every $(W,\mathfrak{R})$-canonical partition of $G\setminus A$
and $|A\setminus S|\leq a$,
it holds that $G\setminus S\in\mathbf{excl}(\mathcal{F})$ if and only if $G\setminus (S\setminus V(\textsf{Compass}_{\mathfrak{R}'}(W')))\in\mathbf{excl}(\mathcal{F})$.
Moreover, $\funref{label_interpersonal}(a,\ell_{\mathcal{F}},q,k)=\mathcal{O}(k\cdot \funref{label_conversational}(a,\ell_{\mathcal{F}},q) ).$
\end{lemma}

\begin{proof}
	Let $z=\funref{label_conversational}(a,\ell_{\mathcal{F}},q),$ $r$ be the smallest odd integer that is not smaller than $(k+1)\cdot (z+2)+q,$ and $\funref{label_interpersonal}(a,\ell_{\mathcal{F}},q,k)=r.$
	We also set $d:= \funref{label_unbelievability}(a,\ell_{\mathcal{F}}).$
	Let $G$ be a graph, $A$ be a subset of $V(G)$ of size at most $a,$ and $(W,\mathfrak{R})$ be a regular flatness pair of $G\setminus A$ of height at least $\funref{label_interpersonal}(a,\ell_{\mathcal{F}},q,k)$ that is $d$-homogeneous
	with respect to $\binom{A}{\leq a}.$

	For every $i\in[r],$ we denote by $P_{i}$ (resp. $Q_{i}$) the $i$-th vertical (resp. horizontal) path of $W.$ Let $z'=\frac{z+1}{2}$
	and observe that, since the images of the function $\funref{label_conversational}$ of \autoref{label_panlatinismo} are always odd numbers, then $z'\in \mathbb{N}.$
	We also define, for every $i\in[k+1]$ the graph
	\[B_{i}:=\bigcup_{j\in [z'-1]}P_{j+(i-1)\cdot z'} \cup\bigcup_{j\in [z']} P_{j+(k+1-i)\cdot z'}\cup \bigcup_{j\in [z'-1]}Q_{j+(i-1)\cdot z'} \cup \bigcup_{j\in [z']} Q_{j+(k+1-i)\cdot z'}.\]
	For every $i\in[k+1],$ we define ${W}_{i}$ to be the graph obtained from $B_{i}$
	after repeatedly removing from $B_{i}$ all vertices of degree one (see \autoref{label_desasosegado} for an example).
	Since $z=2z'-1,$   for every $i\in[k+1]$  ${W}_{i}$ is a $z$-subwall of $W.$
	For every $i\in[k+1],$  we set $L^{i}_\mathsf{inn}$ to be the inner layer of $W_{i}.$
	Notice that $L^{i}_\mathsf{inn},$ for $i\in[k+1],$ and $D(W^{(q)})$ are $\mathfrak{R}$-normal cycles of $\mathsf{Compass}_{\mathfrak{R}}(W).$
We stress that, by the definition of $W_i$'s, for every $i\in[k+1]$, $V(W_i)$ does not intersect the vertices $(j\cdot z')$-th layer of $W$.
This implies that, for every $i\in[k+1]$ and for every   $(W,\mathfrak{R})$-canonical partition $\tilde{\mathcal{Q}}$ of $G\setminus A$, there is no bag of $\tilde{\mathcal{Q}}$ that intersects both a vertex of $L^{i}_\mathsf{inn}$ and of $\cupall\mathsf{Influence}_{\mathfrak{R}}(W_i)$.
Intuitively, this means that, in~\autoref{label_desasosegado}, there is no bag of any  $(W,\mathfrak{R})$-canonical partition of $G\setminus A$ that contains both a vertex of the innermost orange layer and a vertex of a flap containing vertices of the outermost blue layer.

	\begin{figure}[ht]
		\centering
		\scalebox{1}[1]{\includegraphics[width=11cm]{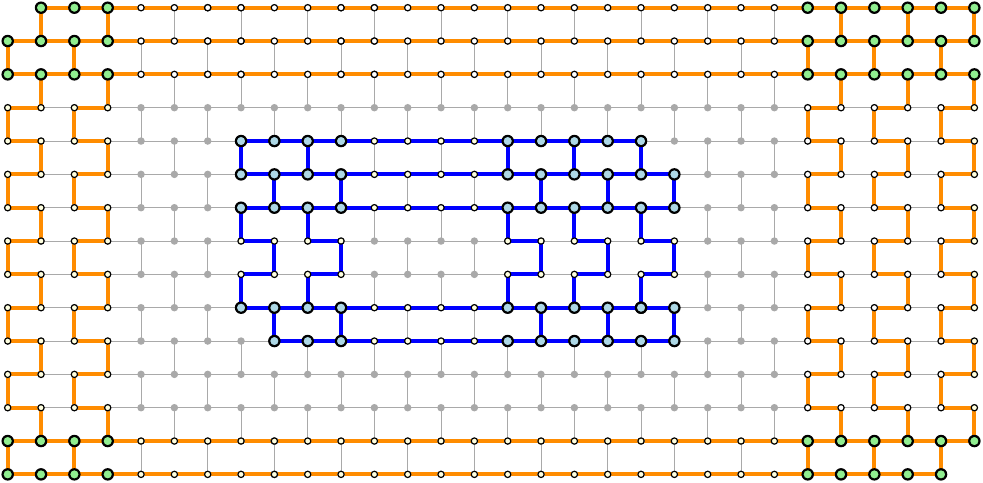}}
		\caption{A 15-wall and the 5-walls $W_1$ and $W_2$ as in the proof of  \autoref{lemma_biiid_irr}, depicted in orange and blue, respectively. The white vertices are subdivision vertices of the walls $W_1$ and $W_2.$}
		\label{label_desasosegado}
	\end{figure}
Let $\tilde{\mathcal{Q}}$ be a $(W,\mathfrak{R})$-canonical partition of $G\setminus A$.
	Let $(\hat{W},\hat{\mathfrak{R}})$ be a $W^{(q)}$-tilt of $(W,\mathfrak{R}).$
We set $Y:=V(\textsf{Compass}_{\hat{\mathfrak{R}}}(\hat{W}).$
	By the definition of a tilt of a flatness pair, it holds that $Y$ is a subgraph of $\cupall\mathsf{Influence}_{\mathfrak{R}} (W^{(q)}).$
	Moreover, for every $i\in[k],$ the fact that $r\geq (k+1)\cdot (z+2)+q$ implies that
	$\cupall\mathsf{Influence}_{\mathfrak{R}} (W^{(q)})$ is a subgraph of $\cupall\mathsf{Influence}_{\mathfrak{R}} (L_{\mathsf{inn}}^{i}).$
	Hence, for every $i\in[k+1],$ we have that
$Y\mbox{~is a subgraph of~}\cupall\mathsf{Influence}_{\mathfrak{R}}(L_{\mathsf{inn}}^{i}).$

Let $S\subseteq V(G)$ that intersects at most $k$ internal bags of $\tilde{\mathcal{Q}}$.
We aim to prove that $G\setminus S\in \mathbf{excl}(\mathcal{F})$ if and only if $G\setminus (S\setminus Y) \in  \mathbf{excl}(\mathcal{F}).$
It is easy to see that if $G\setminus (S\setminus Y) \in  \mathbf{excl}(\mathcal{F})$, then $G\setminus S\in \mathbf{excl}(\mathcal{F})$.
So, it remains to prove that, if $G\setminus S\in \mathbf{excl}(\mathcal{F}),$ then $G\setminus (S\setminus Y) \in  \mathbf{excl}(\mathcal{F}).$

	Suppose, towards a contradiction, that $\mathcal{F}\prem {G\setminus (S\setminus Y)}$
	and let $H$ be a graph in $\mathcal{F}$ that is a minor of ${G\setminus (S\setminus Y)}.$
	For every $i\in[k+1],$ let $(W_{i}',\mathfrak{R}_{i}')$ be a flatness pair of $G\setminus A$ that is a $W_{i}$-tilt of $(W,\mathfrak{R})$ ({which exists due to \autoref{label_prosperously}})
	and keep in mind that $W_i '$ has height $z.$
	Also, note that, for every $i\in[k+1],$  $L_{\mathsf{inn}}^i$ is the inner layer of $W_{i}'$ and therefore it is
	an $\mathfrak{R}_i^\prime$-normal cycle of $\mathsf{Compass}_{\mathfrak{R}_i^\prime}(W_i^\prime).$
	Additionally, for every $i\in[k+1],$ $(W_{i}',\mathfrak{R}_{i}')$ is
	$d$-homogeneous with respect to $2^A$ due to \autoref{label_convenciones}, and, due to \autoref{label_expressionism}, $(W_{i}',\mathfrak{R}_{i}')$ is also regular.

	For every $i\in[k+1],$ we set
	$D_{i} := V(\textsf{Compass}_{\mathfrak{R}_i '}(W_i '))\setminus V(\cupall\mathsf{Influence}_{\mathfrak{R}_i '}(L_\mathsf{inn}^{i}))$
	and observe that, since every flap in $\mathsf{Flaps}_{\mathfrak{R}_i '}(W_i ')$ belongs to $\mathsf{Influence}_{\mathfrak{R}_{i-1} '}(W_{i-1} ')$ and $\mathsf{Influence}_{\mathfrak{R}_{i-1} '}(W_{i-1} ')\subseteq \mathsf{Influence}_{\mathfrak{R}_i '}(L_\mathsf{inn}^{i}),$ the vertex sets $D_i,$ $i\in[k+1]$ are pairwise disjoint and no bag of $\tilde{\mathcal{Q}}$ intersects both $D_i$ and $D_j$, for every $i,j\in[k+1], i\neq j.$
	Therefore, since $S$ intersects at most $k$ internal bags of $\tilde{\mathcal{Q}}$, there exists a $j\in [k+1]$ such that
	$S\cap D_j=\emptyset.$
	We set $S_\mathsf{in}:= S\cap V(\textsf{Compass}_{\mathfrak{R}_j '}(W_j '))$ and $S_{\mathsf{out}}:=S\setminus S_\mathsf{in}$ and observe that $S_\mathsf{in}\subseteq V(\cupall\mathsf{Influence}_{\mathfrak{R}_j '}(L_\mathsf{inn}^j)),$ while $S_{\mathsf{out}}\cap V(\textsf{Compass}_{\mathfrak{R}_j '}(W_j '))=\emptyset.$
	It is also easy to see that $S\cap Y \subseteq S_\mathsf{in}$ and therefore $S_{\mathsf{out}}\subseteq S\setminus Y.$
	
Also, we set $A':= A\setminus S_{\mathsf{out}}.$
Since $(W_i ',\mathfrak{R}_i ')$ is $d$-homogeneous with respect to $\binom{A}{\leq a}$
and $|A'|= |A\setminus S_{\mathsf{out}}|\leq a$, and therefore
$A'\in \binom{A}{\leq a}$,
it follows that $(W_i ',\mathfrak{R}_i ')$ is $d$-homogeneous with respect to $A'.$
Since $S_{\mathsf{out}}\cap V(\textsf{Compass}_{\mathfrak{R}_j '}(W_j '))=\emptyset,$
by removing the vertices of $S_{\mathsf{out}}$ from $G,$
	we obtain a flatness pair $(W_j ',\mathfrak{R}_j '')$ of $(G\setminus S_{\mathsf{out}}) \setminus A',$
	where, if $\mathfrak{R}_j ' = (X,Y,P,C,\Gamma,\sigma,\pi),$ then $\mathfrak{R}_j ''$ is obtained from $\mathfrak{R}_j '$ by removing the set $S_{\mathsf{out}}$ from $X.$
	Notice that the $\mathfrak{R}_j''$-compass and the $\mathfrak{R}_j '$-compass of $W_j '$ are identical,
	which implies that $(W_j ',\mathfrak{R}_j '')$ is a regular
	flatness pair of $(G\setminus S_{\mathsf{out}}) \setminus A'$ that is $d$-homogeneous with respect to $A'.$
	Also, recall that $W_j '$ has height $z = \funref{label_conversational}(a,\ell_{\mathcal{F}},q).$

	We are now in position to apply \autoref{label_panlatinismo} on $G\setminus S_{\mathsf{out}},$
	$A',$ and $({W}_{j}',\mathfrak{R}_{j}''),$
	which implies that for every $W_j^{\prime (q)}$-tilt $(\tilde{W},\tilde{\mathfrak{R}})$
	of $({W}_{j}',\mathfrak{R}_{j}''),$
	the vertex set of $\mathsf{Compass}_{\tilde{\mathfrak{R}}}(\tilde{W})$ is $\ell_{\mathcal{F}}$-irrelevant.
	Observe that since $H$ is a minor of $G\setminus (S\setminus Y)$ and $S_{\mathsf{out}}\subseteq S\setminus Y,$ $H$ is also a minor of $G\setminus S_{\mathsf{out}}.$
	This, in addition to the fact that 
	the vertex set of $\mathsf{Compass}_{\tilde{\mathfrak{R}}}(\tilde{W})$ is $\ell_{\mathcal{F}}$-irrelevant
	and $H$ has detail at most $\ell_{\mathcal{F}},$
	implies that $H$ is also a minor of $G\setminus (S_{\mathsf{out}} \cup V(\textsf{Compass}_{\tilde{\mathfrak{R}}}(\tilde{W}))).$
	
	Also, it is easy to observe that
$\cupall\mathsf{Influence}_{\mathfrak{R}_{j}'}(L^{j}_\mathsf{inn})\mbox{~is a subgraph of~}\textsf{Compass}_{\tilde{\mathfrak{R}}}(\tilde{W}).$
	Using the fact that
	$S_\mathsf{in}\subseteq V(\cupall\mathsf{Influence}_{\mathfrak{R}_j '}(L_\mathsf{inn}^j))\subseteq V(\textsf{Compass}_{\tilde{\mathfrak{R}}}(\tilde{W})),$
	we derive that $H$ is a minor of $G\setminus (S\cup V(\textsf{Compass}_{\tilde{\mathfrak{R}}}(\tilde{W}))).$
	Therefore $H$ is a minor of $G\setminus S,$
	which contradicts the initial assumption that $G\setminus S\in \mathbf{excl}(\mathcal{F}).$
\end{proof}

The following corollary is an immediate consequence of~\autoref{lemma_biiid_irr}, since every set $S\subseteq V(G)$ of size at most $k$ clearly intersects at most $k$ internal bags of every $(W,\mathfrak{R})$-canonical partition of $G\setminus A.$

\begin{corollary}\label{label_acquaintances}
	There exists a function $\funref{label_interpersonal}: \mathbb{N}^{4}\to \mathbb{N},$
	{whose images are odd numbers,} such that for every $a,k\in\mathbb{N},$ every odd $q\in\mathbb{N}_{\geq 3},$ and every graph $G,$
	if $A$ is a subset of $V(G)$ of size at most $a$
	and
	$(W,\mathfrak{R})$ is a regular flatness pair of $G\setminus A$ of height at least $\funref{label_interpersonal}(a,\ell_{\mathcal{F}},q,k)$ that is  $\funref{label_unbelievability}(a,\ell_{\mathcal{F}})$-homogeneous
	with respect to $2^A,$
	then for every  $W^{(q)}$-tilt $(\hat{W},\hat{\mathfrak{R}})$ of $(W,\mathfrak{R}),$
	it holds that  $G\in \mathcal{A}_{k} (\mathbf{excl}(\mathcal{F}))$ if and only if $G\setminus V(\textsf{Compass}_{\hat{\mathfrak{R}}}(\hat{W}))\in  \mathcal{A}_{k} (\mathbf{excl}(\mathcal{F})).$
	Moreover, $\funref{label_interpersonal}(a,\ell_{\mathcal{F}},q,k)=\mathcal{O}(k\cdot \funref{label_conversational}(a,\ell_{\mathcal{F}},q) ).$
\end{corollary}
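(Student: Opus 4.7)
The plan is to deduce this corollary as an immediate specialization of Lemma~\ref{label_gen_acquaintances}, taking the function $\funref{label_interpersonal}$ in the corollary statement to be precisely the one furnished by the lemma. All structural hypotheses on $G$, $A$, the regular flatness pair $(W,\frR)$, its height, and its $\funref{label_unbelievability}(a,\ell_{\cal F})$-homogeneity with respect to $2^A$ then transfer verbatim, as does the quantification over $W^{(q)}$-tilts.

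The single observation to make is that every set $S\subseteq V(G)$ with $|S|\leq k$ automatically intersects at most $k$ internal bags of every $(W,\frR)$-canonical partition of $G\setminus A$. Indeed, the internal bags of any such partition are pairwise disjoint subsets of $V(G\setminus A)$, and vertices of $A$ lie in no bag at all; consequently each vertex of $S$ is counted by at most one internal bag, so the total number of internal bags met by $S$ is bounded above by $|S|\leq k$. This is exactly the set-condition imposed on $X$ in the hypothesis of Lemma~\ref{label_gen_acquaintances}, so the lemma applies whenever $|X|\leq k$.

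For the forward implication, I would take any witness $S$ for $G\in {\cal A}_k({\bf exc}({\cal F}))$ and apply Lemma~\ref{label_gen_acquaintances} with $X:=S$ to obtain $G\setminus (S\setminus V({\sf compass}_{\hat{\frR}}(\hat{W})))\in {\bf exc}({\cal F})$. Setting $T:=S\setminus V({\sf compass}_{\hat{\frR}}(\hat{W}))$ yields a subset of $V(G)\setminus V({\sf compass}_{\hat{\frR}}(\hat{W}))$ of size at most $k$; because $(G\setminus V({\sf compass}_{\hat{\frR}}(\hat{W})))\setminus T=G\setminus (V({\sf compass}_{\hat{\frR}}(\hat{W}))\cup T)$ is a subgraph of $G\setminus T$ and ${\bf exc}({\cal F})$ is closed under subgraphs, $T$ exhibits $G\setminus V({\sf compass}_{\hat{\frR}}(\hat{W}))\in {\cal A}_k({\bf exc}({\cal F}))$. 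The reverse implication follows from the analogous instantiation of the equivalence supplied by the lemma applied to the witness provided by $G\setminus V({\sf compass}_{\hat{\frR}}(\hat{W}))\in {\cal A}_k({\bf exc}({\cal F}))$.

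Since the whole argument is simply a bookkeeping invocation of the already-established Lemma~\ref{label_gen_acquaintances}, no real obstacle arises; the only point worth verifying explicitly is the routine bag-intersection count above, which makes the more general hypothesis on $X$ trivially true for any hitting set of cardinality at most $k$.
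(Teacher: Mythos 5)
Your derivation follows the same route as the paper, whose entire justification for \autoref{label_acquaintances} is the same one-line remark (immediate consequence of \autoref{label_gen_acquaintances} because a set of size at most $k$ meets at most $k$ internal bags). Your forward implication is fine---indeed it is the trivial direction, since ${\cal A}_k({\bf exc}({\cal F}))$ is minor-closed and $G\setminus V({\sf compass}_{\hat{\frR}}(\hat{W}))$ is an induced subgraph of $G$. The gap is in the reverse implication, which is the only direction with real content and the one the paper later relies on (via \autoref{label_descalabrado} in the proof of \autoref{label_transfiguration}). Write $Y=V({\sf compass}_{\hat{\frR}}(\hat{W}))$ and let $T$ witness $G\setminus Y\in{\cal A}_k({\bf exc}({\cal F}))$, so $T\cap Y=\emptyset$, $|T|\leq k$, and $G\setminus(T\cup Y)\in{\bf exc}({\cal F})$; to conclude $G\in{\cal A}_k({\bf exc}({\cal F}))$ you need $G\setminus T\in{\bf exc}({\cal F})$. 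The ``analogous instantiation'' you invoke, namely $X:=T$, yields only the tautology $G\setminus T\in{\bf exc}({\cal F})\Leftrightarrow G\setminus(T\setminus Y)\in{\bf exc}({\cal F})$, because $T\setminus Y=T$. The instantiation that would give what is needed is $X:=T\cup Y$, but that set is not covered by the hypothesis of \autoref{label_gen_acquaintances}: the compass of a $W^{(q)}$-tilt meets many internal bags of the canonical partition (it contains whole central bricks of $W$), so $T\cup Y$ may meet far more than $k$ of them. A sanity check is $k=0$: then the only admissible $X$ avoid all internal bags and hence are disjoint from $Y$, so the lemma is vacuous, while the corollary still asserts the non-trivial fact that $G$ and $G\setminus Y$ have the same ${\cal F}$-minor-freeness. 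Hence the corollary is not a purely formal consequence of the statement of \autoref{label_gen_acquaintances} together with your bag count.

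What closes the reverse direction is the proof of \autoref{label_gen_acquaintances} rather than its statement. With $X:=T$ one chooses, exactly as there, an index $j$ with $T\cap D_j=\emptyset$, sets $T_{\sf out}:=T\setminus V({\sf compass}_{\frR_j'}(W_j'))$, notes that any $H\in{\cal F}$ with $H\prem G\setminus T$ also satisfies $H\prem G\setminus T_{\sf out}$, and applies \autoref{label_panlatinismo} in $G\setminus T_{\sf out}$ to reroute $H$ off the compass of the tilt $\tilde{W}$; the containments established in that proof show this compass contains both $T\cap V({\sf compass}_{\frR_j'}(W_j'))$ and $Y$ (the latter since $Y\subseteq V(\cupall{\sf influence}_{\frR}(W^{(q)}))$), so $H\prem G\setminus(T\cup Y)$, contradicting the choice of $T$. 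Equivalently, the same argument proves the stronger equivalence $G\setminus X\in{\bf exc}({\cal F})\Leftrightarrow G\setminus(X\cup Y)\in{\bf exc}({\cal F})$ for every $X$ meeting at most $k$ internal bags, and that version does give the corollary immediately by taking $X:=T$ and $X:=S$. So either cite such a strengthened form or rerun the lemma's argument; as written, your reverse step does not follow from the lemma you quote.
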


We will use a ``light'' version of \autoref{label_acquaintances}, namely for $q=3.$
Combining it with \autoref{label_surreptitiously} we obtain a central vertex $v\in V(G)$ of $W^{(3)}$ such that $G\in \mathcal{A}_{k} (\mathbf{excl}(\mathcal{F}))$ if and only if $G\setminus v\in  \mathcal{A}_{k} (\mathbf{excl}(\mathcal{F})).$

\begin{corollary}\label{label_descalabrado}
	Let $a,k\in\mathbb{N},$ $G$ be a graph,
	$A$ be a subset of $V(G)$ of size at most $a,$
	and
	$(W,\mathfrak{R})$ be a  regular flatness pair of $G\setminus A$ of height at least $\funref{label_interpersonal}(a,\ell_{\mathcal{F}},3,k)$  that is $\funref{label_unbelievability}(a,\ell_{\mathcal{F}})$-homogeneous with respect to $2^A.$
	If $v$ is a central vertex of  $W,$ then $G\in \mathcal{A}_{k} (\mathbf{excl}(\mathcal{F}))$ if and only if $G\setminus v\in  \mathcal{A}_{k} (\mathbf{excl}(\mathcal{F})).$
\end{corollary}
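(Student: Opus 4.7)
The statement follows almost immediately by combining \autoref{label_acquaintances} (instantiated at $q=3$), \autoref{label_prosperously}, and \autoref{label_surreptitiously}, together with the fact that ${\cal A}_k({\bf exc}({\cal F}))$ is minor-closed (which is noted earlier in the paper). The plan is to first produce a suitable tilt whose compass contains the distinguished central vertex $v$, then invoke the ``irrelevance'' given by \autoref{label_acquaintances}, and finally sandwich $G\setminus v$ between two graphs that are known to be equivalent with respect to membership in ${\cal A}_k({\bf exc}({\cal F}))$.

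More concretely, I would proceed as follows. By \autoref{label_prosperously} applied to the subwall $W^{(3)}\in{\cal S}_{\frR}(W)$, there exists a flatness pair $(\hat{W},\hat{\frR})$ of $G\setminus A$ that is a $W^{(3)}$-tilt of $(W,\frR)$. Since the height of $(W,\frR)$ is at least $\funref{label_interpersonal}(a,\ell_{\cal F},3,k)$ and $(W,\frR)$ is $\funref{label_unbelievability}(a,\ell_{\cal F})$-homogeneous with respect to $2^A$, the hypotheses of \autoref{label_acquaintances} are satisfied with $q=3$, giving
\[
G\in {\cal A}_k({\bf exc}({\cal F})) \iff G\setminus V({\sf compass}_{\hat{\frR}}(\hat{W}))\in {\cal A}_k({\bf exc}({\cal F})).
\]

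Next, observe that a central vertex of $W$ is also a central vertex of $W^{(3)}$: indeed, $W^{(3)}$ is obtained from $W$ by peeling off its outer layers, so the two ``central'' branch vertices of $W$ (those belonging to no layer of $W$) are precisely the two central branch vertices of $W^{(3)}$. Hence, by \autoref{label_surreptitiously}, we have $v\in V({\sf compass}_{\hat{\frR}}(\hat{W}))$. Setting $Y:=V({\sf compass}_{\hat{\frR}}(\hat{W}))$, this means $G\setminus Y$ is (isomorphic to) a subgraph of $G\setminus v$, and in particular a minor of $G\setminus v$.

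To conclude, the forward direction ``$G\in {\cal A}_k({\bf exc}({\cal F}))\Rightarrow G\setminus v\in {\cal A}_k({\bf exc}({\cal F}))$'' is immediate from the fact that ${\cal A}_k({\bf exc}({\cal F}))$ is minor-closed. For the reverse direction, assume $G\setminus v\in {\cal A}_k({\bf exc}({\cal F}))$. Since $G\setminus Y$ is a minor of $G\setminus v$ and ${\cal A}_k({\bf exc}({\cal F}))$ is minor-closed, we get $G\setminus Y\in {\cal A}_k({\bf exc}({\cal F}))$, and applying the equivalence above yields $G\in {\cal A}_k({\bf exc}({\cal F}))$. There is no real obstacle here; the only point to be slightly careful about is verifying that the central vertices of $W$ actually lie in the compass of a $W^{(3)}$-tilt, which is precisely what \autoref{label_surreptitiously} was designed to ensure.
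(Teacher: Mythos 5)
Your proposal is correct and follows exactly the route the paper intends: the paper derives this corollary by applying \autoref{label_acquaintances} with $q=3$ and combining it with \autoref{label_surreptitiously} (the tilt existing by \autoref{label_prosperously}), using that a central vertex of $W$ is a central vertex of $W^{(3)}$ and that ${\cal A}_k({\bf exc}({\cal F}))$ is minor-closed. Your write-up just makes explicit the sandwiching step that the paper leaves implicit.
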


\subsection{Bounding the treewidth of an obstruction}\label{label_informaremos}
We conclude this section by proving the following result that will be useful in the proof of \autoref{label_miseryfrightened}.

\begin{lemma}\label{label_transfiguration}
	There exists a function $\newfun{label_explicacions}:\mathbb{N}^2\to\mathbb{N}$ such that if
	$\mathcal{F}$ is a finite collection of graphs
	and
	$G\in\mathbf{obs}(\mathcal{A}_k (\mathbf{excl}(\mathcal{F}))),$
	then $\mathsf{tw}(G)\leq \funref{label_explicacions}(a_{\mathcal{F}},s_{\mathcal{F}},k).$
	Moreover, $\funref{label_explicacions}(a, s,k)=2^{\log( k \cdot c) \cdot2^{k^{a -1}\cdot   2^{\mathcal{O}(s^2\log s)}}},$
	where $a=a_{\mathcal{F}},$ $s= s_{\mathcal{F}},$ $c=f_{\mathsf{ul}} (s^2),$
	and $f_{\mathsf{ul}}$ is the function of the Unique Linkage Theorem.
\end{lemma}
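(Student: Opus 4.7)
I argue by contradiction, following the two-stage irrelevant-vertex blueprint sketched in the paper's introduction. Assume $\tw(G)>\funref{label_explicacions}(a_{\cal F},s_{\cal F},k)$. Since $G\in{\bf obs}({\cal A}_k({\bf exc}({\cal F})))$, deleting any single vertex of $G$ produces a graph in ${\cal A}_k({\bf exc}({\cal F}))$, so $G$ itself admits an ${\cal F}$-hitting set $R$ of size at most $k+1$. Consequently $\tw(G\setminus R)\geq\tw(G)-k-1$ is still huge, and since every graph of ${\cal F}$ has at most $s:=s_{\cal F}$ vertices (hence is a minor of $K_s$), the graph $G\setminus R$ excludes $K_s$ as a minor. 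I apply the Flat Wall Theorem (\autoref{label_aldobrandesco}) on $G\setminus R$ and obtain a set $A_{\sf fw}\subseteq V(G\setminus R)$ with $|A_{\sf fw}|\leq s-5$ and a flatness pair $(W,\frR)$ of $G\setminus(R\cup A_{\sf fw})$ of height roughly $\tw(G)/{\cal O}(s^2)$. Writing $M:=R\cup A_{\sf fw}$ so that $|M|\leq k+s-4$, I fix a $(W,\frR)$-canonical partition $\tilde{\cal Q}$ of $G\setminus M$ and designate $Q\subseteq M$ as the set of vertices adjacent in $G$ to at least $\funref{label_compensating}(a_{\cal F},s,k)$ many $\funref{label_entretenerle}(a_{\cal F},s,k)$-internal bags of $\tilde{\cal Q}$. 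By construction, each vertex of $M\setminus Q$ has neighbors in fewer than $\funref{label_compensating}(a_{\cal F},s,k)$ bags.

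The crux of the argument is to manufacture a subwall $W'$ of $W$ with two decisive properties: (i) no vertex of $M\setminus Q$ has any neighbor in the compass $K'$ of $W'$, and (ii) $(W',\frR')$ is $\ell_{\cal F}$-homogeneous with respect to $\binom{Q}{\leq a_{\cal F}-1}$. For (i), I apply \autoref{label_overestimation} to pack many pairwise bag-disjoint subwalls of $W$, and a pigeonhole against $M\setminus Q$ (each of whose vertices intersects at most $\funref{label_compensating}-1$ of the packed compasses) singles out a subwall $W_1$ with the desired property. Then, viewing a $W_1$-tilt of $(W,\frR)$ as a flatness pair of $G\setminus Q$ (possible precisely because of (i)), I feed it into \autoref{label_transportado} with the flap-coloring $\zeta_{Q,a_{\cal F}-1,\ell_{\cal F}}$ from \autoref{label_anticipation}, extracting a subwall $W'$ of height $\geq\funref{label_interpersonal}(a_{\cal F},\ell_{\cal F},3,k)$ whose every $W'$-tilt is $\ell_{\cal F}$-homogeneous with respect to $\binom{Q}{\leq a_{\cal F}-1}$. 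Finally, \autoref{label_aberenjenado} lets me assume $(W',\frR')$ is regular; property (ii) survives this by \autoref{label_convenciones}, while (i) is retained by construction.

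Let $v$ be a central vertex of $W'$; note $v\notin Q$ since $v\in V(K')\subseteq V(G\setminus M)$. As $G\setminus v\in{\cal A}_k({\bf exc}({\cal F}))$, there is $S\subseteq V(G\setminus v)$ with $|S|\leq k$ and $G\setminus(\{v\}\cup S)\in{\bf exc}({\cal F})$, so $\{v\}\cup S$ is a hitting set of $G$ of size $\leq k+1$. Invoking \autoref{label_significativos} with $A':=Q\setminus S$ and parameter $k+1$ in place of $k$ (absorbing the increment into the polynomial parameters of the lemma), I conclude $|Q\setminus S|\leq a_{\cal F}-1$: otherwise $Q\setminus S$ would satisfy the hypotheses of the lemma and yet be disjoint from $\{v\}\cup S$, a contradiction. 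Setting $A^{\star}:=Q\setminus S$ yields $|A^{\star}|\leq a_{\cal F}-1\leq a_{\cal F}$, and by (ii) the pair $(W',\frR')$ is $\ell_{\cal F}$-homogeneous with respect to $2^{A^{\star}}\subseteq\binom{Q}{\leq a_{\cal F}-1}$. Using (i) to absorb $M\setminus Q$ into the outside of the separation, I adapt $(W',\frR')$ into a regular flatness pair of $G\setminus A^{\star}$ of sufficient height, and \autoref{label_descalabrado} (with $a=a_{\cal F}$) now yields $G\in{\cal A}_k({\bf exc}({\cal F}))\iff G\setminus v\in{\cal A}_k({\bf exc}({\cal F}))$. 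Since the right-hand side holds, so does the left, contradicting $G\in{\bf obs}({\cal A}_k({\bf exc}({\cal F})))$.

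Unwinding the heights: \autoref{label_descalabrado} requires wall-height ${\cal O}(k\cdot f_{\sf ul}(a_{\cal F}+\ell_{\cal F})^3)$, \autoref{label_transportado} inflates this by $\funref{label_exiieriences}(|Q|,a_{\cal F}-1,\ell_{\cal F})\leq 2^{|Q|^{a_{\cal F}-1}\cdot 2^{{\cal O}(\ell_{\cal F}\log\ell_{\cal F})}}$ in the exponent, the subwall packing contributes a polynomial in $k$ and $s$, and the Flat Wall Theorem multiplies by ${\cal O}(s^2)$. Substituting $\ell_{\cal F}\leq s^2$, $a_{\cal F}\leq s$, and $|Q|\leq|M|={\cal O}(k+s)$ crystallizes the claimed bound $2^{\log(k\cdot c)\cdot 2^{k^{a-1}\cdot 2^{{\cal O}(s^2\log s)}}}$ with $c=f_{\sf ul}(s^2)$. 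The main technical hurdle is step (ii) together with the transfer of $(W',\frR')$ from a flatness pair of $G\setminus M$ to one of $G\setminus A^{\star}$: (i) is tailored precisely to enable this transfer for $M\setminus Q$, and the family $\binom{Q}{\leq a_{\cal F}-1}$ in (ii) is calibrated so that every possible $A^{\star}$ that can emerge from some hitting set $S$ is already covered by the homogeneity. Its cardinality $|M|^{a_{\cal F}-1}$ is exactly what produces the $k^{a-1}$ factor in the exponent of the bound, so that in the apex case $a_{\cal F}=1$ the family trivializes and the dependence on $k$ collapses from double- to single-exponential.
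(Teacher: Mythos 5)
Your overall architecture matches the paper's (hitting set of size $k+1$, Flat Wall Theorem, the set $Q$ of high-bag-degree apices, a packed wall avoiding $N_G(M\setminus Q)$, homogeneity with respect to the $(a_{\cal F}-1)$-subsets, \autoref{label_significativos} to bound $|Q\setminus S|$, and an irrelevant central vertex), and the reordering pack-then-homogenize versus the paper's homogenize-then-pack is harmless. However, the last step has a genuine gap. You set $A^{\star}:=Q\setminus S$ and claim that, ``using (i) to absorb $M\setminus Q$ into the outside of the separation,'' you can turn $(W',\frR')$ into a regular flatness pair of $G\setminus A^{\star}$ and then apply \autoref{label_descalabrado} to $G$ with apex set $A^{\star}$ and budget $k$. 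But property (i) only controls the vertices of $M\setminus Q$; it says nothing about the vertices of $Q\cap S$. These vertices are neither in your apex set $A^{\star}$ nor deleted, they remain in $G\setminus A^{\star}$, and—being exactly the vertices with many neighbors among the internal bags—they may well have neighbors deep inside the compass of $W'$. Hence $W'$ need not be a flat wall of $G\setminus A^{\star}$ at all (and the homogeneity you established was computed with respect to augmented flaps of a flatness pair of $G\setminus Q$, not of $G\setminus A^{\star}$), so \autoref{label_descalabrado} cannot be invoked as stated. The repair is precisely the paper's move: split $Q$ (there $A^{\star}$) into the hit part $Q\cap S$ and the free part $Q\setminus S$, \emph{delete} $Q\cap S$ from the graph, lower the budget to $k-|Q\cap S|$, keep as apex set only the compass-adjacent part of $Q\setminus S$, and apply \autoref{label_descalabrado} to $G\setminus(Q\cap S)$; since any hitting set of $(G\setminus(Q\cap S))\setminus v$ of size $k-|Q\cap S|$ extends to one of $G\setminus v$ of size $k$ by adding $Q\cap S$ back, this still yields $G\in{\cal A}_k({\bf exc}({\cal F}))$ and the desired contradiction. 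Without deleting $Q\cap S$ there is no flatness pair to feed into \autoref{label_descalabrado}, and no amount of homogeneity compensates for that.

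A secondary, fixable, inaccuracy: you apply \autoref{label_aberenjenado} at the very end and assert that homogeneity ``survives this by \autoref{label_convenciones}.'' \autoref{label_convenciones} concerns tilts of subwalls of a fixed flatness pair; it does not transfer homogeneity to the (possibly unrelated) regular pair produced by \autoref{label_aberenjenado}. The correct order, as in the paper, is to regularize right after the Flat Wall Theorem (or right after you re-view the $W_1$-tilt as a flatness pair of $G\setminus Q$) and then rely on \autoref{label_expressionism} so that all subsequent tilts are automatically regular, while \autoref{label_convenciones} preserves homogeneity along tilts. Finally, for the bookkeeping to go through you should define $Q$ with the thresholds $\funref{label_compensating}(a_{\cal F},s_{\cal F},k+1)$ and $\funref{label_entretenerle}(a_{\cal F},s_{\cal F},k+1)$ (and wall height at least $\funref{label_contribueroit}(a_{\cal F},s_{\cal F},k+1)$), since \autoref{label_significativos} is invoked against the $(k+1)$-sized set $S\cup\{v\}$; you gesture at this but it should be made explicit.
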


In order to prove \autoref{label_transfiguration}, we also need the following result of Kawarabayashi  and  Kobayashi~\cite{KawarabayashiK20line}, that
provides a \textsl{linear} relation between the treewidth and the height of a largest wall in a minor-free graph.

\begin{proposition}\label{label_wahrscheinlichkeitssatz}
	There exists a function $\newfun{label_aromatiserez}:\mathbb{N}\to \mathbb{N}$ such that, for every $t,r\in \mathbb{N}$ and every
	graph $G$ that does not contain $K_{t}$ as a minor,
	if $\mathsf{tw}(G)\geq \funref{label_aromatiserez}(t)\cdot r$ then $G$ contains an $r$-wall.
	In particular, one may choose $\funref{label_aromatiserez}(t)=2^{\mathcal{O}(t^{2}\log t)}.$
\end{proposition}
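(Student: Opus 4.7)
The plan is to prove the contrapositive: every $K_t$-minor-free graph $G$ with no $r$-wall has treewidth at most $f(t)\cdot r$, where $f(t)=2^{O(t^2\log t)}$. The strategy exploits the Graph Minor Structure Theorem to reduce the problem to almost-embeddable pieces, where a linear grid-versus-treewidth relation is available. First, I would apply a quantitative version of Robertson and Seymour's structure theorem for $K_t$-minor-free graphs: $G$ admits a tree decomposition $(T,\beta)$ of adhesion $\alpha(t)$ in which each torso, after removal of an apex set of size $a(t)$, embeds in a surface of Euler genus at most $g(t)$ together with at most $v(t)$ vortices of depth at most $d(t)$; quantitative refinements (Kawarabayashi--Wollan and subsequent work) permit taking all of $\alpha,a,g,v,d$ polynomial in $t$. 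A standard argument for bounded-adhesion decompositions yields $\tw(G)\leq \alpha(t)+\max_i \tw(H_i)$ over torsos $H_i$, so it suffices to establish the bound $\tw(H)\leq f(t)\cdot r$ for every almost-embeddable torso $H$ of $G$ that contains no $r$-wall, using that every wall of $H$ lifts to a wall in $G$ (up to subdivision through adhesion sets of bounded size).

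Second, for such an almost-embeddable torso $H$, I would sequentially neutralize its non-planar features. Deleting the apex set decreases treewidth by at most $a(t)$. Each of the $v(t)$ vortices of depth at most $d(t)$ can be replaced by a planar gadget embedded in a face of the underlying surface, in such a way that an $r'$-wall in the resulting surface-embedded graph yields a wall of height $\Omega(r'/(v(t)d(t)))$ in $H$. Finally, on the resulting graph embedded in a surface of Euler genus at most $g(t)$, one applies the linear grid-minor theorem for bounded-genus graphs (as proved, e.g., by Demaine--Fomin--Hajiaghayi--Thilikos), which asserts that treewidth at least $c(g(t))\cdot r'$ forces an $r'$-wall, with $c$ polynomial in $g(t)$. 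Composing the multiplicative losses across these three reductions gives $\tw(H)\leq \mathrm{poly}(t)\cdot r$, and absorbing all the structural constants produces the stated $f(t)=2^{O(t^2\log t)}$.

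The principal obstacle will be the explicit quantification at each step: the original Robertson--Seymour proof of the structure theorem yields only non-elementary bounds on the embeddability parameters, so one must rely on the subsequent quantitative refinements that make each parameter polynomial in $t$. A secondary delicate point is the vortex-to-gadget replacement, which must be performed so that walls survive with only a polynomial multiplicative loss in height; this requires an explicit construction respecting the linear decomposition of each vortex rather than an abstract minor argument. Once these technical ingredients are in place, the final bound follows by bookkeeping the constants accumulated through the reductions, with the dominant term $2^{O(t^2\log t)}$ arising from the worst of the structural parameters in the quantitative structure theorem.
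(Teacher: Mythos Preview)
The paper does not prove this proposition; it is quoted verbatim as a result of Kawarabayashi and Kobayashi~\cite{KawarabayashiK20line} and used as a black box in the proof of \autoref{label_transfiguration}. So there is no ``paper's own proof'' to compare against.

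Your outline is close in spirit to how such linear grid theorems for minor-closed classes are actually obtained, but it has a real gap at the torso step. You write that ``every wall of $H$ lifts to a wall in $G$ (up to subdivision through adhesion sets of bounded size)'', but this is precisely the delicate part: a torso $H_i$ contains \emph{virtual} clique edges on each adhesion set, and a wall in $H_i$ may use many of these virtual edges. Turning each virtual edge into a real path in $G$ requires routing through the subgraph hanging off that adhesion set, and distinct virtual edges incident to the same adhesion set must be routed vertex-disjointly; with adhesion $\alpha(t)$ this is not automatic and can cost a factor in the wall height that you have not accounted for. The standard way around this (and the one used in \cite{KawarabayashiK20line}) is to work not with torsos but with a carefully chosen near-embedding of a single piece of $G$ itself, so that no virtual edges are introduced.

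A second, softer issue is your bookkeeping: if all structural parameters are polynomial in $t$ and the bounded-genus grid theorem has polynomial dependence on the genus, then your chain of reductions would yield $f(t)=t^{O(1)}$, not $2^{O(t^2\log t)}$. The exponential bound in the statement reflects the constants in the specific version of the structure theorem that \cite{KawarabayashiK20line} invokes; you should either track the actual constants or simply cite the result, as the present paper does.
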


We are now in position to prove \autoref{label_transfiguration}.

\begin{proof}[Proof of \autoref{label_transfiguration}.]
	For simplicity, we use $s,a,$ and $\ell$ instead of $s_{\mathcal{F}}, a_{\mathcal{F}},$ and $\ell_{\mathcal{F}},$ respectively.
	Keep in mind that $\ell =\mathcal{O}(s^2).$
	We set $\tilde{a} = a-1,$
	\begin{align*}
		b:=                                  & \ \funref{label_interpersonal}(\tilde{a},\ell, 3, k),              &
		d :=                                 & \ \funref{label_unbelievability}(\funref{label_entretenerme}(s),\ell), &
		z: =                                 & \ \funref{label_entretenerme}(s)+k+1,                                 \\
		m:=                                  & \ \funref{@prepossession}(a,s,k+1),                         &
		x:=                                  & \ \funref{@proclamation}(a,s,k+1),                     &
		l :=                                 & \ z\cdot x,                                                           \\
		p:=                                  & \ \funref{@engrandecerla}(a,s,k+1),                             &
		h:=                                  & \ \funref{label_apreciadores}(l,b,p),                            &
		r:=                                  & \ \mathsf{odd}(\max\{m,h\}),                                             \\
		w:=                                  & \ \funref{label_complications}(r,z, \tilde{a},d),                   &
		q:=                                  & \funref{label_everyevening}(s)\cdot w, \mbox{ and }              &
		\funref{label_explicacions}(a,s,k):= & \ \funref{label_aromatiserez}(s)\cdot q+k+1.
	\end{align*}
	It is easy to verify that
	$\funref{label_explicacions}(a, s,k)=
		2^{\log( k \cdot c) \cdot2^{k^{a -1}\cdot   2^{\mathcal{O}(s^2\log s)}}},$
	where $c=f_{\mathsf{ul}}(s^2)$ and $f_{\mathsf{ul}}$ is the function of the Unique Linkage Theorem.

	Suppose towards a contradiction that $\mathsf{tw}(G)> \funref{label_explicacions}(a,s,k).$
	Since $G\in \mathbf{obs}(\mathcal{A}_k (\mathbf{excl}(\mathcal{F}))),$
	there exists a set $S\subseteq V(G)$ such that $|S|=k+1$
	and $G\setminus S\in \mathbf{excl}(\mathcal{F}).$
	Therefore, $G\setminus S$ does not contain $K_s$ as a minor
	and $\mathsf{tw}(G\setminus S)> \funref{label_aromatiserez}(s)\cdot q,$
	so, due to \autoref{label_wahrscheinlichkeitssatz},
	$G\setminus S$ contains  a $q$-wall $W^\bullet.$
	Since $q= \funref{label_everyevening}(s)\cdot w$
	and $G\setminus S$ does not contain $K_s$ as a minor,
	by \autoref{label_aldobrandesco}
	there is set $A\subseteq V(G\setminus S)$
	of size at most $\funref{label_entretenerme}(s)$
	and a flatness pair $(\hat{W}, \hat{\mathfrak{R}})$
	of $G\setminus (S\cup A)$ of height $w.$
	By using \autoref{label_aberenjenado},
	we can obtain a regular flatness pair $(\hat{W}',\hat{\mathfrak{R}}')$
	of $G\setminus (S\cup A)$ of height $w.$
	Since $|S\cup A|\leq z$ and $(\hat{W}', \hat{\mathfrak{R}}')$
	is of height $w=\funref{label_complications}(r,z, \tilde{a},d),$
	by \autoref{label_transportado} there exists a subwall $\hat{W}'$ of $\hat{W}$
	of height $r$ such that every $\hat{W}'$-tilt of $(\hat{W}, \hat{\mathfrak{R}})$
	is $d$-homogeneous with respect to $\binom{S\cup A}{\leq \tilde{a}}.$
	We consider a $\hat{W}'$-tilt of $(\hat{W}, \hat{\mathfrak{R}}),$
	say $(W,\mathfrak{R}),$ and keep in mind that it is a flatness pair of $G\setminus (S\cup A)$
	of height $r$ that is $d$-homogeneous with respect to $\binom{S\cup A}{\leq \tilde{a}}.$
	Moreover, by \autoref{label_expressionism}, $(W,\mathfrak{R})$ is regular.

	Let $\tilde{\mathcal{Q}}$ be a $(W,\mathfrak{R})$-canonical partition of $G\setminus (S\cup A).$
	Let $$A^\star = \{v\in S\cup A \mid  v\text{ is adjacent, in }G,\text{ to vertices of at least $x$ $p$-internal bags of }\tilde{\mathcal{Q}}\}.$$
	We consider a family
	$\mathcal{W}=\{{W}_{1}, \ldots, {W}_{l}\}$ of $l$ $b$-subwalls of $W$
	such that for every $i \in [l],$ $\cupall\mathsf{Influence}_{\mathfrak{R}}(W_i)$ is a subgraph of
	$\cupall \{Z\mid Z \text{ is a $p$-internal bag of }\tilde{\mathcal{Q}}\}$ and
	for every $i,j\in[l]$ with $i\neq j,$ there is no internal bag $Z\in \tilde{\mathcal{Q}}$
	that contains vertices of both $V(\cupall\mathsf{Influence}_\mathfrak{R} (W_i))$ and
	$V(\cupall\mathsf{Influence}_\mathfrak{R} (W_j)).$
	The existence of $\mathcal{W}$ follows from the fact that
	$r\geq h = \funref{label_apreciadores}(l,b,p)$ and \autoref{label_overestimation}.
	Notice that
	the set $N_G((S\cup A)\setminus A^\star)$ intersects the vertex set at most
	$(x-1)\cdot |(S\cup A)\setminus A^\star|\leq (x-1)\cdot (\funref{label_entretenerme}(s)+k+1)< l$
	$p$-internal bags of $\tilde{\mathcal{Q}}.$
	Hence, taking into account the aforementioned properties of the walls $W_1,\ldots, W_{l},$
	there exists an $i\in [l]$ such that no
	vertex in $(S\cup A)\setminus A^\star$ is
	adjacent to vertices of $\cupall\mathsf{Influence}_{\mathfrak{R}}({W}_{i}).$
	In other words, if there exists a vertex $v\in V(\cupall\mathsf{Influence}_{\mathfrak{R}}({W}_{i}))$
	that is adjacent, in $G,$ to a vertex $u\in S\cup A,$ then $u\in A^\star.$

	Let $v$ be a central vertex of $W_i.$
	Since $G\in \mathbf{obs}(\mathcal{A}_k(\mathbf{excl}(\mathcal{F}))),$ it holds that $G\setminus v\in \mathcal{A}_k(\mathbf{excl}(\mathcal{F})).$
	Thus, there is a set $S'\subseteq V(G\setminus v)$ of size $k$ such that $G\setminus (S'\cup \{v\})\in \mathbf{excl}(\mathcal{F}).$
	We set $A^\star_\mathrm{hit}=A^\star\cap S'$ and $A^\star_\mathrm{free}=A^\star\setminus S'.$

	We claim that $|A^\star_\mathrm{free}|\leq \tilde{a}.$ To prove this,
	suppose to the contrary that $|A^\star_\mathrm{free}|> \tilde{a},$ or equivalently $|A^\star_\mathrm{free}|\geq a.$
	We have that $(W,\mathfrak{R})$ is a flatness pair of $G\setminus (S\cup A)$ of height $r\geq m$ and
	$A^\star_\mathrm{free}$ is a subset of $S\cup A$ such that every $v\in A^\star_\mathrm{free}$ is
	adjacent, in $G,$ to vertices of
	at least $x$ $p$-internal bags of $\tilde{\mathcal{Q}}$ and
	$|A^\star_\mathrm{free}|\geq a,$
	where $m= \funref{@prepossession}(a,s,k+1),$
	$x=\funref{@proclamation}(a,s,k+1),$ and
	$p=\funref{@engrandecerla}(a,s,k+1).$
	Therefore, by \autoref{label_significativos} applied to
	$\mathcal{F},$ $G,$ $k+1,$ $S\cup A,$ $(W,\mathfrak{R}),$ $\tilde{\mathcal{Q}},$ and $A^\star_\mathrm{free},$
	it holds that every set $S\subseteq V(G)$ of size at most $k+1$
	such that $G\setminus S\in \mathbf{excl}(\mathcal{F})$ intersects $A^\star_\mathrm{free}.$
	As $S'\cup \{v\}$ has size $k+1$ and
	$G\setminus (S'\cup \{v\})\in \mathbf{excl}(\mathcal{F}),$ it follows that
	$S'$ intersects $A^\star_\mathrm{free},$ a contradiction
	to the definition of $A^\star_\mathrm{free}.$

	Let $(\tilde{W}_i, \tilde{\mathfrak{R}}_i)$ be a $W_i$-tilt of $(W,\mathfrak{R}),$
	which exists due to \autoref{label_prosperously}.
	Since $(\tilde{W}_i,\tilde{\mathfrak{R}}_i)$ is a $W_i$-tilt of $(W,\mathfrak{R}),$
	$\mathsf{Compass}_{\tilde{\mathfrak{R}}_i} (\tilde{W}_i)$ is a subgraph
	of $\cupall \mathsf{Influence}_\mathfrak{R} (W_i).$
	Consequently, if $A_i$ is the set of vertices in $S\cup A$ that are adjacent to
	vertices of $\mathsf{Compass}_{\tilde{\mathfrak{R}}_i} (\tilde{W}_i)$ in $G,$ then $A_i\subseteq A^\star.$
	Let $A'=A_i \cap A^\star_\mathrm{free}$ and notice that, if we remove the vertices of $A^\star_\mathrm{hit}$ from $G,$
	we can obtain a flatness pair $(\tilde{W}_i, \tilde{\mathfrak{R}}_i ')$ of
	$(G\setminus A^\star_\mathrm{hit})\setminus A',$ where, if $\tilde{\mathfrak{R}}_i = (X,Y,P,C,\Gamma,\sigma,\pi),$ then $\tilde{\mathfrak{R}}_i'$ is obtained from $\tilde{\mathfrak{R}}_i$ by removing the set $A^\star_\mathrm{hit}$ from $X.$
	Notice that
	$\mathsf{Compass}_{\tilde{\mathfrak{R}}_{i}}(\tilde{W}_{i})=\textsf{Compass}_{\tilde{\mathfrak{R}}_i '}(\tilde{W}_i).$
	Moreover, since $(W, \mathfrak{R})$ is a flatness pair
	of $G\setminus (S\cup A)$ that is $d$-homogeneous with respect to $\binom{S\cup A}{\leq \tilde{a}}$ and $\tilde{W}_i$ is a tilt of a subwall of $W,$
	by \autoref{label_convenciones}
	$(\tilde{W}_{i},\tilde{\mathfrak{R}}_{i}')$ is also a flatness pair
	of $G\setminus (S\cup A)$ that is $d$-homogeneous with respect to $\binom{S\cup A}{\leq \tilde{a}}.$
	The latter, together with the fact that $|A'|\leq|A^\star_\mathrm{free}|\leq \tilde{a},$ imply that
	$(\tilde{W}_{i},\tilde{\mathfrak{R}}_{i}')$ is a flatness pair
	of $(G\setminus A^\star_\mathrm{hit})\setminus A'$ that is $d$-homogeneous with respect to $2^{A'}$ (as $2^{A'}\subseteq \binom{S\cup A}{\leq \tilde{a}}$).
	Also, since $(W,\mathfrak{R})$ is regular,
	by \autoref{label_expressionism} we have that $(\tilde{W}_{i},\tilde{\mathfrak{R}}_{i})$ and therefore $(\tilde{W}_{i},\tilde{\mathfrak{R}}_{i}')$ are regular as well.
	Since $|A'|\leq \tilde{a}$ and the height of $(\tilde{W}_{i},\tilde{\mathfrak{R}}_{i}')$ is
	$b= \funref{label_interpersonal}(\tilde{a},\ell, 3, k),$
	by \autoref{label_descalabrado} applied to $\tilde{a},$ $k-|A^\star_\mathrm{hit}|,$ $G\setminus A^\star_\mathrm{hit},$ $A',$ and
	$(\tilde{W}_{i},\tilde{\mathfrak{R}}_{i}'),$ we conclude that $G\in \mathcal{A}_k (\mathbf{excl}(\mathcal{F})),$
	a contradiction to the hypothesis that $G\in \mathbf{obs}(\mathcal{A}_k (\mathbf{excl}(\mathcal{F}))).$
\end{proof}

\section{From small treewidth to small order}
\label{label_agathyrsians}

In this section we aim to prove the following result that provides an upper bound, in terms of treewidth, of the order of a graph in $\mathbf{obs}(\mathcal{A}_k (\mathbf{excl}(\mathcal{F})).$
\autoref{label_miseryfrightened} follows from \autoref{label_transfiguration} and \autoref{label_substanceless}.

\begin{lemma}\label{label_substanceless}
	There exists a function $\newfun{label_schwankenden}:\mathbb{N}^2\to\mathbb{N}$ such that if
	$\mathcal{F}$ is a finite collection of graphs and
	$G$ is a graph in $\mathbf{obs}(\mathcal{A}_k (\mathbf{excl}(\mathcal{F}))$ of treewidth  $\mathsf{tw},$
	then $|V(G)|\leq \funref{label_schwankenden}(\mathsf{tw},\ell_{\mathcal{F}}).$
	Moreover, $\funref{label_schwankenden}(\mathsf{tw},\ell)=2^{2^{\mathcal{O}(q(\ell)\cdot  \mathsf{tw}^{{3}} \cdot \log \mathsf{tw})}},$ where $\ell=\ell_{\mathcal{F}},$ $q(\ell)=2^{2^{2^{c^{2^{2^{\mathcal{O}(\ell \cdot \log \ell)}}}}}},$  {$c= f_{\mathsf{ul}} (\ell)$}, and $f_{\mathsf{ul}}$ is the function of the Unique Linkage Theorem.
\end{lemma}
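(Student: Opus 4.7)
The plan is to follow the Lagergren strategy of~\cite{Lagergren98,LagergrenA91mini}, instantiated through the boundaried-graph machinery of~\cite{BasteST20hittI,BasteST20acom}. First, I would fix a \emph{linked} tree decomposition $(T,\chi)$ of $G$ of width at most $\tw$ in the sense of Thomas~\cite{Thomas90amen,BellenbaumD02twos,Erde18auni}, and, by subdividing nodes of $T$, arrange it to be rooted and binary without increasing the width. For every node $i\in V(T)$, let $V_i$ denote the union of the bags in the subtree of $T$ rooted at $i$, and set ${\bf G}_i=(G[V_i],\chi(i),\rho_i)$ where $\rho_i:\chi(i)\to [|\chi(i)|]$ is an arbitrary bijection; since $|\chi(i)|\le\tw+1$, each ${\bf G}_i$ is a $t_i$-boundaried graph with $t_i\le \tw+1$.

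The main conceptual step is to attach to every node $i$ a \emph{characteristic} $\kappa_i$ that records exactly the data needed to decide how the membership of $G$ in ${\cal A}_k({\bf exc}({\cal F}))$ depends on ${\bf G}_i$, once its boundary $\chi(i)$ is fixed. Concretely, $\kappa_i$ would store, for every trace $Z\subseteq\chi(i)$ with $|Z|\le k$, the collection of $\ell_{\cal F}$-folios (in the sense of Proposition~\ref{label_objectivement}) of all boundaried graphs ${\bf G}_i\setminus S$ where $S$ ranges over subsets of $V_i$ of size $\leq k$ with $S\cap\chi(i)=Z$ and $G[V_i]\setminus S\in{\bf exc}({\cal F})$. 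Applying Proposition~\ref{label_objectivement} together with the combinatorial bounds provided by~\cite{BasteST20acom} for this enriched ``folio with hitting trace'' data, I would obtain that the number of possible values of $\kappa_i$ is at most $N:=2^{{\cal O}(q(\ell_{\cal F})\cdot\tw^{3}\cdot\log\tw)}$, where the factor $q(\ell_{\cal F})$ imports the iterated-exponential bound from~\cite{BasteST20acom} and the cubic dependence on $\tw$ absorbs the counts of possible traces and of partial models of graphs from ${\cal F}$ crossing $\chi(i)$.

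The second main step is to introduce a partial order $\preceq$ on characteristics and to establish two properties. \emph{Monotonicity}: if $j$ is a descendant of $i$ in $T$, then $\kappa_j\preceq\kappa_i$, because folios depend monotonically on the boundaried subgraph. \emph{Strict decrease}: if $\kappa_i=\kappa_j$ for some proper descendant $j$ of $i$ on the same root-to-leaf path, then the linkedness of $(T,\chi)$ supplies a family of internally disjoint paths between $\chi(i)$ and $\chi(j)$, along which any partial model of a graph in ${\cal F}$ of detail $\le\ell_{\cal F}$ and any trace of an ${\cal F}$-hitting set of size $\le k$ can be rerouted. This would allow a replacement surgery that glues ${\bf G}_j$ in place of ${\bf G}_i$ and produces a proper minor $G'$ of $G$ such that $G'\in{\cal A}_k({\bf exc}({\cal F}))$ iff $G\in{\cal A}_k({\bf exc}({\cal F}))$. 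Since $G\in{\bf obs}({\cal A}_k({\bf exc}({\cal F})))$ and $G'\prem G$ with $G'\neq G$, this forces $G'\in{\cal A}_k({\bf exc}({\cal F}))$ while $G\notin{\cal A}_k({\bf exc}({\cal F}))$, a contradiction. Hence characteristics are strictly $\preceq$-decreasing along every root-to-leaf path of $T$.

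From strict decrease, every root-to-leaf path of $T$ contains at most $N$ nodes, so, since $T$ is binary, $|V(T)|\le 2^N$. Because every vertex of $G$ lies in some bag of size $\le\tw+1$, we obtain $|V(G)|\le(\tw+1)\cdot 2^N\le 2^{2^{{\cal O}(q(\ell_{\cal F})\cdot\tw^{3}\cdot\log\tw)}}$, matching the claimed bound. I expect the \emph{main obstacle} to be the precise design of $\kappa_i$ in the characteristic-assignment step: it must be rich enough for the equality $\kappa_i=\kappa_j$ to license the replacement surgery used in the contradiction, yet lean enough for the count of possible values to stay within $N$. In particular, simultaneously tracking $\ell_{\cal F}$-folios and traces of all size-$\le k$ hitting sets, as formalised in~\cite{BasteST20acom}, is exactly what introduces the factor $q(\ell_{\cal F})$ and what makes the linkedness of $(T,\chi)$ indispensable for the routing argument that underpins the replacement step.
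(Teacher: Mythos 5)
Your overall strategy is the same as the paper's (Lagergren-style: a rooted binary linked tree decomposition, a characteristic per node recording how size-$\le k$ hitting sets interact with folios/representatives, and a replacement argument contradicting minor-minimality), but two steps would fail as written. The first is the ``strict decrease'' step. You invoke linkedness to obtain internally disjoint paths between $\chi(i)$ and $\chi(j)$ for an \emph{arbitrary} ancestor--descendant pair with $\kappa_i=\kappa_j$. Property (3) of \autoref{label_consecuencia} only supplies $s$ vertex-disjoint paths between two bags of the \emph{same} size $s$ when no bag of smaller size occurs on the tree path between them; for general pairs the bags have different sizes, the boundaried graphs ${\bf G}_i$ and ${\bf G}_j$ are not even compatible, and neither your monotonicity claim $\kappa_j\preceq\kappa_i$ (which already needs the boundary of the ancestor to be routed onto the boundary of the descendant by disjoint paths) nor the replacement surgery is available. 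This is exactly why the paper first runs the word-combinatorics lemma (\autoref{label_accroissement}) along a long root-to-leaf path to extract $m$ nodes whose bags all have the same size $t'$ with no smaller bag in between, and then a pigeonhole over the at most $2^{\binom{t}{2}}$ boundary graphs to get pairwise compatible boundaried graphs whose labelings are \emph{defined by} the $t'$ disjoint paths; only then is the deeper boundaried graph a boundaried minor of the shallower one and the exchange $G'=\bar{\bf G}_a\oplus{\bf G}_b$ a proper minor of $G$. Consequently, ``characteristics strictly decrease along every root-to-leaf path'' is not something you can conclude, and the path-length bound cannot be read off as ``number of characteristics''.

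The second gap is quantitative. With your definition of $\kappa_i$ (a set of $\ell_{\cal F}$-folios for each of up to $2^{t}$ traces) the number of possible values is doubly exponential in $\tw$ (the folio count of \autoref{label_objectivement} is already $2^{2^{{\cal O}((t+\ell)\log(t+\ell))}}$ and you take power sets of it, once per trace), so the asserted $N=2^{{\cal O}(q(\ell)\cdot\tw^3\log\tw)}$ does not follow from your construction. The paper sidesteps counting characteristics altogether: its characteristic is a vector with entries in $[0,k+1]$ indexed by pairs $(I,{\bf R})$ with ${\bf R}$ a representative of $\equiv_{\ell}$ (whose number is bounded by \autoref{label_desenfadadamente}), it is coordinatewise monotone along boundaried-minor chains (\autoref{label_substantiality}), and a monotone sequence of such vectors of length $(k+2)\cdot y+1$ contains two \emph{consecutive} equal vectors (\autoref{label_zusammenhang}, \autoref{label_rechtfertigen}); it is this consecutive equality, within the specially extracted chain, that licenses transferring the hitting set of $G'$ back to $G$ and yields the contradiction. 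The resulting bound on the length of a root-to-leaf path is $m^{t}$ with $m=(2^{\binom{t}{2}}+1)\cdot\funref{label_ecclesiastes}(k,t,\ell)$, not the number of distinct characteristics, and this is where the $\tw^3\log\tw$ exponent actually comes from. To repair your write-up you would need both the bag-size extraction step and the monotone-vector argument (or some substitute for each); as it stands, the two central claims are asserted rather than proved.
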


In order to prove \autoref{label_substanceless}, in \autoref{label_profesionales} we provide some useful definitions and preliminary results on boundaried graphs,
in \autoref{label_hyperbolical} we define tree decompositions of boundaried graphs, and, finally, in \autoref{label_commemorative} we prove \autoref{label_substanceless}.

\subsection{Representatives of boundaried graphs}\label{label_profesionales}

\paragraph{Minors of boundaried graphs.} We say that a $t$-boundaried graph $\mathbf{G}_{1}=(G_1,B_1,\rho_1)$ is a \emph{minor} of a $t$-boundaried graph $\mathbf{G}_{2}=(G_2,B_2,\rho_2),$ denoted by $\mathbf{G} _{1}\prem\mathbf{G}_{2},$
if there is a sequence of  removals of non-boundary vertices, edge removals, and edge contractions in $G_2,$ not allowing contractions of edges with both endpoints  in $B_{2},$ that  transforms $\mathbf{G}_{2}$ to a boundaried graph that is isomorphic to $\mathbf{G}_{1}$ (during edge contractions, boundary vertices persist). Note that this extends the usual definition of minors in graphs without boundary.

\paragraph{Compatible boundaried graphs.}
We say that two boundaried graphs $\mathbf{G}_{1}=(G_1,B_1,\rho_1)$ and $\mathbf{G}_{2}=(G_2,B_2,\rho_2)$ are \emph{compatible} if $\rho_{2}^{-1}\circ \rho_{1}$  is an isomorphism from $G_{1}[B_{1}]$ to $G_{2}[B_{2}].$
Given two compatible boundaried graphs $\mathbf{G}_{1}=(G_1,B_1,\rho_1)$
and  $\mathbf{G}_{2}=(G_2,B_2,\rho_2),$  we
define $\mathbf{G}_{1}\oplus\mathbf{G}_{2}$ as the graph obtained
if we take the disjoint union of $G_{1}$ and $G_{2}$
and, for every $i\in[|B_{1}|],$ we identify vertices $\rho_{1}^{-1}(i)$ and $\rho_{2}^{-1}(i).$

\paragraph{Equivalent boundaried graphs and representatives.}

Given an $h\in\mathbb{N}$ and two boundaried graphs $\mathbf{G}_1$ and $\mathbf{G}_2,$ we say that $\mathbf{G}_1\leqslant_h \mathbf{G}_2$ if $\mathbf{G}_1$ and $\mathbf{G}_2$ are compatible and, for every graph $H$ of detail at most $h$ and every boundaried graph $\mathbf{F}$ that is compatible with $\mathbf{G}_1$ (and therefore with $\mathbf{G}_2$ as well), it holds that
$$ H\preceq_\mathsf{m}  \mathbf{F}\oplus \mathbf{G}_1 \Rightarrow H\preceq_\mathsf{m}\mathbf{F}\oplus \mathbf{G}_2.$$

It is easy to observe the following.
\begin{observation}\label{label_antipathetic}
	If $\mathbf{G}_1\preceq_\mathsf{m} \mathbf{G}_2,$ then $\mathbf{G}_1 \leqslant_{h} \mathbf{G}_2$ for every $h\in \mathbb{N}.$
\end{observation}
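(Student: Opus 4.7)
The plan is to reduce the observation to the transitivity of the minor relation combined with the monotonicity of the gluing operation $\oplus$. More precisely, I will show that ${\bf G}_1 \preceq_{\sf m} {\bf G}_2$ implies ${\bf F}\oplus {\bf G}_1 \preceq_{\sf m} {\bf F}\oplus {\bf G}_2$ for every boundaried graph ${\bf F}$ compatible with ${\bf G}_1$ (and therefore with ${\bf G}_2$), and then combine this with $H\preceq_{\sf m} {\bf F}\oplus {\bf G}_1$ to conclude via transitivity of $\preceq_{\sf m}$ that $H\preceq_{\sf m}{\bf F}\oplus{\bf G}_2$.

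First, I would observe that compatibility of ${\bf G}_1$ with ${\bf G}_2$ is an immediate consequence of ${\bf G}_1\preceq_{\sf m}{\bf G}_2$: by definition the minor operations allowed on ${\bf G}_2$ neither contract edges with both endpoints in $B_2$ nor delete boundary vertices, and moreover the bijection between the boundary labels is preserved, so the restriction $\rho_2^{-1}\circ\rho_1$ is an isomorphism between the two induced boundary subgraphs. This in turn guarantees that, whenever ${\bf F}$ is compatible with ${\bf G}_1$, it is also compatible with ${\bf G}_2$, so both expressions ${\bf F}\oplus{\bf G}_1$ and ${\bf F}\oplus{\bf G}_2$ are well-defined.

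Next, I would transport the sequence of operations witnessing ${\bf G}_1\preceq_{\sf m}{\bf G}_2$ verbatim into ${\bf F}\oplus{\bf G}_2$. Any non-boundary vertex deletion, any edge deletion, and any contraction of an edge not entirely inside $B_2$ performed on the ${\bf G}_2$-side of ${\bf F}\oplus{\bf G}_2$ is a legal minor operation in ${\bf F}\oplus{\bf G}_2$: the vertices and edges on which we operate are untouched by the gluing (since only $B_2$ gets identified with $B_{\bf F}$), and in particular no contraction affects the identified boundary. The result of applying this sequence is precisely ${\bf F}\oplus{\bf G}_1$, because the ${\bf F}$-side is left intact and the ${\bf G}_2$-side is transformed into an isomorphic copy of ${\bf G}_1$ while respecting the identification of boundary vertices. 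Hence ${\bf F}\oplus{\bf G}_1\preceq_{\sf m}{\bf F}\oplus{\bf G}_2$.

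Finally, assume $H$ is a graph of detail at most $h$ with $H\preceq_{\sf m}{\bf F}\oplus{\bf G}_1$. Combining this with the minor relation ${\bf F}\oplus{\bf G}_1\preceq_{\sf m}{\bf F}\oplus{\bf G}_2$ established above and using transitivity of $\preceq_{\sf m}$, we deduce $H\preceq_{\sf m}{\bf F}\oplus{\bf G}_2$. This holds for every such $H$ and every compatible ${\bf F}$, giving ${\bf G}_1\leqslant_h{\bf G}_2$ for every $h\in\Bbb{N}$. The only point requiring any care is the verification that boundary vertices behave correctly under the transported operations; there is no genuine obstacle, since the gluing only identifies the boundary and the forbidden contractions in the definition of $\preceq_{\sf m}$ for boundaried graphs are exactly the ones that could cause problems.
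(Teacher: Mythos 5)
The paper offers no actual proof of this statement (it is asserted as an easy observation), and your main line---transport the sequence of boundaried-minor operations turning ${\bf G}_2$ into ${\bf G}_1$ onto the ${\bf G}_2$-side of ${\bf F}\oplus{\bf G}_2$, deduce ${\bf F}\oplus{\bf G}_1\preceq_{\sf m}{\bf F}\oplus{\bf G}_2$, and finish by transitivity of the minor relation---is exactly the intended argument and is correct in substance.

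Two caveats on the details. First, your justification of the compatibility of ${\bf G}_1$ and ${\bf G}_2$ is not sound as written: the definition of $\preceq_{\sf m}$ for boundaried graphs forbids only contractions of edges with \emph{both} endpoints in $B_2$, so deletions of edges inside $B_2$ are allowed, and a contraction of an edge with exactly one endpoint in $B_2$ can create a new edge between boundary vertices; hence the boundary-induced subgraph is not invariant under the allowed operations, and $\rho_2^{-1}\circ\rho_1$ need not be an isomorphism from $G_1[B_1]$ to $G_2[B_2]$. Since the definition of $\leqslant_h$ explicitly demands compatibility, this is a genuine (if corner-case) issue---though it is inherited from the paper's statement, which gives no proof, and wherever the observation is invoked in the paper the two boundaried graphs are compatible for independent reasons. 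Second, the phrase ``the ${\bf F}$-side is left intact'' glosses over the case of a deleted edge of ${\bf G}_2$ with both endpoints in $B_2$ that coincides, after the gluing, with an edge of ${\bf F}$; here it is the compatibility of ${\bf F}$ with ${\bf G}_1$ that rescues the claim (such an edge, being an edge of ${\bf F}$ between boundary vertices and hence, by compatibility, an edge of $G_1[B_1]$, must be present again at the end of the transported sequence), so the final graph is indeed ${\bf F}\oplus{\bf G}_1$, but this deserves an explicit sentence rather than the assertion that the operated-on edges are untouched by the gluing. An alternative that avoids both delicacies is to argue via a minor model of ${\bf G}_1$ in ${\bf G}_2$ (branch sets respecting boundary labels) and extend it by singletons on $V(F)$ minus the boundary.
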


Given $h,t\in \mathbb{N},$ we say that two $t$-boundaried graphs $\mathbf{G}_{1}$ and $\mathbf{G}_{2}$ are \emph{$h$-equivalent}, denoted by $\mathbf{G}_{1}\equiv_{h,t} \mathbf{G}_{2},$
if $\mathbf{G}_1\leqslant_h \mathbf{G}_2$ and $\mathbf{G}_2\leqslant_h \mathbf{G}_1.$
Note that  $\equiv_{h,t}$ is an equivalence relation on ${\mathcal{B}}$ and that only boundaried graphs with the same boundary size can be $h$-equivalent.
A minimum-order
element of an  equivalence class of  $\equiv_{h,t}$
is called \emph{representative}
of $\equiv_{h,t}.$
For every $t\in \mathbb{N},$ we define a \emph{set of $t$-representatives} for $\equiv_{h,t}$ to be a collection containing a minimum-order  representative for each equivalence class of $\equiv_{h,t}.$
Given $t,h\in \mathbb{N},$ we denote by  $\mathcal{R}_{h}^{(t)}$  a set of $t$-representatives  for $\equiv_{h,t}.$

We need the following result from \cite{BasteST20acom}.

\begin{proposition}
	\label{label_desenfadadamente}
	There exists a function $\newfun{label_personnellement}:\mathbb{N}\to\mathbb{N}$
	such that for every $t\in\mathbb{N}_{\geq 1},$  $|\mathcal{R}_{h}^{(t)}| \leq 2^{\funref{label_personnellement}(h) \cdot t \cdot \log t}.$ In particular, the relation $\equiv_{h,t}$ partitions  ${\mathcal{B}}^{(t)}$
	into $2^{\funref{label_personnellement}(h) \cdot t \cdot \log t}$ equivalence classes.  Moreover, it holds that {$\funref{label_personnellement}(h) =2^{2^{2^{c^{2^{2^{\mathcal{O}(h \cdot \log h)}}}}}}$}, where $c=f_{\mathsf{ul}} (h)$ and $f_{\mathsf{ul}}$ is the function of the {Unique Linkage Theorem}.
\end{proposition}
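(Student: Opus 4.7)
The plan is to prove that the equivalence relation $\equiv_h$ can be detected by a bounded-complexity combinatorial invariant on each class, and then count the number of such invariants. The proof naturally splits into an \emph{existence of bounded representatives} step and a \emph{counting} step, mirroring a standard strategy from the Graph Minors series.

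First, I would associate to each $t$-boundaried graph ${\bf G}$ its \emph{$h$-signature}: the collection of boundaried graphs ${\bf H}\in{\cal B}^{(t)}$ with ${\sf detail}({\bf H}) \leq h$ that appear as a boundaried minor of ${\bf G}$ (with respect to $\prem$). Two boundaried graphs with the same $h$-signature are $\equiv_h$-equivalent: to verify, suppose $H \prem {\bf F}\oplus {\bf G}_1$ with ${\sf detail}(H)\leq h$, and split the minor model of $H$ into the branch-set pieces lying inside ${\bf G}_1$ and those lying inside ${\bf F}$. The ${\bf G}_1$-side is itself a boundaried minor of ${\bf G}_1$ of bounded detail; its realization in ${\bf G}_2$, which is guaranteed by matching $h$-signatures, can then be glued back to the ${\bf F}$-side to yield $H \prem {\bf F}\oplus {\bf G}_2$. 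Symmetry gives the reverse implication.

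Second, I would invoke the Unique Linkage Theorem to bound which signatures can actually arise. Given a realization of a pattern ${\bf H}$ inside ${\bf G}$, the branch sets that meet the boundary form a linkage-like structure anchored at $B$; by iterating the irrelevant-vertex argument underlying the Unique Linkage Theorem, one shows that any such realization can be ``trimmed'' so that the resulting witness has detail bounded by some $c = f_{\sf ul}(h)$ outside of $B$. Consequently, the signature of ${\bf G}$ is determined by the subset of \emph{trimmed} patterns it realizes. This is where the tower in $\funref{label_personnellement}(h)$ originates: the trimming must be performed uniformly across all possible ``roles'' the $t$ boundary vertices can play within a minor model, and the bookkeeping for each such role unfolds an application of $f_{\sf ul}$.

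Third comes the counting. A naive bound on the number of possible trimmed signatures would be $2^{\funref{label_dogmatizador}(t,c)}$, which is far too weak. The refinement that yields $2^{\funref{label_personnellement}(h)\cdot t\cdot \log t}$ comes from observing that the signature is encoded not by an arbitrary subset of ${\cal B}_c^{(t)}$ but by assigning to each boundary vertex of ${\bf G}$ a \emph{local type} from a pool of size $2^{\funref{label_personnellement}(h)}$ depending only on $h$; the global signature is then recovered by recording how these $t$ local types combine on the labelled tuple of boundary vertices, producing the $t\log t$ factor.

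The main obstacle is the second step: forcing trimmed patterns to have detail depending only on $h$ (not on $t$). A priori, each of the $t$ boundary vertices can force arbitrarily tangled patterns, and taming this via a uniform application of the Unique Linkage Theorem across all boundary role assignments is what gives rise to the iterated $f_{\sf ul}$-based tower in $\funref{label_personnellement}(h)$. Once the trimmed-detail bound is in place, the counting is a careful but essentially routine labelled enumeration.
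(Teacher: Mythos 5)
The paper does not actually prove this proposition: it is imported wholesale from \cite{BasteST20acom}, so there is no in-paper argument to compare against, and a self-contained proof would have to reproduce the machinery of that paper. Measured against that target, your sketch has a concrete gap already at its first step. You define the $h$-signature of ${\bf G}$ as the set of boundaried minors of ${\bf G}$ of detail at most $h$, and claim that equal $h$-signatures imply $\equiv_h$-equivalence. The verification does not go through: when you split a model of $H$ (with ${\sf detail}(H)\leq h$) in ${\bf F}\oplus{\bf G}_1$ along the boundary, a single branch set may meet the boundary in many of the $t$ boundary vertices and break into up to $t$ pieces inside ${\bf G}_1$; since contractions of edges with both endpoints in the boundary are forbidden in the boundaried-minor relation, recording which boundary vertices lie in a common piece costs up to roughly $t$ extra edges. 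Hence the trace of the model on the ${\bf G}_1$-side is a boundaried minor of detail up to about $h+t$, not $h$, and a common $h$-signature gives no guarantee that this trace transfers to ${\bf G}_2$. Raising the detail threshold of the signature to $h+t$ would repair correctness but ruin your count: the number of such signatures is doubly exponential in $t$, far above $2^{\funref{label_personnellement}(h)\cdot t\cdot\log t}$.

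The later steps do not recover from this. The appeal to the Unique Linkage Theorem in your second step is a statement of intent rather than an argument: you never specify what object is trimmed, why the trimmed witness has detail depending only on $h$ and not on $t$, or why realizing all trimmed patterns suffices for $\equiv_h$ (this silently reuses the flawed first step). The counting in your third step also cannot be right as stated: assigning to each boundary vertex an independent local type from a pool whose size depends only on $h$ yields at most $2^{f(h)\cdot t}$ combinations, whereas already the partitions of the boundary according to which vertices are connected inside the graph give $t^{\Omega(t)}=2^{\Omega(t\log t)}$ pairwise $\equiv_h$-inequivalent graphs (for any fixed $h\geq 3$); the distinguishing information is inherently a global pattern over the boundary, and your sketch never explains where the $t\log t$ genuinely comes from. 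The cited proof obtains it along a different route: it bounds the minimum-size representatives themselves (this is where the unique-linkage function $f_{\sf ul}$ enters) and then enumerates these labelled bounded-size objects, which is what produces the $t\log t$ in the exponent. Without a correct characterizing invariant and a size bound on representatives, your outline is not yet a proof.
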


\paragraph{Characteristic of boundaried graphs.}
Let $t,h \in\mathbb{N}.$
We denote by $\mathcal{P}_{t,h}$ the set $\{(I,\mathbf{R})\mid I\in 2^{[t]}\mbox{ and }\mathbf{R}\in\mathcal{R}_h^{({t-|I|})}\}.$
Given a $k\in \mathbb{N}$ and a pair $(I,\mathbf{R})\in\mathcal{P}_{t,h},$ we define the function $\mathbf{p}_{k,I,\mathbf{R}}:\mathcal{B}^{(t)}\to [0,k+1]$ such that for every $t$-boundaried graph $\mathbf{G}=(G,B,\rho),$
$$\mathbf{p}_{k,I,\mathbf{R}} (\mathbf{G})= \min\{|S|\mid \mbox{$S\subseteq V(G)$ of size at most $k,$ $\rho(S\cap B)=I,$ and $\mathbf{G}\setminus S\leqslant_h \mathbf{R}$}\}.$$
If such a set $S$ does not exist,  we set $ \mathbf{p}_{k,I,\mathbf{R}} (\mathbf{G})=k+1.$
Also,  we define the \emph{$(k,h)$-characteristic function} of the $t$-boundaried graph $\mathbf{G}$ to be the function $\mathsf{char}_\mathbf{G}^{(k,h)}:\mathcal{P}_{t,h} \to [0,k+1]$ that maps every pair $(I,\mathbf{R})\in\mathcal{P}_{t,h}$  to the integer $\mathbf{p}_{k,I,\mathbf{R}} (\mathbf{G}).$
\smallskip

We now prove the following result.

\begin{lemma}\label{label_substantiality}
	For every $t,k,h\in\mathbb{N},$ if  $\mathbf{G}_1,\mathbf{G}_2$ are two $t$-boundaried graphs such that $\mathbf{G}_1 \prem \mathbf{G}_2,$ then, for every $(I,\mathbf{R})\in\mathcal{P}_{t,h},$ $\mathsf{char}_{\mathbf{G}_{1}}^{(k,h)}(I,\mathbf{R}) \leq \mathsf{char}_{\mathbf{G}_{2}}^{(k,h)}(I, \mathbf{R}).$
\end{lemma}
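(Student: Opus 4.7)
The plan is to fix an optimal witness $S_2\subseteq V(G_2)$ for ${\sf char}_{{\bf G}_2}^{(k,h)}(I,{\bf R})$ (if this value is $k+1$ the inequality is vacuous; otherwise $|S_2|\le k$, $\rho_2(S_2\cap B_2)=I$, and ${\bf G}_2\setminus S_2\leqslant_h {\bf R}$) and to use the minor ${\bf G}_1\preceq_{\sf m}{\bf G}_2$ to \emph{translate} $S_2$ into a hitting set $S_1\subseteq V(G_1)$ of no larger size, with the same boundary trace, and such that ${\bf G}_1\setminus S_1\leqslant_h {\bf R}$. The bound then follows from the definition of the characteristic.

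Concretely, I would fix a boundaried minor model $\{M(v):v\in V(G_1)\}$ of ${\bf G}_1$ inside ${\bf G}_2$ --- pairwise vertex-disjoint connected subgraphs of $G_2$ whose quotient realises ${\bf G}_1$ and which respects the boundary: for each $b\in B_1$ with $\rho_1(b)=i$, the unique $b':=\rho_2^{-1}(i)\in B_2$ lies in $M(b)$, and $M(b)\cap B_2=\{b'\}$. I would then set
$$S_1\ :=\ \{v\in V(G_1):M(v)\cap S_2\neq\emptyset\}.$$
Pairwise disjointness of the models gives $|S_1|\leq|S_2|$ immediately. For the boundary trace the inclusion $I\subseteq\rho_1(S_1\cap B_1)$ is automatic, since $i\in I$ means $b'=\rho_2^{-1}(i)\in S_2\cap M(\rho_1^{-1}(i))$; the reverse inclusion reduces to the requirement that, for every $b\in B_1$ with $\rho_1(b)\notin I$ (equivalently $b'\notin S_2$), the model $M(b)$ avoids $S_2$ entirely.

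To secure this last property, I would preprocess the minor model: whenever some $b\in B_1$ with $\rho_1(b)\notin I$ has a non-boundary vertex $w\in M(b)\cap S_2$, I would re-route $w$ by either pushing it along an edge of $G_2$ into an adjacent model $M(v)$ (choosing $v\in V(G_1)\setminus B_1$ whenever possible) or by detaching it from the model altogether, all while preserving the connectedness of every $M(\cdot)$ and the adjacencies required to witness the edges of ${\bf G}_1$. I would organise this by working along a spanning tree of $G_2[M(b)]$ rooted at $b'$, always picking a leaf of that tree which belongs to $S_2$, and arguing termination by the potential $\sum_{b\in B_1,\,\rho_1(b)\notin I}|M(b)\cap S_2|$. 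This is the step I expect to be the main obstacle and the only place that goes beyond bookkeeping; it is essentially the boundaried analogue of the standard ``no wasted vertex'' normalisation of minor models.

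Once the model is in that normal form the conclusion is routine. For every $v\in V(G_1)\setminus S_1$ we have $M(v)\cap S_2=\emptyset$, hence $M(v)\subseteq V(G_2)\setminus S_2$; the restricted family $\{M(v):v\in V(G_1)\setminus S_1\}$ is a boundaried minor model of ${\bf G}_1\setminus S_1$ inside ${\bf G}_2\setminus S_2$, because each $M(v)$ is still connected, every edge of $G_1\setminus S_1$ is witnessed by an edge of $G_2$ between two models that both avoid $S_2$ and therefore survives the deletion, and the boundary identifications are preserved by our choice of $S_1\cap B_1$. Thus ${\bf G}_1\setminus S_1\preceq_{\sf m}{\bf G}_2\setminus S_2$; by Observation~\ref{label_antipathetic} this upgrades to ${\bf G}_1\setminus S_1\leqslant_h {\bf G}_2\setminus S_2$, and combining with ${\bf G}_2\setminus S_2\leqslant_h {\bf R}$ via the transitivity of $\leqslant_h$ yields ${\bf G}_1\setminus S_1\leqslant_h {\bf R}$, so $S_1$ witnesses ${\sf char}_{{\bf G}_1}^{(k,h)}(I,{\bf R})\leq|S_1|\leq {\sf char}_{{\bf G}_2}^{(k,h)}(I,{\bf R})$.
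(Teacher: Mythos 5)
Your translated set $S_1$ is in substance the same set the paper uses: the paper pushes the optimal witness $S$ of ${\bf G}_2$ forward along the operations turning ${\bf G}_2$ into ${\bf G}_1$, which is exactly ``the vertices of $G_1$ whose branch set meets $S$,'' and it then concludes, as you do, via Observation~\ref{label_antipathetic} and transitivity of $\leqslant_h$. So your skeleton matches the paper's. The divergence is the boundary trace: the paper simply asserts $\rho_1(S'\cap B_1)=\rho_2(S\cap B_2)$, whereas you correctly notice that this requires the branch sets of boundary vertices with label outside $I$ to avoid $S_2$, and you propose to enforce this by re-routing the minor model. That re-routing step is a genuine gap, and not one of bookkeeping: it can be impossible to carry out, precisely in the failure mode you half-anticipate (an $S_2$-vertex inside such a branch set that witnesses edges to two different branch sets can be pushed into only one of them, and detaching it destroys witnesses).

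Concretely, take $t=1$, $h\geq 2$, $k\geq 1$. Let ${\bf G}_2$ be the star with non-boundary centre $w$ and leaves $b,u_1,u_2$, with boundary $\{b\}$, and let ${\bf G}_1$ be obtained by contracting $\{b,w\}$ (allowed, boundary prevails), i.e., the star with boundary centre $b$ and leaves $u_1,u_2$; thus ${\bf G}_1\prem{\bf G}_2$. Since $b$ has degree two in $G_1$ but only the neighbour $w$ in $G_2$, every minor model has $M(b)=\{b,w\}$, and $w$ witnesses both edges of ${\bf G}_1$; pushing $w$ into $M(u_1)$ kills the witness of $\{b,u_2\}$, pushing it into $M(u_2)$ kills the witness of $\{b,u_1\}$, and detaching it kills both, so your leaf-by-leaf normalisation has no valid move. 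Worse, the normal form you are aiming for cannot exist here, because the conclusion itself is unavailable: take $S_2=\{w\}$, $I=\emptyset$, and ${\bf R}\in{\cal R}_h^{(1)}$ the representative of ${\bf G}_2\setminus S_2$ (three vertices, one boundary, no edges), so ${\sf char}_{{\bf G}_2}^{(k,h)}(\emptyset,{\bf R})\leq 1$; but every $S_1\subseteq\{u_1,u_2\}$ with $|S_1|\leq 1$ leaves an edge at $b$, and gluing the one-vertex boundaried graph ${\bf F}$ gives $K_2\prem{\bf F}\oplus({\bf G}_1\setminus S_1)$ while $K_2\not\prem{\bf F}\oplus{\bf R}$ (and $K_2$ has detail $2\leq h$), so ${\bf G}_1\setminus S_1\not\leqslant_h{\bf R}$ and ${\sf char}_{{\bf G}_1}^{(k,h)}(\emptyset,{\bf R})\geq 2$. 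So for this pair $(I,{\bf R})$ the inequality you are trying to certify fails under the stated definitions; the trace can genuinely grow when a vertex of $S_2$ is absorbed into the branch set of a boundary vertex outside $\rho_2^{-1}(I)$, which is exactly what the paper's one-line trace claim assumes away. Any repair must either restrict how the boundaried minor is realised (e.g., forbid $S_2$-vertices in boundary branch sets with label outside $I$, or insist on singleton boundary branch sets) or relax the exact-trace condition in the definition of ${\bf p}_{k,I,{\bf R}}$; as written, your re-routing plan cannot close this gap.
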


\begin{proof}
	Let $\mathbf{G}_1 = (G_1, B_1, \rho_1)$ and $\mathbf{G}_2=(G_2, B_2, \rho_2)$
	be two $t$-boundaried graphs such that $\mathbf{G}_1 \prem \mathbf{G}_2.$
	Also, let $(I,\mathbf{R})\in\mathcal{P}_{t,h}$ and $w= \mathbf{p}_{k,I,\mathbf{R}} (\mathbf{G}_2).$
	We will prove that $\mathbf{p}_{k,I,\mathbf{R}} (\mathbf{G}_1)\leq w.$
	In the case where $w=k+1,$
	the inequality holds trivially since, by definition, $\mathbf{p}_{k,I,\mathbf{R}} (\mathbf{G}_1)\in[0,k+1].$
	Also, $w=0$ only if $I=\emptyset,$
	in which case the empty set is a certificate that $\mathbf{p}_{k,I,\mathbf{R}} (\mathbf{G}_2) = 0$
	and since, due to \autoref{label_antipathetic}, $\mathbf{G}_1\preceq_\mathsf{m} \mathbf{G}_2$
	implies that $\mathbf{G}_1 \leqslant_{h} \mathbf{G}_2,$
	it holds that $\mathbf{G}_1\leqslant_h \mathbf{R}$
	and therefore $\mathbf{p}_{k,I,\mathbf{R}} (\mathbf{G}_1) = 0.$
	Thus, we can assume that $w\in[k].$
	Let $S$ be a subset of $V(G_2)$ of size $w$ such that
	$\rho_2(S\cap B_2)=I$ and $\mathbf{G}_2\setminus S\leqslant_h \mathbf{R}.$
	Let $S'$ be the subset of $V(G_1)$ obtained from $S$ after applying in $G_2$
	the operations that transform it to $G_1,$ i.e.,
	the subset of $V(G_1)$ that contains the resulting vertices after the contraction of edges
	with some endpoint in $S$ and the vertices of $S$ that
	are not removed or whose incident edges are not contracted
	while transforming $\mathbf{G}_2$ to $\mathbf{G}_1.$
	On the one hand, since $B_1 = B_2,$
	it holds that
	$\rho_1(S'\cap B_1)=\rho_2(S\cap B_2).$
	On the other hand, $\mathbf{G_1}\setminus S' \prem \mathbf{G_2} \setminus S$ and thus,
	due to \autoref{label_antipathetic}, $\mathbf{G_1}\setminus S' \leqslant_h \mathbf{G_2} \setminus S,$ which, in turn, implies that $\mathbf{G_1}\setminus S' \leqslant_h \mathbf{R}.$
	Hence, $\mathbf{p}_{k,I,\mathbf{R}} (\mathbf{G}_1)\leq w=\mathbf{p}_{k,I,\mathbf{R}} (\mathbf{G}_2).$
\end{proof}

Given $x,y\in \mathbb{N},$ we set $S_{x,y}$ to be the set of vectors of size $y$ whose elements are in $[x].$
Given an $n\in \mathbb{N}$ and two vectors $v=(v_1, \ldots, v_n)$ and $v'=(v_1 ', \ldots, v_n '),$ we say that $v\leq v'$ (resp. $v=v'$) if for every $i\in[n],$ $v_i \leq v_i '$ (resp. $v_i = v_i '$).
We say that a sequence $V= \langle v_1,\ldots, v_m \rangle$ of vectors is \emph{monotone} if for every $i,j\in [m]$ it holds that $i \leq j$ if and only if $v_i \leq v_j.$
It is easy to observe the following.

\begin{observation}\label{label_zusammenhang}
	Let $m,x,y\in \mathbb{N}.$
	For every monotone sequence $V = \langle v_1, \ldots, v_{m}\rangle$ of vectors  in $S_{x,y},$
	if $m\geq x\cdot y +1$ then there exists an $i\in [m-1]$ such that $v_i = v_{i+1}.$
\end{observation}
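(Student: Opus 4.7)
The plan is to prove the contrapositive via a monovariant argument on the sum of coordinates. Suppose, toward a contradiction, that $v_i \neq v_{i+1}$ for every $i \in [m-1]$. Because the sequence is monotone, $i \leq i+1$ forces $v_i \leq v_{i+1}$ in the coordinate-wise order, and combined with $v_i \neq v_{i+1}$ this yields a \emph{strict} coordinate-wise inequality: there exists at least one coordinate $j$ with $(v_i)_j < (v_{i+1})_j$, while no coordinate decreases.

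Define $\sigma(v) = \sum_{j=1}^y v_j$ for $v \in S_{x,y}$. Since passing from $v_i$ to $v_{i+1}$ increases at least one coordinate by at least one while no coordinate drops, we obtain $\sigma(v_{i+1}) \geq \sigma(v_i) + 1$ for every $i \in [m-1]$. Iterating this estimate gives
\[
\sigma(v_m) \;\geq\; \sigma(v_1) + (m-1).
\]
On the other hand, every coordinate of every $v_i$ lies in $[x]$, so $y \leq \sigma(v_1)$ and $\sigma(v_m) \leq xy$. Combining these bounds,
\[
m - 1 \;\leq\; \sigma(v_m) - \sigma(v_1) \;\leq\; xy - y,
\]
and therefore $m \leq xy - y + 1 \leq xy$ whenever $y \geq 1$, contradicting the hypothesis $m \geq xy + 1$. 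The degenerate cases $x=0$ or $y=0$ are vacuous since then $S_{x,y}$ contains at most one vector and the statement is trivial.

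The argument is completely elementary, and there is no real obstacle; the only subtlety is the correct reading of the definition of monotonicity, namely that the biconditional $i \leq j \iff v_i \leq v_j$ forces distinct consecutive vectors to be related by a \emph{strict} coordinate-wise inequality, which is what justifies the $+1$ jump in $\sigma$ at each step. Note that the bound $xy+1$ stated in the observation is actually not tight (the tight bound is $y(x-1)+2$), but this weaker form suffices for its intended use in the subsequent proof of \autoref{label_substanceless}.
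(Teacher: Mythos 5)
Your proof is correct. The paper offers no argument for this statement (it is asserted as an immediate observation), and your monovariant argument on the coordinate sum -- each step between distinct consecutive vectors of a monotone sequence raises $\sum_j (v_i)_j$ by at least one, while this sum is confined to $[y,\,x\cdot y]$ -- is exactly the standard pigeonhole-style justification one would supply; note also that it only uses the forward implication $i\leq j \Rightarrow v_i\leq v_j$ of the paper's definition of monotonicity, so it is valid under either reading of that biconditional, and the degenerate cases $x=0$ or $y=0$ are irrelevant for the statement's use, where $x=k+2$ and $y=|{\cal P}_{t,h}|\geq 1$.
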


We next prove that for every chain of ``many enough'' boundaried graphs that are ordered under the minor relation, there exist two boundaried graphs that have the same characteristic function.
\begin{lemma}\label{label_rechtfertigen}
	There exists a function $\newfun{label_ecclesiastes}:\mathbb{N}^2\to \mathbb{N}$ such that for every $k,t,h\in\mathbb{N},$ if $\mathbf{G}_1, \ldots, \mathbf{G}_d$ is a sequence of boundaried graphs
	where, for every $i\in[d-1],$ $\mathbf{G}_i \prem \mathbf{G}_{i+1}$ and $d\geq \funref{label_ecclesiastes}(k,t,h),$ then there exists an $i\in[d-1]$ such that,
	for every $(I,\mathbf{R})\in\mathcal{P}_{t,h},$ it holds that $\mathsf{char}_{\mathbf{G}_i}^{(k,h)}(I,\mathbf{R})=\mathsf{char}_{\mathbf{G}_{i+1}}^{(k,h)}(I,\mathbf{R}).$
	Moreover, $\funref{label_ecclesiastes}(k,t,h) = k\cdot 2^{\mathcal{O}(\funref{label_personnellement}(h) \cdot t \cdot \log t)}.$
\end{lemma}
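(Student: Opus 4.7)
The plan is to view each characteristic function ${\sf char}_{{\bf G}_i}^{(k,h)}$ as a single vector indexed by the set ${\cal P}_{t,h}$, with entries in $[0,k+1]$, and to apply Observation~\ref{label_zusammenhang} to the resulting sequence of vectors. The main tools are already in place: \autoref{label_substantiality} guarantees coordinate-wise monotonicity, and \autoref{label_desenfadadamente} bounds the length of the vector.

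First I would bound $|{\cal P}_{t,h}|$. By \autoref{label_desenfadadamente}, for every $t'\leq t$ we have $|{\cal R}_h^{(t')}|\leq 2^{\funref{label_personnellement}(h)\cdot t'\cdot \log t'}\leq 2^{\funref{label_personnellement}(h)\cdot t\cdot \log t}$. Summing over the $2^t$ choices of $I\subseteq [t]$ yields
\[
|{\cal P}_{t,h}|\ \leq\ 2^t\cdot 2^{\funref{label_personnellement}(h)\cdot t\cdot \log t}\ =\ 2^{{\cal O}(\funref{label_personnellement}(h)\cdot t\cdot \log t)}.
\]
Set $y:=|{\cal P}_{t,h}|$ and fix an arbitrary enumeration $(I_1,{\bf R}_1),\ldots,(I_y,{\bf R}_y)$ of ${\cal P}_{t,h}$. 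For every $i\in[d]$, let $v_i\in [0,k+1]^y$ be the vector whose $j$-th coordinate equals ${\sf char}_{{\bf G}_i}^{(k,h)}(I_j,{\bf R}_j)$.

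Next I would observe that the sequence $v_1,\ldots,v_d$ is monotone in the sense of \autoref{label_zusammenhang}. Indeed, by \autoref{label_substantiality}, the assumption ${\bf G}_i\prem{\bf G}_{i+1}$ implies that ${\sf char}_{{\bf G}_i}^{(k,h)}(I,{\bf R})\leq {\sf char}_{{\bf G}_{i+1}}^{(k,h)}(I,{\bf R})$ for every $(I,{\bf R})\in{\cal P}_{t,h}$; by transitivity, $v_i\leq v_j$ whenever $i\leq j$. After shifting entries by $1$, each $v_i$ lies in $S_{x,y}$ for $x=k+2$. Thus \autoref{label_zusammenhang} applies: if $d\geq x\cdot y+1$, then there exists $i\in[d-1]$ with $v_i=v_{i+1}$, which is exactly the desired equality of characteristic functions.

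Finally, setting
\[
\funref{label_ecclesiastes}(k,t,h)\ :=\ (k+2)\cdot 2^{{\cal O}(\funref{label_personnellement}(h)\cdot t\cdot \log t)}+1\ =\ k\cdot 2^{{\cal O}(\funref{label_personnellement}(h)\cdot t\cdot \log t)}
\]
yields the stated bound. The argument is essentially a pigeonhole on a monotone sequence in a finite product poset, so I do not expect a real obstacle; the only point that requires a modicum of care is confirming that the codomain of ${\sf char}_{{\bf G}}^{(k,h)}$ is truly bounded (it is, by $k+1$, by the definition that caps ${\bf p}_{k,I,{\bf R}}$ at $k+1$ when no witness of size $\leq k$ exists), so that the relevant vector space has the claimed finite size.
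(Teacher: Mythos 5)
Your proposal is correct and follows essentially the same route as the paper: encode each ${\sf char}_{{\bf G}_i}^{(k,h)}$ as a vector of length $|{\cal P}_{t,h}|$ with entries in $[0,k+1]$, invoke \autoref{label_substantiality} for monotonicity along the minor chain, bound $|{\cal P}_{t,h}|$ via \autoref{label_desenfadadamente}, and conclude by the pigeonhole argument of \autoref{label_zusammenhang} with threshold $(k+2)\cdot|{\cal P}_{t,h}|+1$. Your explicit summation over the $2^{t}$ choices of $I$ and the shift of entries to fit the definition of $S_{x,y}$ are minor bookkeeping steps the paper leaves implicit; nothing substantive differs.
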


\begin{proof}
	We set $y= |\mathcal{P}_{t,h}|$
	and  $\funref{label_ecclesiastes}(k,t,h) = (k+2)\cdot y+1.$
	By \autoref{label_desenfadadamente}, we have that $y = 2^{\mathcal{O}(\funref{label_personnellement}(h) \cdot t \cdot \log t)}.$
	For every $\mathbf{G}_i,$ we set $c_i$ to be the vector corresponding to $\mathsf{char}^{(k,h)}_{\mathbf{G}_i}$ and observe that it is a vector of size $y$ whose coordinates are elements in $[0,k+1].$
	By \autoref{label_substantiality}, $\langle c_1, \ldots, c_d\rangle$ is a monotone sequence and thus, by \autoref{label_zusammenhang}, since $d \geq (k+2)\cdot y+1$ there is an $i\in[d-1]$ such that $c_i = c_{i+1}.$
\end{proof}

\subsection{Tree decompositions of boundaried graphs}\label{label_hyperbolical}
In this subsection, we deal with tree decompositions of boundaried graphs.

Given a tree $T$ and two distinct vertices $a,b$ of $V(T),$ we denote by $aTb$ the unique $(a,b)$-path in $T.$
Given a graph $G$ and two sets $X,Y\subseteq V(G),$ a \emph{collection of $t$ vertex-disjoint paths between $X$ and $Y$} is a set of $t$ paths $P_1,\ldots, P_t,$ where, for every $i\in[t],$ one endpoint of $P_i$ is in $X$ and the other is in $Y$ and for every $i,j\in[t], i\neq j,$ $V(P_i)\cap V(P_j)= \emptyset.$

\paragraph{Rooted trees.}
A \emph{rooted tree} is a pair $(T,r),$ where $T$ is a tree and $r\in V(T).$ We call $r$ the \emph{root} of $T.$
Given two vertices $a,b$ of $T,$ we write $a\leq_{(T,r)} b$ to denote that $a\in V(rTb)$ and, in this case, we say that $b$ is a \emph{descendant} of $a$ in $(T,r).$
Given some $q\in V(T),$ we denote the \emph{set of descendants} of $q$ in $(T,r)$ as $\mathsf{desc}_{T,r} (q).$
The \emph{children} of a vertex $q\in V(T)$ in $(T,r)$ are the descendants of $q$ in $(T,r)$ that are adjacent to $q$ in $T.$
A rooted tree $(T,r)$ is \emph{binary} if every vertex of $T$ has at most two children.

\paragraph{Treewidth of boundaried graphs.}
Let $\mathbf{G}=(G,B,\rho)$ be a boundaried graph.
A \emph{tree decomposition} of $\mathbf{G}$ is a triple $(T,\chi, r)$
where $(T,\chi)$ is a tree decomposition of $G$ and $r$ is a vertex of $T$ such that $\chi(r)=B.$
The \emph{width} of $(T,\chi, r)$ is the width of $(T,\chi).$
The treewidth of a boundaried graph $\mathbf{G}$ is the minimum width over all its tree decompositions and is denoted by $\mathsf{tw}(\mathbf{G}).$

Let $\mathbf{G}=(G,B,\rho)$ be a boundaried graph and $(T,\chi, r)$ be a tree decomposition of $\mathbf{G}.$
For every $q\in V(T),$ we set $T_q= T[\mathsf{desc}_{T,r} (q)]$ and $G_q = G[\bigcup_{w\in V(T_q)} \chi(w)].$
Notice that if $a,b\in V(T)$ and $a\leq_{(T,r)} b,$ then $G_b$ is a subgraph of $G_a.$
We also define the $t_q$-boundaried graph $\bar{\mathbf{G}}_q = (\bar{G}_q, \chi(q), \rho_q),$ where $\bar{G}_q = G\setminus (V(G_q)\setminus \chi(q)).$
Notice that $\mathbf{G}_q$ and $\bar{\mathbf{G}}_q$ are compatible and $\mathbf{G}_q \oplus \bar{\mathbf{G}}_q = G.$
\medskip

Our next step is to use a special type of tree decompositions, namely \textsl{linked tree decompositions}, defined by Robertson and Seymour in~\cite{RobertsonS86GMV}.
Thomas in~\cite{Thomas90amen} proved that every graph $G$ admits a linked tree decomposition of width $\mathsf{tw}(G)$ (see also \cite{BellenbaumD02twos,Erde18auni}).
By combining the result of \cite{Thomas90amen} and \cite[Lemmas 4 and 6]{ChatzidimitriouTZ20spar},
we can consider tree decompositions  as asserted in the following result.
\begin{proposition}\label{label_consecuencia}
	Let $t\in\mathbb{N}_{\geq 1}.$
	For every boundaried graph $\mathbf{G}=(G,B,\rho)$ of treewidth $t-1,$
	there exists a tree decomposition $(T,\chi,r)$ of $\mathbf{G}$ of width $t-1$ such that
\begin{enumerate}

		\item $(T,r)$ is a binary tree,

		\item  for every $a,b\in V(T)$
		      where $a$ is a child of $b$ in $(T,r),$ if $|\chi(a)|=|\chi(b)|$
		      then $G_a$ is a \textsl{proper} subgraph of $G_b,$
		      i.e., $|V(G_a)|<|V(G_b)|,$

		\item for every $s\in\mathbb{N}$ and every pair $u_1, u_2\in V(T),$ where $u_1\leq_{(T,r)} u_2$ and $|\chi(u_1)|=|\chi(u_2)|,$
		      either there is an internal vertex $w$ of $u_1 T u_2$ such that $|\chi(w)|< s,$
		      or there exists a collection of $s$ vertex-disjoint paths in $G$
		      between $\chi(u_1)$ and $\chi(u_2),$ and
		      
		\item $|V(G)|\leq t\cdot |V(T)|.$
	\end{enumerate}
\end{proposition}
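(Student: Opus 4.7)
The plan is to combine Thomas's theorem on linked tree decompositions~\cite{Thomas90amen} with Lemma~4 of~\cite{ChatzidimitriouTZ20spar}, which handles the structural refinements. The four target properties will be secured in sequence while the linked property is maintained throughout.

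First, I would apply Thomas's theorem to obtain a tree decomposition of ${\bf G}$ of width $t-1$, rooted at some $r$ with $\chi(r)=B$, that is linked in the Robertson--Seymour sense: for any two nodes $u_1, u_2 \in V(T)$ and any $s\in\Bbb{N}$, either the path $u_1 T u_2$ contains an edge $xy$ with $|\chi(x)\cap\chi(y)|<s$, or there is a collection of $s$ vertex-disjoint paths in $G$ between $\chi(u_1)$ and $\chi(u_2)$. This provides a starting decomposition with a strong separator property encoded on the edges of $T$.

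Second, I would apply Lemma~4 of~\cite{ChatzidimitriouTZ20spar}, which performs two refinements without increasing the width. The first is \emph{binarization}: each node with more than two children is replaced by a chain of nodes all sharing the same bag, yielding property~1. The second is a \emph{redundancy cleanup}: iteratively, whenever a child $a$ of $b$ satisfies $V(G_a)=V(G_b)$, the node $a$ is removed and its children are reattached to $b$. A short argument using the subtree-connectivity axiom of tree decompositions shows that in this case $\chi(a)=\chi(b)$, so the operation preserves validity; exhaustive application yields property~2. Property~4 is then immediate since each bag has at most $t$ vertices and $V(G)=\bigcup_{v\in V(T)}\chi(v)$.

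The main obstacle, and the technical core of Lemma~4 of~\cite{ChatzidimitriouTZ20spar}, is verifying that the linked property survives these modifications in the bag-size form of item~3. Two observations carry the argument: binarization only inserts nodes whose bag equals that of an existing node, so it cannot create a new small bag on any path between original nodes; and cleanup only deletes nodes that are duplicates of a neighbor, so no genuinely small bag on a path can be lost. To convert Thomas's edge-intersection condition into the stated internal-vertex condition when $|\chi(u_1)|=|\chi(u_2)|$, one notes that on the path $u_1 T u_2$ the smallest intersection along an edge is witnessed, via the subtree-connectivity axiom, by a node with bag of at most that size, so an edge with intersection $<s$ yields an internal vertex $w$ with $|\chi(w)|<s$. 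Once these transfer arguments are in place, the refined decomposition satisfies all four properties simultaneously.
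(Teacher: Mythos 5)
Your overall route is the same as the paper's: the statement is obtained by combining Thomas's theorem on linked tree decompositions \cite{Thomas90amen} with Lemma~4 of \cite{ChatzidimitriouTZ20spar}, which is exactly what the paper does (it cites these two results and does not reprove them), so the skeleton of your argument is fine.

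However, the step you add to bridge the two results is wrong as stated, and it is precisely the delta between what leanness gives and what property~3 asserts. From an edge $xy$ of $u_1Tu_2$ with $|\chi(x)\cap\chi(y)|<s$ you claim, ``via the subtree-connectivity axiom,'' an internal vertex $w$ with $|\chi(w)|<s$. The connectivity axiom gives no such node: adjacent bags can both be large while their intersection is small (think of a path decomposition in which every bag has size $10$ and consecutive bags share only $2$ vertices), so a small adhesion on the path does not produce a small bag on the path. Converting the adhesion (lean) form of linkedness into the bag form of property~3 requires an actual construction step --- e.g., subdividing each tree edge and assigning to the new node the bag $\chi(x)\cap\chi(y)$, after which properties~1, 2 and~4 must be re-verified for the modified tree --- or one simply takes Lemma~4 of \cite{ChatzidimitriouTZ20spar}, which already delivers the statement with the bag-size condition. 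A smaller inaccuracy in the same spirit: in your cleanup step, $V(G_a)=V(G_b)$ only yields $\chi(b)\subseteq\chi(a)$ (every vertex of $\chi(b)$ occurring below $a$ must lie in $\chi(a)$ by connectivity), not $\chi(a)=\chi(b)$; deleting $a$ in that generality can destroy edge coverage or the connectivity condition for vertices of $\chi(a)\setminus\chi(b)$. The deletion is only safe (and only needed for property~2) in the case $|\chi(a)|=|\chi(b)|$, where the inclusion plus equal cardinality does give $\chi(a)=\chi(b)$.
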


In fact, linked tree decompositions are defined as the tree decompositions satisfying only property $(3)$ \cite{RobertsonS86GMV,Thomas90amen}.
In our proofs, we will need the extra properties $(1),$ $(2),$ and $(4)$ that are provided by \cite[Lemmas 4 and 6]{ChatzidimitriouTZ20spar}.

\subsection{Bounding the order of an obstruction of small treewidth}\label{label_commemorative}

In this subsection, we prove \autoref{label_substanceless}.
For this, we also need the following result ({for a proof see e.g. \cite[Lemma 14]{GiannopoulouPRT19cutw}}).

\begin{proposition}\label{label_accroissement}
	Let $r,m\in\mathbb{N}_{\geq 1}$ and $w$ be a word of length $m^r$ over the alphabet $[r].$
	Then there is a number $k\in[r]$ and a subword $u$ of $w$ such that $u$ contains only numbers not smaller than $k$ and $u$ contains the number $k$ at least $m$ times.
\end{proposition}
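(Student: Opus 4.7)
The plan is to prove \autoref{label_accroissement} by induction on $r$. The base case $r=1$ is immediate: a word of length $m^1=m$ over the one-letter alphabet $[1]$ is just a string of $m$ copies of the symbol $1$, so choosing $k=1$ and $u=w$ satisfies both requirements trivially.

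For the inductive step, assuming the statement for words over alphabets of size $r-1$, I would analyse a word $w$ of length $m^r$ over $[r]$ according to the number of occurrences of the smallest letter $1$. If $1$ occurs at least $m$ times in $w$, I would simply set $k=1$ and $u=w$: every letter in $w$ is at least $1$ and by assumption $1$ appears $\geq m$ times. Otherwise, $1$ occurs at most $m-1$ times in $w$; removing these occurrences splits $w$ into at most $m$ maximal factors, each of which is a subword of $w$ using only letters from $\{2,\ldots,r\}$. The total length of these factors is at least $m^r-(m-1)$, so by the pigeonhole principle one of them, call it $w'$, has length at least $\lceil (m^r-m+1)/m\rceil$. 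A short arithmetic check ($m^{r-1}-1+1/m$ rounds up to $m^{r-1}$ when $m\geq 1$) shows that $|w'|\geq m^{r-1}$.

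Now $w'$ is a word of length at least $m^{r-1}$ over the $(r-1)$-letter alphabet $\{2,3,\ldots,r\}$. Relabelling via $i\mapsto i-1$, the inductive hypothesis applied to any length-$m^{r-1}$ prefix of $w'$ yields a letter $k'\in[r-1]$ and a subword $u'$ of $w'$ in which every symbol is at least $k'$ (in the relabelled alphabet) and the symbol $k'$ occurs at least $m$ times. Translating back to the original alphabet by setting $k:=k'+1\in\{2,\ldots,r\}\subseteq[r]$ and letting $u:=u'$ (which is also a subword of $w$, since $w'$ is), every letter of $u$ is at least $k$ and $k$ appears at least $m$ times in $u$, completing the induction.

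There is no serious obstacle here beyond the pigeonhole bookkeeping in the inductive step: the delicate point is verifying that the floor/ceiling estimate truly delivers a subword of length at least $m^{r-1}$ after the $1$'s are deleted, which reduces to the elementary inequality $\lceil (m^r-m+1)/m\rceil\geq m^{r-1}$. Everything else is routine relabelling of the alphabet.
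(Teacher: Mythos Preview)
Your induction on $r$ is correct. The base case is clear, and in the inductive step the pigeonhole estimate is sound: with at most $m-1$ occurrences of the letter $1$ you get at most $m$ maximal $1$-free factors whose total length is at least $m^r-(m-1)$, so one factor has integer length at least $\lceil (m^r-m+1)/m\rceil = m^{r-1}$; the inductive hypothesis (after the shift $i\mapsto i-1$) then finishes the argument.

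The paper does not include its own proof of this proposition: it merely cites \cite[Lemma~14]{GiannopoulouPRT19cutw}. Your argument is exactly the natural proof one would expect for this pigeonhole-type lemma, and there is nothing to add beyond the bookkeeping you already addressed.
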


Note that a word of length $m^r$ over the alphabet $[r]$ can equivalently be seen as an element of~$S_{r,m^r}.$
We are now ready to prove \autoref{label_substanceless}.

\begin{proof}[Proof of \autoref{label_substanceless}]
	Let $G\in \mathbf{obs}(\mathcal{A}_k(\mathbf{excl}(\mathcal{F})).$
	We set $t:=\mathsf{tw}(G)+1.$
	For simplicity, we use $\ell$ instead of $\ell_{\mathcal{F}}.$
	We set
	\begin{align*}
		d:=                                   & \ \funref{label_ecclesiastes}(k,t,\ell), \\
		m:=                                   & \ {(2^{\binom{t}{2}}+1)\cdot d},         \\
		x:=                                   & \ m^{t}, \mbox{ and}                     \\
		\funref{label_schwankenden}(t,\ell):= & \ t\cdot 2^x.
	\end{align*}
	Suppose that $|V(G)|> \funref{label_schwankenden}(t,\ell).$
	Let $(T,\chi)$ be a tree decomposition of $G$ of width $\mathsf{tw}(G)$ and let $r\in V(T).$
	We consider the rooted tree $(T,r)$ and we set $B:= \chi(r)$ and a bijection $\rho:B\to [|B|].$
	We set $\mathbf{G}=(G,B,\rho)$ and observe that $(T,\chi, r)$ is a tree decomposition of $\mathbf{G}$ of width $\mathsf{tw}(G).$
	Since $\mathsf{tw}(\mathbf{G})= \mathsf{tw}(G) = t-1,$ by \autoref{label_consecuencia}, there exists a tree decomposition $(T,\chi,r)$ of $\mathbf{G}$ of width $t-1$ such that Properties (1) to (4) are satisfied.

	Since $|V(G)|> \funref{label_schwankenden}(t,\ell)= t\cdot 2^x,$ Property (4) implies that $|V(T)|> 2^x.$
	Also, by  Property (1), $(T,r)$ is a binary tree and therefore there exists a leaf $u$ of $T$ such that
	$|V(rTu)|\geq x.$
	We set $\ell:=|V(rTu)|.$

	We set $v_1= r$ and for every $i\in [\ell-1],$ we set $v_{i+1}$ to be the child of $v_i$ in $(T,r)$ that belongs to $V(rTu).$
	Keep in mind that $v_\ell = u.$
	For every $i\in [\ell],$ we set $c_i:=|\chi(v_i)|$ and observe that, since $(T,\chi, r)$ has width $t-1,$ $c_i\in[t].$

	Let $C$ be the word $c_1\cdots c_x.$
	Since $x=m^{t}$ and every $c_i \in [t],$ then, due to \autoref{label_accroissement},
	there is a $t'\in[t]$ and a subword $C'$ of $C$ such that,
	for every $c$ in $C',$ $c\geq t'$ and there are at least $m$ numbers in $C'$
	that are equal to $t'.$
	Therefore, there exists a set $\{z_1,\ldots, z_m\}\subseteq V(T)$ such that for every
	$i\in[2,m],$ $z_i$ is a descendant of $z_{i-1}$ in $(T,r),$ for every $z'\in V(z_1 T z_m)$
	it holds that $|\chi (z')|\geq t',$ and, for every $i\in[m],$ $|\chi( z_i )| = t'.$
	Hence, Property (3) of the tree decomposition $(T,\chi,r)$ of $\mathbf{G}$ implies that
	there exists a collection $\mathcal{P}=\{{P}_1,\ldots, {P}_{t'}\}$
	of $t'$ vertex-disjoint paths in $G$ between $\chi (z_1)$ and $\chi (z_m).$

	For every $i\in [m],$ let $\rho_i$ be the function mapping a vertex $v$ in $\chi(z_i)$ to the index of the path of $\mathcal{P}$
	it intersects, i.e., for every $j\in [t'],$ if  $v$ is a vertex in
	$V(P_j)\cap \chi(z_i),$ where $P_{j}\in\mathcal{P},$ then
	$\rho_i (v)= j.$
	Also, for every $i\in[m],$ let $\mathbf{G}_{z_i}$ be the $t'$-boundaried graph
	$(G_{z_i}, \chi(z_i), \rho_i).$
	Since, $m=(2^{\binom{t}{2}}+1)\cdot d,$ there is a set $J\subseteq [m]$ of size $d$ such that for every $i,j\in J,$ the graph $G_{z_i}[\chi(z_i)]$ is isomorphic to the graph $G_{z_j}[\chi(z_j)].$
	Therefore, for every $i,j\in J,$ $\mathbf{G}_{z_i}$ and $\mathbf{G}_{z_j}$ are compatible.
	Furthermore, observe that for every $i,j\in J$ with $i\leq j,$ $\mathbf{G}_{z_i}\prem \mathbf{G}_{z_j}.$
	To see why this holds, for every $i,j\in J$ with $i< j,$ let $\mathcal{P}_{i,j}$ be the collection of subpaths of $\mathcal{P}$ between the vertices of  $\chi(z_i)$ and $\chi(z_j)$ and consider the graph $G_{z_i}[\chi(z_i)] \cup \cupall\mathcal{P}_{i,j} \cup G_{z_j},$ that is a subgraph of $G_{z_i}.$
	By contracting the edges in $\mathcal{P}_{i,j},$ we obtain a boundaried graph isomorphic to $\mathbf{G}_{z_j}.$
	Also, recall that $|J| = d= \funref{label_ecclesiastes}(k,t,\ell).$
	Thus, by \autoref{label_rechtfertigen}, there exist $i,j\in J$ such that $j$ is the smallest element in $J$ that is greater than $i$ and
	$\mathsf{char}_{\mathbf{G}_{z_{i}}}^{(k,\ell)} = \mathsf{char}_{\mathbf{G}_{z_{j}}}^{(k,\ell)}.$
	For simplicity, we set $a:=z_{i}$ and $b:=z_{j}.$
	Notice that, in $G,$ by contracting the edges of the paths in $\mathcal{P}$ and removing the vertices of $G_a$ that are not vertices of $G_b,$ we obtain a graph isomorphic to $\bar{\mathbf{G}}_{a} \oplus \mathbf{G}_{b}.$
	Therefore, $\bar{\mathbf{G}}_{a} \oplus \mathbf{G}_{b}$ is a minor of $G.$
	Furthermore, $|V(\bar{\mathbf{G}}_a\oplus \mathbf{G}_b)|<|V(G)|.$
	To prove this, we argue that $G_b$ is a proper subgraph of $G_a.$
	First recall that for every $y\in V(a T b),$ $|\chi(y)|\geq t'.$
	If there is a $y\in V(aT b)$ such that $|\chi(y)|>t',$ then there is a vertex $v\in \chi(y)$ that is a vertex of $V(G_a)\setminus V(G_b)$ and thus $G_b$ is a proper subgraph of $G_a,$ while in the case where for every $y\in V(a T b),$ $|\chi(y)| = t',$ Property (2) implies that $G_b$ is a proper subgraph of $G_a.$

	Let $G' = \bar{\mathbf{G}}_a \oplus \mathbf{G}_b.$ Since  $|V(G')|<|V(G)|,$ $G'$ is a minor of $G,$ and $G\in \mathbf{obs}(\mathcal{A}_k (\mathbf{excl}(\mathcal{F}))),$ it holds that $G'\in\mathcal{A}_k (\mathbf{excl}(\mathcal{F})).$
	Therefore, there exists a set $S\subseteq V(G')$ of size $k$ such that $G'\setminus S\in \mathbf{excl}(\mathcal{F}).$
	Let $S_\mathrm{in} = S\cap V(G_b)$ and $S_\mathrm{out} = S\setminus S_\mathrm{in}.$
	We set $I_S:=\rho_b(\chi(b)\cap S_\mathrm{in}),$ $\mathbf{R}$ to be the $t'$-boundaried graph in $\mathcal{R}_\ell^{(t')}$ that is $\ell$-equivalent to $\mathbf{G}_b \setminus S_\mathrm{in},$ and $w:=\mathsf{char}_{\mathbf{G}_b}^{(k,\ell)}(I_S,\mathbf{R}).$
	Observe that $w\in[|S_\mathrm{in}|].$
	The fact that
	$\mathsf{char}_{\mathbf{G}_a}^{(k,\ell)} =\mathsf{char}_{\mathbf{G}_b}^{(k,\ell)}$ implies that
	$\mathsf{char}_{\mathbf{G}_b}^{(k,\ell)}(I_S,\mathbf{R}) =w.$
	Therefore, there exists a set $S'\subseteq V(G_a)$ such that $|S'| = w ,$
	$\rho_a (\chi(b)\cap S') = I_S,$ and $\mathbf{G}_a \setminus S' \leqslant_\ell \mathbf{R}.$
	Since $\mathbf{R}\equiv_{\ell,t'} \mathbf{G}_b \setminus S_\mathrm{in},$ the fact that $\mathbf{G}_a \setminus S' \leqslant_\ell \mathbf{R}$ implies that $\mathbf{G}_a \setminus S' \leqslant_\ell \mathbf{G}_b \setminus S_\mathrm{in}.$

	To conclude the proof, we argue that $G\setminus (S_\mathrm{out}\cup S')\in \mathbf{excl}(\mathcal{F}),$ which together with the fact that $|S_\mathrm{out}\cup S'|\leq |S| =k$ implies that $G\in\mathcal{A}_k (\mathbf{excl}(\mathcal{F})),$ a contradiction.
	Indeed, since $G\setminus (S_\mathrm{out}\cup S') = ( \bar{\mathbf{G}}_a \setminus S_\mathrm{out} ) \oplus (\mathbf{G}_a \setminus S')$ and $\mathbf{G}_a \setminus S' \leqslant_\ell \mathbf{G}_b \setminus S_\mathrm{in},$
	every graph $H\in\mathcal{F}$ that is a minor of
	$( \bar{\mathbf{G}}_a \setminus S_\mathrm{out} ) \oplus (\mathbf{G}_a \setminus S')$
	is also a minor of
	$( \bar{\mathbf{G}}_a \setminus S_\mathrm{out} ) \oplus (\mathbf{G}_b \setminus S_\mathrm{in})= G'\setminus S.$
	Consequently, $G'\setminus S\in \mathbf{excl}(\mathcal{F})$ implies that $G\setminus (S_\mathrm{out}\cup S')\in \mathbf{excl}(\mathcal{F}).$
\end{proof}

\section{Proof of \autoref{lemma_bidim_branch}}
\label{label_nominalistic}

In this section we prove a series of combinatorial  results.
In particular, in \autoref{label_acceptability} we prove a lemma (\autoref{label_automatisation}) that will be useful for the proof of
\autoref{label_emporteroient}, presented in \autoref{label_operationszeichen}.
The latter, together with a result proved in \autoref{label_reposadamente}, imply \autoref{lemma_bidim_branch}.

\subsection{Supporting combinatorial result}\label{label_acceptability}

Given a $(k\times r)$-grid $H$ with vertices $(x,y)
	\in[k]\times[r],$ and some $i\in [k],$
the \emph{$i$-th  vertical path} of $H$  is the one whose
vertices, in order of appearance, are $(i,1),(i,2),\ldots,(i,r).$
Also, given some $j\in[r],$ the \emph{$j$-th horizontal path} of $H$
is the one whose
vertices, in order of appearance, are $(1,j),(2,j),\ldots,(k,j).$

Given a $(n\times (2m+1))$-grid $H,$ we refer to the $(m+1)$-th horizontal path of $H$ as the \emph{middle horizontal path} of $H,$ which we denote by $P_{H}.$
Let $(i,j,j')\in[n]\times[-m,m]^2$ with $j\neq j'.$ We denote by $P_{i,j\rightarrow j'}$
the subpath of the $i$-th vertical path of $H$  starting from the vertex $(i, m+1+j)$
and finishing at $(i, m+1+ j').$
Let $(i,i',j)\in[n]^2\times[-m,m]$ with $i\neq i'.$ We denote by $P_{i\rightarrow i',j}$
the subpath of the $(m+1+j)$-th horizontal path of $H$ starting from the vertex $(i,m+1+j)$
and finishing at $(i',m+1+j).$ See \autoref{label_unreasonable} for an illustration of the above definitions.
\smallskip

\begin{figure}[ht]
	\begin{center}
		\includegraphics[width=4cm]{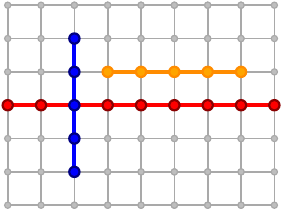}
	\end{center}
	\caption{A $(9\times 7)$-grid $H$ and the paths $\red{P_{H}},$ $\blue{P_{3,-2\to 2}},$ and $\orange{P_{4\to 8,1}}$ (depicted in red, blue, and orange, respectively).}
	\label{label_unreasonable}
\end{figure}

Given a path $P$ and three integers $r,h,d\geq 1,$
we say that a collection  ${\mathcal{C}}$  of subsets of $V(P)$ is \emph{$(r,h,d)$-scattered} in $P,$
if ${\mathcal{C}}=\{C_{1}, \ldots, C_{h}\},$ where for every $i\in[h]$ $C_{i}$ is a subset of $V(P)$ of cardinality $r,$
such that $\forall i,j\in [h],$  $C_{i}\cap C_{j}=\emptyset$ and $\forall u,v\in \cupall_{i\in [h]}C_{i},$ $\mathsf{dist}_{P}(u,v)> d$
\footnote{Given a graph $G$ and two vertices $u,v\in V(G),$ we define the \emph{distance between $u$ and $v$ in $G$}, denoted by $\mathsf{dist}_{G}(u,v),$ as the minimum number of edges in a path with $u,v$ as its endpoints.}.

We use the term $k$-\emph{grid} for the $(k\times k)$-grid. We say that a graph is a \emph{partially triangulated $r$-grid} if it can be obtained from an $r$-grid after adding edges in a way that the remaining graph remains planar.
We extend all above definitions of \emph{vertical, horizontal}, and \emph{middle horizontal path} of a grid to partially triangulated grids.
\medskip

\paragraph{Panchromatic contractions.} The purpose of this subsection and the next one is the proof of a lemma (\autoref{label_emporteroient})
on colored triangulated grids that, we believe, may have independent interest and applications. Our purpose is to prove that
for every $k,$ if $H$ is a big enough triangulated grid whose vertices are colored by some
fixed set of colors, so that each color appears many enough times in the sufficiently internal part
of $H,$ then $H$ can be contracted to a triangulated $k$-grid $R$ in a way that each vertex
of $R$ is the result of a ``panchromatic contraction'', in the sense that it is the result of the contraction
of vertices of \textsl{all} different colors. It also follows that the terms
``big enough'',  ``many enough'',  and ``sufficiently internal'' are quantified by functions that are polynomial in $k.$
This result is the combinatorial core of the proof of \autoref{lemma_bidim_branch}
that will follow in \autoref{label_reposadamente}.\medskip

The rest of this subsection is devoted to the proof of the following result, that intuitively states that given a big-enough grid $H$ and some colors for the vertices of the middle horizontal path of $H,$ if each color appears sufficiently many times in a scattered way, then we can contract $H$ to a large partially triangulated grid $R$ in which each vertex carries all colors.

\begin{lemma}\label{label_automatisation}
	There exist two functions  $\newfun{label_unthinkingly}:\mathbb{N}^3\to \mathbb{N}$ and $\newfun{label_unsuspecting}:\mathbb{N}\to \mathbb{N}$ such that for every $r,a,d\in\mathbb{N},$ with $d\geq 2r^{2},$
	if $H$ is a partially triangulated $(n\times m)$-grid with $n\geq \funref{label_unthinkingly}(r,a,d)$ and $m\geq \funref{label_unsuspecting}(r),$
	and ${\mathcal{C}}=\{C_{1}, \ldots, C_{a}\}$ is a collection of subsets of vertices of $P_{H}$ that  is $(r^{2},a,d)$-scattered in $P_{H},$
	then $H$ contains as a contraction a partially triangulated $r$-grid $R$ such that the model of each vertex of $R$ intersects every $C_{i}, i\in[a].$
	Moreover,  $\funref{label_unthinkingly}(r,a,d)=\mathcal{O}(r^2\cdot a \cdot d)$ and $\funref{label_unsuspecting}(r)=\mathcal{O}(r^{2}).$
\end{lemma}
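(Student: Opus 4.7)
The plan is to construct $r^{2}$ pairwise vertex-disjoint connected subgraphs---the models $M_{p,q}$ indexed by $(p,q)\in[r]\times[r]$---of $H$ such that each model contains exactly one vertex of every colour class $C_{i}$, and such that contracting each model to a single vertex produces a planar graph containing the $r$-grid as a spanning subgraph. The scattered hypothesis, read in the sense compatible with the bound $n={\cal O}(a(r^{2}+d))$, i.e.\ that vertices of distinct colour classes lie pairwise at distance $>d$ on $P_{H}$, forces $\bigcup_{i}C_{i}$ to lie on $P_{H}$ as $a$ disjoint ``colour blocks'', one per $C_{i}$, separated by gaps of length $>d\geq 2r^{2}$. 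Within the $i$-th block, I label the $r^{2}$ colored vertices in left-to-right order as $v^{i}_{1},\dots,v^{i}_{r^{2}}$ and let $s_{i}$ denote the column of $v^{i}_{1}$; then, setting $j:=(p-1)r+q$, I assign $v^{i}_{j}$ to the model $M_{p,q}$ for every $i\in[a]$, so that in every block the vertex of $M_{p,q}$ sits at the common relative offset $j-1$.

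With this canonical assignment, the problem reduces to routing, inside each gap between consecutive blocks $i$ and $i+1$, $r^{2}$ vertex-disjoint ``staircase'' paths in which the $j$-th path must join $(s_{i}+j-1,0)$ to $(s_{i+1}+j-1,0)$ using the rectangular strip above $P_{H}$ bounded by the two blocks. Because the endpoint pairings on the two sides are monotone in $j$ and the strip has horizontal slack $>d\geq 2r^{2}$ and vertical slack $m={\cal O}(r^{2})$, this is a classical monotone parallel-routing problem in a planar grid: I plan to give each $M_{j}$ a dedicated pair of ``mirror-symmetric'' rows $\rho_{j}^{L}:=r^{2}-j+1$ and $\rho_{j}^{R}:=j$, together with a ``switching column'' $c^{\star}(j)$ inside the gap chosen strictly monotonically in $j$; the staircase for $M_{j}$ then climbs at column $s_{i}+j-1$ to row $\rho_{j}^{L}$, travels right to $c^{\star}(j)$, switches to row $\rho_{j}^{R}$, travels right to $s_{i+1}+j-1$, and descends back to $P_{H}$. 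The horizontal grid adjacencies $M_{p,q}\sim M_{p,q+1}$ of the target are automatic, since indices $j,j+1$ correspond to $P_{H}$-adjacent columns inside every block, and the vertical grid adjacencies $M_{p,q}\sim M_{p+1,q}$ are obtained by inserting, within one designated block, short horizontal links just above $P_{H}$ joining the verticals of $M_{(p-1)r+q}$ and $M_{pr+q}$; any extra edges created only enrich the quotient $R$ while preserving planarity, so $R$ is a partially triangulated $r$-grid.

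The main obstacle is precisely ensuring that all $r^{2}$ staircases remain vertex-disjoint despite the fact that each $M_{j}$'s horizontal traversals must avoid both the vertical ``tails'' of the other $r^{2}-1$ models inside the two bordering blocks and the horizontal traversals of those same models above $P_{H}$; a naive single-row assignment per model is already obstructed, so the mirrored-row scheme above is needed and its conflict-freeness must be verified by a direct case analysis exploiting the monotonicity of $j\mapsto c^{\star}(j)$ and of both $\rho_{j}^{L}$ and $\rho_{j}^{R}$. If certain residual conflicts remain (in particular, between a switching vertical of one model and a left-horizontal of another when $j+j'\leq r^{2}+1$), they are absorbed by alternating the routing direction (above versus below $P_{H}$) across consecutive gaps, which halves the vertical load per side and gives the necessary headroom; a small additional vertical band inside the designated block is reserved for the $r^{2}-r$ horizontal links that realize the vertical grid adjacencies. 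The quantitative bounds $\funref{label_unthinkingly}(r,a,d)={\cal O}(a(r^{2}+d))$ and $\funref{label_unsuspecting}(r)={\cal O}(r^{2})$ then follow by counting, respectively, the $a$ blocks of width $r^{2}$ together with the $a-1$ gaps of width $\Theta(r^{2}+d)$ along $P_{H}$, and the ${\cal O}(r^{2})$ horizontal rows used by the staircases and the auxiliary linking band.
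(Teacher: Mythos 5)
There is a genuine gap, and it lies at the very first step: your reading of the $(r^{2},a,d)$-scattered hypothesis. You read it as ``vertices of distinct colour classes lie pairwise at distance $>d$'' and conclude that $\bigcup_i C_i$ appears on $P_H$ as $a$ monochromatic blocks of $r^2$ consecutive columns, in order, separated by long gaps. Neither part of this is available. The definition in the paper (and the way it is invoked both in the paper's proof and in the application inside the proof of the next lemma) requires \emph{every} pair of distinct vertices of $\bigcup_i C_i$ to be at distance $>d$ along $P_H$ --- in particular two vertices of the \emph{same} colour class are themselves more than $d\geq 2r^2$ apart, so they never occupy consecutive columns and your ``common relative offset $j-1$'' assignment inside a block has nothing to attach to. Moreover, nothing in the hypothesis constrains how the colour classes interleave along $P_H$: the bound $n\geq {\cal O}(a(r^2+d))$ is only a \emph{lower} bound on $n$, so $n$ may be arbitrarily large and the colours may alternate arbitrarily; no block structure can be inferred from it (and even under your weaker reading, interleaved colours with $>d$ spacing between consecutive cross-colour vertices are perfectly allowed). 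Since your entire construction --- the per-gap monotone parallel routing between consecutive blocks, the ``automatic'' horizontal adjacencies from consecutive columns inside a block, and the designated block carrying the vertical links --- presupposes this block structure, it only proves a much easier special case and does not address the actual difficulty, which is precisely the arbitrary interleaving of the $a$ classes along the middle path.

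For comparison, the paper's proof is organized around that difficulty: for each $j\in[r^2]$ it takes the set $X_j$ consisting of the $j$-th vertex (in column order) of \emph{every} colour class, and connects $X_j$ by a tree $T_j$ using a horizontal run at height $j$ above $P_H$ plus vertical drops, together with a top tail $s_j$ and a bottom tail $t_j$; crossings between $T_j$ and the vertical drops of later trees $T_{j'}$ (which occur exactly because the colours interleave) are eliminated by detours $U^j_h$ dipping to depth $r^2-j$ below $P_H$, and here is where $d\geq 2r^2$ is used to keep the detours disjoint. The contracted trees are then wired into the $r\times r$ pattern by serpentine connector gadgets $L_i^{\uparrow}$, $L_i^{\downarrow}$ attached to the tails $s_j,t_j$ above and below the strip. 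If you want to salvage your write-up, you would have to drop the block assumption and solve this general one-vertex-per-colour-per-model connection problem, at which point you are essentially forced into a construction of the paper's kind rather than per-gap monotone routing.
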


\begin{proof}
Let $r,a,d\in\mathbb{N}$ such that $d\geq 2r^2.$
We set $\funref{label_unthinkingly}(r,a,d)=r^{2}\cdot a \cdot (d+1)$
and $\funref{label_unsuspecting}(r)=2(r^{2}+r+1)+1.$
Let $H$ be a partially triangulated $(n\times m)$-grid with $n\geq \funref{label_unthinkingly}(r,a,d)$ and $m\geq \funref{label_unsuspecting}(r),$
and ${\mathcal{C}}=\{C_{1}, \ldots, C_{a}\}$ be a collection of subsets of vertices of $P_{H}$ that  is $(r^{2},a,d)$-scattered in $P_{H}.$
Notice that we ask $n\geq \funref{label_unthinkingly}(r,a,d)=r^{2}\cdot a \cdot (d+1),$
in order to allow the existence of the collection ${\mathcal{C}}$ in $P_H.$
Also, keep in mind that the middle horizontal path $P_H$ of $H$ is its $\lceil m/2\rceil$-th horizontal path.

We define a function $p:\cupall\mathcal{C}\to [n]$ that maps every vertex $v\in \cupall\mathcal{C}$ to an integer $i\in[n]$
	if $v$ belongs to the intersection of the $i$-th vertical path of $H$ with $P_{H}.$
	Intuitively, $p(v)$ indicates the position of  vertex $v$ on the middle horizontal path of $H.$
	Observe that since ${\mathcal{C}}$ is  $(r^{2},a,d)$-scattered,
	it follows that for every $ u,v\in\cupall\mathcal{C}$ with $u\neq v,$ it holds that $|p(u)-p(v)|>d.$
	We define the relation $<_{p}$ on the vertices of $\cupall\mathcal{C}$ such that for every $u,v\in \cupall\mathcal{C}, u<_{p} v$ if and only if $p(u)< p(v).$
	For every $i\in[a],$ we fix an ordering of the elements of $C_{i}$ with respect to $<_{p},$ i.e., $C_{i}=\{v_{1}^{i}, \ldots, v_{r^{2}}^{i}\}$ where for every $j,j'\in[r^{2}],$ $j<j'$if and only if $v_{j}^{i}<_{p} v_{j'}^{i}.$
	Intuitively, we can see the set $C_{i}$ as the vertices in $\cupall\mathcal{C}$ colored with color $i$
	and $v_{j}^{i}$ as the $j$-th vertex of color $i$ that we encounter while traversing $P_{H}$ from left to right.

	We now aim to construct the vertices of the desired $r$-grid $R.$
	To do this, we define a collection of pairwise vertex-disjoint trees
	that are subgraphs of $H$ and every tree contains a vertex of every $C_{i}.$
	The edges of each tree will be contracted to a single vertex that will be a vertex of $R.$

	Towards this, we first consider a partition $X_{1},\ldots, X_{r^{2}}$ of \ $\cupall{\mathcal{C}}$
	such that for every $j\in [r^{2}],$ $X_{j}:=\{v_{j}^{1},\ldots v_{j}^{a}\}.$
	Intuitively, each set $X_{j}$ contains the $j$-th vertex (with respect to the ordering defined by $<_{p}$)
	of each color (i.e., of each $C_{i}, i\in[a]$).
	Observe that for every $(i,j)\in[a]\times [r^{2}],$ $C_{i}\cap X_{j}=\{v_{j}^{i}\}.$
	In the eventual grid $R$ that will be constructed, the model of every vertex of the grid will contain a unique set $X_j$ and therefore, as each $X_j$ intersects every $C_i,$ the model of every vertex of the grid $R$ will intersect every $C_i, i\in[a],$ as claimed.

	For every $j\in[r^{2}],$ let $x_{j}^\mathsf{left}$ (resp. $x_{j}^\mathsf{right}$) be the vertex in $X_{j}$
	such that for every $x\in X_{j},$ if $x\neq x_{j}^\mathsf{left}$ (resp. $x\neq x_{j}^\mathsf{right}$)
	then $x_{j}^\mathsf{left}<_{p} x$ (resp. $x<_{p} x_{j}^\mathsf{right}$).
	For every $j\in[r^{2}],$ we set $T_{j}$ to be the graph
	\[P_{p(x_{j}^\mathsf{left})\to p(x_{j}^\mathsf{right}),j} \cup P_{p(x_{j}^\mathsf{left}),j\to r^{2}+1}\cup P_{p(x_{j}^\mathsf{right}),-(r^{2}+1)\to 0}\cup \bigcup_{x \in V(X_{j})}P_{p(x),0\to j} .\]
	Recall that $P_H$ is the $\lceil m/2\rceil$-th horizontal path of $H.$
	We set $s_{j}=(p(x_{j}^\mathsf{left}),\lceil m/2 \rceil + r^2+1)$ and $t_{j}=(p(x_{j}^\mathsf{right}), \lceil m/2 \rceil -(r^2+1)).$
	See \autoref{label_administered} for an illustration of the above definitions.
	Observe that $T_{j}$ is a tree whose leaves are the vertices in $(X_{j}\setminus \{x_{j}^\mathsf{right}\})\cup \{s_{j}, t_{j}\}.$
	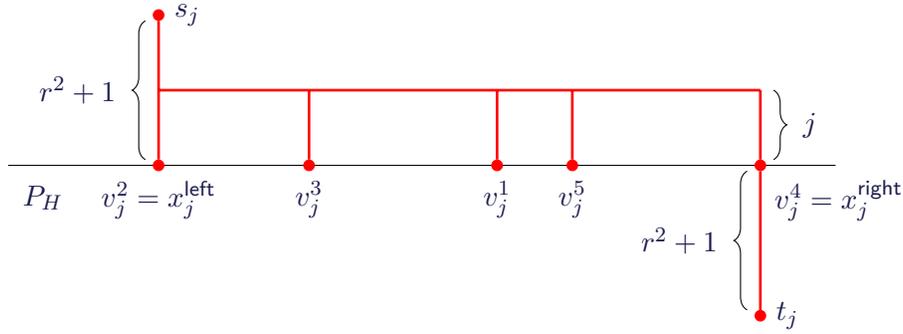
\begin{figure}[H]
		\centering
		\begin{tikzpicture}

			\draw[-] (-1,0) -- (10,0);
			\node[label=below left:{$P_H$}] (PH) at (0,0) {};

			\node[red node, label=right :{$s_j$}] (jup) at (1,2) {};
			\node[red node, label=below :{$v_j^2 = x_j^\mathsf{left}$}] (jleft) at (1,0) {};
			\node[red node, label=below :{$v_j^3$}] (vj3) at (3,0) {};
			\node[red node, label=below :{$v_j^1$}] (vj1) at (5.5,0) {};
			\node[red node, label=below :{$v_j^5$}] (vj5) at (6.5,0) {};
			\node[red node, label=below right:{$v_j^4 = x_j^\mathsf{right}$}] (jright) at (9,0) {};
			\node[red node, label=right :{$t_j$}] (jdown) at (9,-2) {};

			\draw[red, line width=1pt] (jup) -- (jleft) (jdown) -- (jright) (vj3) -- (3,1) (vj1) -- (5.5,1) (vj5) -- (6.5,1) (9,1) -- (jright) (1,1) -- (9,1);

			\draw [decorate,decoration={brace,amplitude=5pt, raise=5pt,mirror}] (jup) -- (jleft) node [black,midway,xshift = -0.3cm, label = left:{$r^2 +1$}] {};

			\draw [decorate,decoration={brace,amplitude=5pt, raise=5pt}] (9,1) -- (jright) node [black,midway,xshift = 0.3cm, label = right:{$j$}] {};

			\draw [decorate,decoration={brace,amplitude=5pt, raise=5pt}] (jdown) -- (jright) node [black,midway,xshift = -0.3cm, label = left:{$r^2 +1$}] {};
		\end{tikzpicture}
		\caption{Visualization of the graph $T_{j}$ (depicted in red) for $h=5.$}
		\label{label_administered}
	\end{figure}
	We stress that we can construct the graphs $T_{j}$ since $m\geq \funref{label_unsuspecting}(r).$
	Observe that every $T_{j}$ is a tree and for $j\neq j',$ $T_{j}$ and $T_{j'}$ are not necessarily vertex-disjoint.
	To get a collection of pairwise vertex-disjoint trees, we have to resolve possible intersections.

	Notice that if $j<j',$ then $T_{j}$  intersects $T_{j'}$ only in the vertices $(p(v_{j'}^{i}),j), i\in[a]$
	where $v_{j'}^{i} <_{p} x_{j}^\mathsf{right}$ (see \autoref{label_producedness}).
	For every $j \in[r^{2}-1]$ we set
	\[I_{j}=\{h\in[n]\mid \exists (i,j')\in[a]\times [j+1,r^{2}] : \ h=p(v_{j'}^{i}) \wedge v_{j'}^{i}<_{p} x_{j}^\mathsf{right} )\}.\]
	Intuitively, these are the positions (in $P_{H}$) of the vertices of every $T_{j'}, j'>j$
	that are on the left of $x_{j}^\mathsf{right}$ (see \autoref{label_producedness}).
	\begin{figure}[ht]
		\centering
		\begin{tikzpicture}

			\draw[-] (0,0) -- (12,0);
			\node[label=below:{$P_H$}] (PH) at (0,0) {};

			\node[red node] (jup) at (1,2) {};
			\node[red node, label=below:{$v_j^2$}] (jleft) at (1,0) {};
			\node[red node, label=below:{$v_j^3$}] (vj3) at (3.25,0) {};
			\node[red node, label=below:{$v_j^1$}] (vj1) at (5.5,0) {};
			\node[red node, label=below:{$v_j^5$}] (vj5) at (6.25,0) {};
			\node[red node, label=below right:{\!\!$v_j^4$}] (jright) at (8.5,0) {};
			\node[red node] (jdown) at (8.5,-2) {};
			\node[label = above :{$\red{T_{j}}$}] () at (1,2) {};
			\draw[red, line width=1pt] (jup) -- (jleft) (jdown) -- (jright) (vj3) -- (3.25,1) (vj1) -- (5.5,1) (vj5) -- (6.25,1) (8.5,1) -- (jright) (1,1) -- (8.5,1);

			\node[blue node] (iup) at (1.75,2) {};
			\node[blue node, label=below:{$v_{j'}^2$}] (ileft) at (1.75,0) {};
			\node[blue node, label=below:{$v_{j'}^3$}] (vi3) at (4,0) {};
			\node[blue node, label=below:{$v_{j'}^1$}] (vi5) at (7,0) {};
			\node[blue node, label=below:{$v_{j'}^5$}] (vi4) at (9.25,0) {};
			\node[blue node, label=below right:{\!\!$v_{j'}^4$}] (iright) at (10.75,0) {};
			\node[blue node] (idown) at (10.75,-2) {};
			\node[label =above:$\blue{T_{j'}}$] () at (1.75,2) {};
			\draw[blue, line width=1pt] (iup) -- (ileft) (idown) -- (iright) (vi3) -- (4,1.25) (vi5) -- (7,1.25) (vi4) -- (9.25,1.25) (10.75,1.25) -- (iright) (1.75,1.25) -- (10.75,1.25);

			\node[green node] (kup) at (2.5,2) {};
			\node[green node, label=below:{$v_{j''}^2$}] (kleft) at (2.5,0) {};
			\node[green node, label=below:{$v_{j''}^3$}] (vk3) at (4.75,0) {};
			\node[green node, label=below:{$v_{j''}^1$}] (vk5) at (7.75,0) {};
			\node[green node, label=below:{$v_{j''}^5$}] (vk4) at (10,0) {};
			\node[green node, label=below right:{\!\!$v_{j''}^4$}] (kright) at (11.5,0) {};
			\node[green node] (kdown) at (11.5,-2) {};
			\node[label = above:{$\green{T_{j''}}$}] () at (2.5,2) {};	
			\draw[Mygreen, line width=1pt] (kup) -- (kleft) (kdown) -- (kright) (vk3) -- (4.75,1.5) (vk5) -- (7.75,1.5) (vk4) -- (10,1.5) (11.5,1.5) -- (kright) (2.5,1.5) -- (11.5,1.5);

			\draw [decorate,decoration={brace,amplitude=4pt, raise=4pt}] (8.5,1) -- (jright) node [black,midway,xshift = 0.2cm, label = right:{\!\!$j$}] {};

			\draw [decorate,decoration={brace,amplitude=4pt, raise=4pt}] (10.75,1.25) -- (iright) node [black,midway,xshift = 0.2cm, label = right:{\!\!$j'$}] {};

			\draw [decorate,decoration={brace,amplitude=4pt, raise=4pt}] (11.5,1.5) -- (kright) node [black,midway,xshift = 0.2cm, label = right:{\!\!$j''$}] {};

		\end{tikzpicture}
		\caption{Visualization of the graphs $T_{j}$ (depicted in red), $T_{j'}$ (depicted in blue), and $T_{j''}$ (depicted in green) for $h=5.$
Here, if we assume that $j < j'<j'',$
then $I_{j}=\{p(v_{j'}^{2}), p(v_{j''}^{2}), p(v_{j'}^{3}), p(v_{j''}^{3}), p(v_{j'}^{1}), p(v_{j''}^{1})\}.$}
		\label{label_producedness}
	\end{figure}
	For every $h\in I_{j},$ we set $h^\mathsf{left}= h-(r^{2}-j),$ $h^\mathsf{right}=h+r^{2}-j,$ and
	$U_{h}^j$ to be the graph depicted  in \autoref{label_establecerla}. More precisely,
	\[U_{h}^j = P_{h^\mathsf{left},-(r^{2}-j)\to j}\cup P_{h^\mathsf{left}\to h^\mathsf{right},-(r^{2}-j)}\cup P_{h^\mathsf{right},-(r^{2}-j)\to j}.\]

\begin{figure}[H]
		\centering
		\begin{tikzpicture}
			\draw[-] (0,0) -- (8,0);
			\node[label=below:{$P_H$}] (PH) at (8,0) {};

			\node[blue node] (upleft) at (2,1) {};
			\node[blue node, label=above left:{$(h^\mathsf{left},\lceil m/2\rceil)$}] (left) at (2,0) {};
			\node[blue node] (downleft) at (2,-1.5) {};
			\node[black node,label=above:{$(h,\lceil m/2\rceil)$}] (center) at (4,0) {};
			\node[blue node] (upright) at (6,1) {};
			\node[blue node,label=above right:{$(h^\mathsf{right},\lceil m/2\rceil)$}] (right) at (6,0) {};
			\node[blue node] (downright) at (6,-1.5) {};

			\draw[blue, line width=1pt] (upleft) -- (left) -- (downleft) -- (downright) -- (right) -- (upright);

			\draw [decorate,decoration={brace,amplitude=5pt, raise=3pt}] (upleft) -- (left) node [black,midway,xshift = 0.1cm, label = right:{$j$}] {};

			\draw [decorate,decoration={brace,amplitude=5pt, raise=5pt,mirror}] (left) -- (downleft) node [black,midway,xshift = -0.3cm, label = left:{$r^2 -j$}] {};

			\draw [decorate,decoration={brace,amplitude=5pt, raise=5pt,mirror}] (left) -- (center) node [black,midway,yshift = -0.1cm, label = below:{$r^2 -j$}] {};

			\draw [decorate,decoration={brace,amplitude=5pt, raise=5pt,mirror}] (center) -- (right) node [black,midway,yshift = -0.1cm, label = below:{$r^2 -j$}] {};
		\end{tikzpicture}

		\caption{Visualization of the graph $U_{h}^j.$
			The vertices $(h^\mathsf{left},\lceil m/2\rceil),$ $(h,\lceil m/2\rceil),$ and $(h^\mathsf{right},\lceil m/2\rceil)$ are the vertices on the intersection of the path $P_H$ with the $h^\mathsf{left}$-th, $h$-th, and $h^\mathsf{right}$-th vertical path of $H,$ respectively.}
		\label{label_establecerla}
	\end{figure}
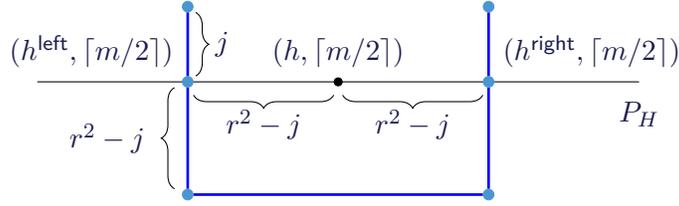

	Also, we set
	$T^{\star}_{j}$ to be the graph
	\[\left(T_{j}\setminus \bigcup_{h\in I_{j}} P_{h^\mathsf{left} \to h^\mathsf{right},j}\right)\cup \bigcup_{h\in I_{j}} U_{h}^j.\]
	Observe that, since ${\mathcal{C}}$ is $(r^{2},a,d)$-scattered and $d\geq 2r^2,$ $T_{1}^{\star}, \ldots, T_{r^{2}}^{\star}$ are pairwise vertex-disjoint trees each containing a vertex of every $C_{i}, i\in[a].$
	Indeed, any possible intersection between, say $T_i$ and $T_j,$ does not exist anymore when we reroute through the graphs $U_h^j, j\in[r^2], h\in I_j.$
	Moreover, no new intersections are created by the addition of the graphs $U_h^j,$ since by the fact that ${\mathcal{C}}$ is $(r^{2},a,d)$-scattered, $d\geq 2r^2,$ and by the construction of $U_h^j,$ every two $U_{h}^{j},U_{h'}^{j'},$ with $j\neq j'$ and $h\neq h',$ are disjoint. See \autoref{label_publications}.
	\begin{figure}[H]
		\centering
		\begin{tikzpicture}

			\draw[-] (0,0) -- (12,0);
			\node[label=below:{$P_H$}] (PH) at (0,0) {};

			\node[red node] (jup) at (1,2) {};
			\node[red node, label=below:{$v_j^2$}] (jleft) at (1,0) {};
			\node[red node, label=below:{$v_j^3$}] (vj3) at (3.25,0) {};
			\node[red node, label=below:{$v_j^1$}] (vj1) at (5.5,0) {};
			\node[red node, label=below:{$v_j^5$}] (vj5) at (6.25,0) {};
			\node[red node, label=below right:{\!\!$v_j^4$}] (jright) at (8.5,0) {};
			\node[red node] (jdown) at (8.5,-2) {};
			\node[red node] (1ul) at (1.45,1) {};
			\node[red node] (1dl) at (1.45,-1) {};
			\node[red node] (1ur) at (2.05,1) {};
			\node[red node] (1dr) at (2.05,-1) {};
			\node[red node] (2ul) at (2.25,1) {};
			\node[red node] (2dl) at (2.25,-1) {};
			\node[red node] (2ur) at (2.75,1) {};
			\node[red node] (2dr) at (2.75,-1) {};
			\node[red node] (3ul) at (3.7,1) {};
			\node[red node] (3dl) at (3.7,-1) {};
			\node[red node] (3ur) at (4.3,1) {};
			\node[red node] (3dr) at (4.3,-1) {};
			\node[red node] (4ul) at (4.5,1) {};
			\node[red node] (4dl) at (4.5,-1) {};
			\node[red node] (4ur) at (5.05,1) {};
			\node[red node] (4dr) at (5.05,-1) {};
			\node[red node] (5ul) at (6.7,1) {};
			\node[red node] (5dl) at (6.7,-1) {};
			\node[red node] (5ur) at (7.3,1) {};
			\node[red node] (5dr) at (7.3,-1) {};
			\node[red node] (6ul) at (7.5,1) {};
			\node[red node] (6dl) at (7.5,-1) {};
			\node[red node] (6ur) at (8.05,1) {};
			\node[red node] (6dr) at (8.05,-1) {};
			\node[label = above :{$\red{T_{j}^{\star}}$}] () at (1,2) {};
			\draw[red, line width=1pt] (jup) -- (jleft) (jdown) -- (jright) (vj3) -- (3.25,1) (vj1) -- (5.5,1) (vj5) -- (6.25,1) (8.5,1) -- (jright) (1,1) -- (1ul) -- (1dl) -- (1dr) -- (1ur) -- (2ul) -- (2dl) -- (2dr) -- (2ur) -- (3ul) -- (3dl) -- (3dr) -- (3ur) -- (4ul) -- (4dl) -- (4dr) -- (4ur) -- (5ul) -- (5dl) -- (5dr) -- (5ur) -- (6ul) -- (6dl) -- (6dr) -- (6ur) -- (8.5,1);

			\node[blue node] (iup) at (1.75,2) {};
			\node[blue node, label=below:{}] (ileft) at (1.75,0) {};
			\node[blue node, label=below:{}] (vi3) at (4,0) {};
			\node[blue node, label=below:{}] (vi5) at (7,0) {};
			\node[blue node, label=below:{}] (vi4) at (9.25,0) {};
			\node[blue node, label=below right:{}] (iright) at (10.75,0) {};
			\node[blue node] (idown) at (10.75,-2) {};
			\node[label =above:$\blue{T_{j'}^{\star}}$] () at (1.75,2) {};
			\node[blue node] (1ulb) at (2.35,1.25) {};
			\node[blue node] (1dlb) at (2.35,-0.75) {};
			\node[blue node] (1urb) at (2.65,1.25) {};
			\node[blue node] (1drb) at (2.65,-0.75) {};
			\node[blue node] (2ulb) at (4.6,1.25) {};
			\node[blue node] (2dlb) at (4.6,-0.75) {};
			\node[blue node] (2urb) at (4.95,1.25) {};
			\node[blue node] (2drb) at (4.95,-0.75) {};
			\node[blue node] (3ulb) at (7.6,1.25) {};
			\node[blue node] (3dlb) at (7.6,-0.75) {};
			\node[blue node] (3urb) at (7.95,1.25) {};
			\node[blue node] (3drb) at (7.95,-0.75) {};
			\node[blue node] (4ulb) at (9.85,1.25) {};
			\node[blue node] (4dlb) at (9.85,-0.75) {};
			\node[blue node] (4urb) at (10.15,1.25) {};
			\node[blue node] (4drb) at (10.15,-0.75) {};
			\draw[blue, line width=1pt] (iup) -- (ileft) (idown) -- (iright) (vi3) -- (4,1.25) (vi5) -- (7,1.25) (vi4) -- (9.25,1.25) (10.75,1.25) -- (iright) (1.75,1.25) -- (1ulb) -- (1dlb) -- (1drb) -- (1urb) -- (2ulb) -- (2dlb) -- (2drb) -- (2urb) -- (3ulb) -- (3dlb) -- (3drb) -- (3urb) -- (4ulb) -- (4dlb) -- (4drb) -- (4urb) -- (10.75,1.25);

			\node[green node] (kup) at (2.5,2) {};
			\node[green node, label=below:{}] (kleft) at (2.5,0) {};
			\node[green node, label=below:{}] (vk3) at (4.775,0) {};
			\node[green node, label=below:{}] (vk5) at (7.775,0) {};
			\node[green node, label=below:{}] (vk4) at (10,0) {};
			\node[green node, label=below right:{}] (kright) at (11.5,0) {};
			\node[green node] (kdown) at (11.5,-2) {};
			\node[label = above:{$\green{T_{j''}}$}] () at (2.5,2) {};	
			\draw[Mygreen, line width=1pt] (kup) -- (kleft) (kdown) -- (kright) (vk3) -- (4.775,1.5) (vk5) -- (7.775,1.5) (vk4) -- (10,1.5) (11.5,1.5) -- (kright) (2.5,1.5) -- (11.5,1.5);

		\end{tikzpicture}
		\caption{The trees $T_{j}^{\star}$ (depicted in red), $T_{j'}^{\star}$ (depicted in blue),  and $T_{j''}$ (depicted in green).}
		\label{label_publications}
	\end{figure}

	Towards the construction of the desired $r$-grid, we already mentioned that some trees would be contracted to single vertices.
	These trees are $T_{j}^{\star}, j\in[r^{2}].$
	Our aim now is to ``connect'' these vertices, obtained by the contraction of each $T_{j}^{\star}, j\in[r^{2}],$ in order to form the desired $r$-grid.

	Recall that, for every $j\in[r^2],$ $s_{j}=(p(x_{j}^\mathsf{left}),\lceil m/2 \rceil + r^2+1)$ and $t_{j}=(p(x_{j}^\mathsf{right}), \lceil m/2 \rceil -(r^2+1)).$
	For simplicity, we set $l^{\uparrow} = \lceil m/2 \rceil +r^{2}+1$ and $l^{\downarrow}=\lceil m/2 \rceil -(r^{2}+1).$
	Also, for every $j\in[r^2],$ we set $q_j^\mathsf{left} = p(x_{j}^\mathsf{left})$ and $q_j^\mathsf{right} = p(x_{j}^\mathsf{right}).$
	Therefore, for every $j\in[r^2],$ $s_{j}=(q_j^\mathsf{left},l^{\uparrow})$ and $t_{j}=(q_j^\mathsf{right},l^{\downarrow}).$

	Now, for every odd $i\in [r-1],$ we define $L_{i}^{\uparrow}$ to be the graph
	\begin{align*}
		\bigcup_{j\in[r]}\left(P_{q^\mathsf{left}_{(i-1)\cdot r +j},l^{\uparrow}\to l^{\uparrow}+j} \cup P_{q^\mathsf{left}_{(i-1)\cdot r+j}\to q^\mathsf{left}_{(i+1)\cdot r - j +1},l^{\uparrow}+j} \cup P_{q^\mathsf{left}_{(i+1)\cdot r - j +1},l^{\uparrow}\to l^{\uparrow} + j}\right) \\
		\cup  P_{q^\mathsf{left}_{(i-1)\cdot r +1}\to q^\mathsf{left}_{i\cdot r}, l^{\uparrow}} \cup P_{q^\mathsf{left}_{i\cdot r +1}\to q^\mathsf{left}_{(i+1)\cdot r}, l^{\uparrow}}.
	\end{align*}
	Also, for every even $i\in [r-1],$ we define $L_{i}^{\downarrow}$ to be the graph
	\begin{align*}
		\bigcup_{j\in[r]}\left(P_{q^\mathsf{right}_{(i-1)\cdot r +j},l^{\downarrow}\to l^{\downarrow}-j} \cup P_{q^\mathsf{right}_{(i-1)\cdot r+j}\to q^\mathsf{right}_{(i+1)\cdot r - j +1},l^{\downarrow}-j}\cup P_{q^\mathsf{right}_{(i+1)\cdot r - j +1},l^{\downarrow}\to l^{\downarrow} - j}\right) \\
		\cup  P_{q^\mathsf{right}_{(i-1)\cdot r +1}\to q^\mathsf{right}_{i\cdot r}, l^{\downarrow}} \cup P_{q^\mathsf{right}_{i\cdot r +1}\to q^\mathsf{right}_{(i+1)\cdot r}, l^{\downarrow}}.
	\end{align*}
	Then, we consider the graph $R^{\star}$
	\[
		\bigcup_{j\in[r^{2}]} T_{j}^{\star}\cup \bigcup_{\mbox{{\footnotesize odd} }i\in[r-1]}L_{i}^{\uparrow} \cup\bigcup_{\mbox{{\footnotesize even} }i\in[r-1]}L_{i}^{\downarrow}.
	\]
	See \autoref{label_justification} for an illustration of the above graphs.
	\begin{figure}[H]
		\centering
		\includegraphics[width=14.5cm]{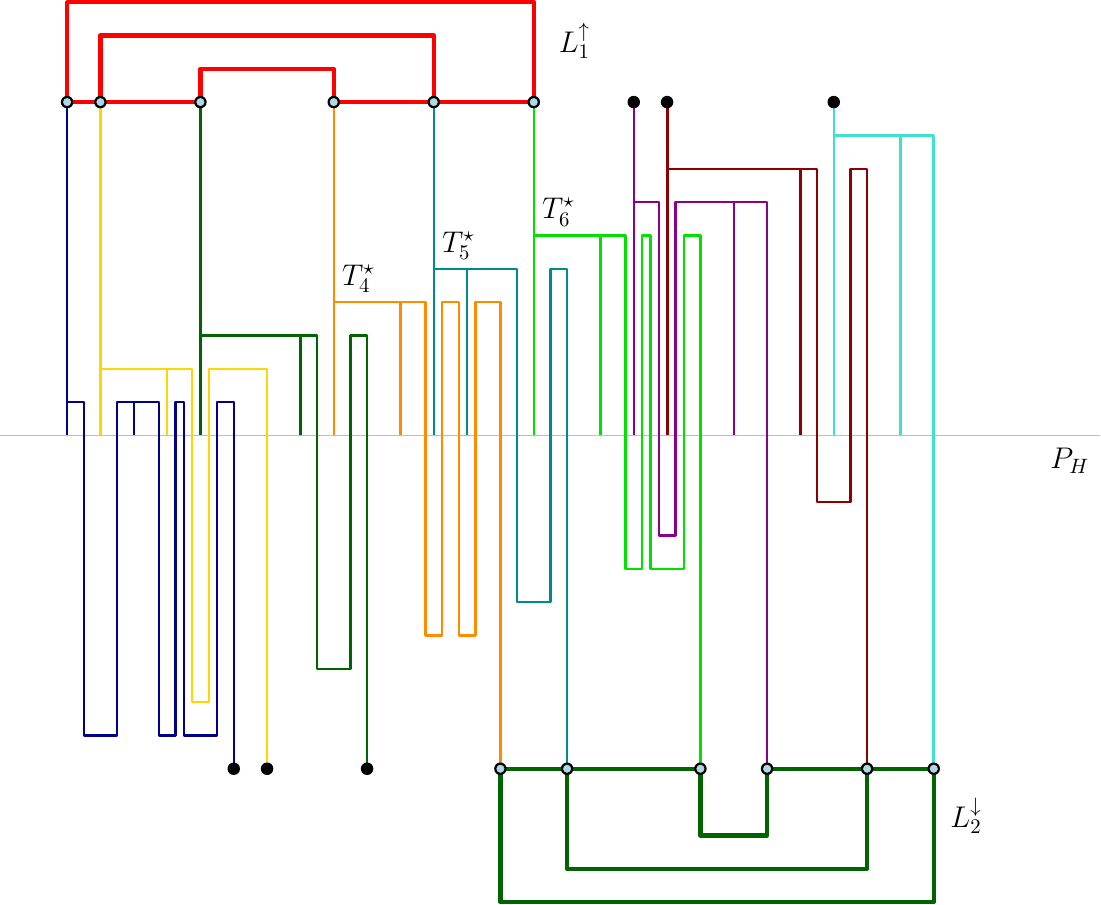}
		\caption{Visualization of the graph $R^{\star}.$ Notice that the trees $T_{4}^{\star}, T_{5}^{\star},$ and $T_{6}^{\star}$ (depicted in orange, blue, and green, respectively) intersect both $L_{1}^{\uparrow}$ and $L_{2}^{\downarrow}.$ 
		}
		\label{label_justification}
	\end{figure}

	We now consider the graph $\tilde{R}$ obtained from $R^{\star}$ if for every $i\in[r^{2}]$ we contract all edges of $T_{i}^{\star}$ and then we contract each path of $\bigcup_{\mbox{{\footnotesize odd} }i\in[r-1]}L_{i}^{\uparrow} \cup\bigcup_{\mbox{{\footnotesize even} }i\in[r-1]}L_{i}^{\downarrow}$ to an edge. We now prove the following:
	\medskip

	\noindent\emph{Claim:}  $\tilde{R}$ is an $r$-grid.\medskip

	\noindent\emph{Proof of the claim:}
	For every odd (resp. even) $i\in[r]$ and every $j\in[r],$ let $w_{i,j}$ be the vertex obtained after contracting the edges of $T_{(i-1)\cdot r+j}^{\star}$ (resp. $T_{i\cdot r -j +1}^{\star}$) and keep in mind that if $i$ is odd (resp. even), then the model of $w_{i,j}$ in $H$ contains $s_{(i-1)\cdot r +j}$ (resp. $t_{i\cdot r -j +1}$).
	We assume that $V(\tilde{R}) = \bigcup_{i,j\in [r]} \{w_{i,j}\}$ and we argue that, for every $i\in [r]$ and every $j\in[r],$ $w_{i,j}$ is adjacent, in $\tilde{R},$ to $w_{i,j-1}$ (if $j>1$), to $w_{i,j+1}$ (if $j<r$), to $w_{i-1,j}$ (if $i>1$), and to $w_{i+1,j}$ (if $i<r$).
	This implies that $\tilde{R}$ is an $r$-grid.
	We first show that for every $i\in[r]$ and every $j\in[r],$ $w_{i,j}$ is adjacent, in $\tilde{R},$ to $w_{i,j-1},$ if $j>1,$ and to $w_{i,j+1},$ if $j<r.$
	For this, observe that if $i$ is even (resp. odd) then the vertex $s_{(i-1)\cdot r+j}$ (resp. $t_{i\cdot r -j+1}$) is connected through $L_i^{\uparrow}$ (resp. $L_i^{\downarrow}$) with the vertex $s_{(i-1)\cdot r + j+ 1}$ (resp. $t_{i\cdot r -j +2}$), if $j<r,$ and  the vertex $s_{(i-1)\cdot r+j-1}$ (resp. $t_{i\cdot r-j}$), if $j>1.$
	Therefore, since if $i$ is even (resp. odd), then the model of $w_{i,j}$ in $H$ contains $s_{(i-1)\cdot r +j}$ (resp. $t_{i\cdot r -j +1}$), we have that $w_{i,j}$ is adjacent, in $\tilde{R},$ to $w_{i,j-1}$ (if $j>1$) and to $w_{i,j+1}$ (if $j<r$).
	Also, notice that, if $i$ is even, the vertex $s_{(i-1)\cdot r+j}$ is connected through $L_i^{\uparrow}$ to the vertex $s_{(i+1)\cdot r-j+1},$ that is a vertex in the model of $w_{i+1,j}$ in $H.$
	If $i$ is odd, then the vertex $t_{i\cdot r -j + 1}$ is connected through $L_i^{\downarrow}$ with $t_{(i +1)\cdot r +j},$ that is a vertex in the model of $w_{i+1,j}$ in $H.$
	The claim follows.\hfill$\diamond$
	\medskip

	By the claim above, $\tilde{R}$ is an $r$-grid.
	If we further contract every edge that is adjacent to a vertex of $V(H)\setminus V(\tilde{R}),$ we obtain a partially triangulated $r$-grid $R$ as the desired one.
\end{proof}

\subsection{Finding a complete apex grid}\label{label_operationszeichen}

\paragraph{Central grids.}
Let  $k,r\in\mathbb{N}_{\geq 2}.$
We define the \emph{perimeter} of a $(k\times r)$-grid to be the unique cycle of the grid of length at least three that does not contain vertices of degree four.

\begin{figure}[ht]
	\centering
	\includegraphics[width=3.5cm]{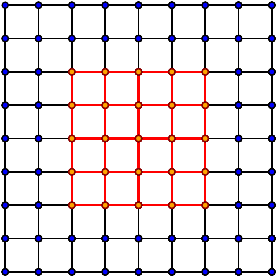}
	\caption{A 9-grid and its central 5-grid.}
	\label{label_administrativas}
\end{figure}

Let $r\in \mathbb{N}_{\geq 2}$ and $H$ be an $r$-grid.
Given an $i\in\lceil \frac{r}{2}\rceil,$ we define the \emph{$i$-th layer} of $H$ recursively as follows.
The first layer of $H$ is its perimeter, while, if $i\geq 2,$ the $i$-th layer of $H$ is
the $(i-1)$-th layer of the grid created if we remove from $H$ its perimeter.
Given two  odd integers $q,r\in\mathbb{N}_{\geq 3}$ such that $q\leq r$ and an $r$-grid $H,$
we define the \emph{central $q$-grid} of $H$ to be the graph obtained from $H$
if we remove from $H$ its $\frac{r-q}{2}$ first layers.
See \autoref{label_administrativas} for an illustration of the notions defined above.
Given a partially triangulated $r$-grid $H,$ we call \emph{central $q$-grid} of $H$ the subgraph of $H$ induced by the vertices of the central $q$-grid of the underlying grid of $H.$

\begin{lemma}\label{label_emporteroient}
	There exist three functions $\newfun{label_impercepbbly}, \newfun{label_einbegreifen}: \mathbb{N}^{2}\to \mathbb{N},$ and $\newfun{label_presupongamos}:\mathbb{N}\to \mathbb{N}$
	such that if $r,a\in \mathbb{N},$ $H$ is a partially triangulated $h$-grid, where $h\geq \funref{label_impercepbbly}(r,a)+2\cdot \funref{label_presupongamos}(r),$ and
	${\mathcal{S}}=\{S_{1},\ldots, S_{a}\}$ is a collection
	of $a$ subsets of vertices in the central $\funref{label_impercepbbly}(r,a)$-grid of $H$ such that  for every $i\in[a], |S_{i}|\geq \funref{label_einbegreifen}(r,a),$
	then $H$ contains as a contraction a partially triangulated $r$-grid $R$
	such that the model of each vertex of $R$ in $H$ intersects every $S_{i},i\in[a].$ Moreover,  $\funref{label_impercepbbly}(r,a)=\mathcal{O}(r^{4}\cdot 2^a),$
	$\funref{label_einbegreifen}(r,a)=\mathcal{O}(r^{6}\cdot 2^{a}),$ and $\funref{label_presupongamos}(r)=\mathcal{O}(r^{2}).$
\end{lemma}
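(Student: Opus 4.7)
The plan is to reduce \autoref{label_emporteroient} to \autoref{label_automatisation}. The starting observation is that \autoref{label_automatisation} delivers the desired $r$-grid contraction provided we can locate, on the middle horizontal path of some partially triangulated $(n\times m)$-grid obtained as a contraction of $H$, an $(r^2,a,d)$-scattered collection whose $i$-th set is entirely inside the ``trace'' of $S_i$ after contraction. So the entire work lies in exhibiting such a scattered collection by contracting $H$. The buffer of $\funref{label_presupongamos}(r)={\cal O}(r^2)$ layers around the central $\funref{label_impercepbbly}(r,a)$-grid $H'$ will play exactly the role of the margin $\funref{label_unsuspecting}(r)$ required by \autoref{label_automatisation}, so routing below and above the chosen middle path will be available.

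The heart of the argument is the passage from the $2$-dimensional configuration $(S_1,\ldots,S_a)$ inside $H'$ to a $1$-dimensional scattered configuration. First I would fix the middle horizontal path $P$ of $H'$ and contract, for each column $j$ of $H'$, the whole vertical path of that column onto its intersection with $P$; this produces a path (a contraction of $H'$) whose $j$-th vertex $p_j$ carries a ``type'' $\tau(j)\subseteq[a]$ recording which colors appear in column $j$ of $H'$. Because $|S_i|\geq \funref{label_einbegreifen}(r,a)$ and a single column of $H'$ contains at most $\funref{label_impercepbbly}(r,a)$ vertices, each color $i$ appears in at least $|S_i|/\funref{label_impercepbbly}(r,a)=\Omega(r)$ columns; what I actually need is $r^2$ distinct columns per color, suitably scattered. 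To amplify from $r$ to $r^2$ I partition the vertical extent of $H'$ into ${\cal O}(r)$ horizontal slabs and apply the same column-contraction within each slab separately; this multiplies the number of ``effective columns'' per color by a factor $\Theta(r)$ while keeping everything inside a contraction of $H$.

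With these $\Omega(r^2)$ slab-columns per color in hand, the second pigeonhole takes care of combining different types into the full palette $[a]$. Since each slab-column carries a type in $2^{[a]}$, grouping every consecutive block of $2^a$ slab-columns into a ``super-cell'' and contracting each super-cell into a single vertex on $P$ yields, by a pigeonhole over the $2^a$ possible type-patterns and the lower bound on the number of columns per color, at least $r^2$ super-cells whose combined type is the full set $[a]$; this is where the factor $2^a$ in $\funref{label_einbegreifen}(r,a)$ and $\funref{label_impercepbbly}(r,a)$ is consumed. Spacing these panchromatic super-cells out by distance at least $d\geq 2r^2$ (which is possible because their total count is much larger than $r^2\cdot d$) produces a $(r^2,a,d)$-scattered collection ${\cal C}=\{C_1,\ldots,C_a\}$ on the middle horizontal path of a partially triangulated grid $H^{\star}$ which is itself a contraction of $H$, and whose vertical extent is still at least $\funref{label_unsuspecting}(r)$ thanks to the buffer layers.

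Once ${\cal C}$ is produced, \autoref{label_automatisation} applies directly to $H^{\star}$ with parameters $(r,a,d)$ and yields a partially triangulated $r$-grid $R$ as a contraction of $H^{\star}$ (hence of $H$) such that the model of every vertex of $R$ intersects every $C_i$; by construction each $C_i$ lies in the trace of $S_i$, so every model also intersects every $S_i$, as required. The quantitative bounds $\funref{label_impercepbbly}(r,a)={\cal O}(r^5\cdot 2^a)$, $\funref{label_einbegreifen}(r,a)={\cal O}(r^6\cdot 2^a)$, $\funref{label_presupongamos}(r)={\cal O}(r^2)$ arise from combining $\funref{label_unthinkingly}(r,a,d)={\cal O}(a(r^2+d))$ and $\funref{label_unsuspecting}(r)={\cal O}(r^2)$ of \autoref{label_automatisation} with the loss factors of the two pigeonholes (one factor of $r$ for the slab amplification and one factor of $2^a$ for the type-combining). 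The main obstacle I anticipate is precisely the second pigeonhole: ensuring that a sufficient number of panchromatic super-cells can be chosen $d$-apart even when the color classes $S_i$ are highly concentrated in a few columns; this is what dictates the choice of slab height and super-cell width and, ultimately, the polynomial dependence on $r$ together with the exponential dependence on $a$.
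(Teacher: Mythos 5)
Your overall strategy---contract $H$ to a long, thin partially triangulated grid whose middle horizontal path carries an $(r^2,a,d)$-scattered collection with each $C_i$ inside the trace of $S_i$, then invoke \autoref{label_automatisation}---is the same as the paper's, but two steps of your construction do not hold up. First, the claim that grouping consecutive blocks of $2^a$ slab-columns and pigeonholing over types yields at least $r^2$ super-cells whose combined type is all of $[a]$ is false in general: if, say, $S_1$ lives only in the leftmost columns of the central grid and $S_2$ only in the rightmost ones, then only the boundary-straddling blocks can meet both colors, and there are only ${\cal O}(r)$ of those, far fewer than $r^2.$ Note also that panchromatic super-cells are not what \autoref{label_automatisation} needs: it only asks, for each $i,$ for a set $C_i$ of $r^2$ vertices of the middle path whose models meet $S_i,$ with the $C_i$ pairwise disjoint and the union $d$-spread; the ``panchromatic'' merging is performed by the trees built inside \autoref{label_automatisation} itself. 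The paper deals with cells carrying several colors by partitioning each $V_i$ into trace classes and keeping the largest one (its Claim~2), which never requires any single cell or local block to see all colors.

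Second, and more seriously, your slab refinement produces $\Omega(r^2)$ cells per color that are necessarily spread over $\Theta(r)$ different slab rows, yet \autoref{label_automatisation} requires them all to lie on the single middle horizontal path of one partially triangulated grid contraction of $H,$ pairwise at distance more than $d\geq 2r^{2}$ in that grid, with the grid having width at least $\funref{label_unthinkingly}(r,a,d)$ and height at least $\funref{label_unsuspecting}(r).$ You assert this by ``contracting each super-cell into a single vertex on $P$,'' but contracting inside a slab leaves the cells in different rows, while contracting whole columns across all slabs collapses the count back to the $\Omega(r)$ columns per color, which is not enough under the stated bound $\funref{label_einbegreifen}(r,a)={\cal O}(r^{6}\cdot 2^{a}).$ Getting this two-dimensional family of selected cells onto one horizontal path is exactly the technical heart of the paper's proof: columns and rows are selected in residue classes modulo $b=\ell\cdot(a+1)+2$ so that the selected (``heavy'') vertices are pairwise far apart, a window of $\ell$ consecutive unselected columns and rows is reserved so that contracting the remaining bundles still yields a grid with the needed margins, and finally a ``snake'' contraction threads the middle horizontal path of an $(\ell\times n)$-grid through all heavy vertices. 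Your closing spacing argument (``their total count is much larger than $r^{2}\cdot d$'') does not substitute for this: the scatteredness constraint concerns positions along one path of the contracted grid, not raw counts, and without the reserved gaps two chosen cells can end up at distance ${\cal O}(1)$ after contraction.
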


\begin{proof}
Let $\funref{label_unsuspecting}, \funref{label_unthinkingly}$ be the functions of \autoref{label_automatisation}.  We set $\ell: = \max\{2r^{2},\funref{label_unsuspecting}(r)\}=\funref{label_unsuspecting}(r)$ and $n= \max\{\funref{label_unthinkingly}(r,a,\ell),2^{a-1}\cdot r^2 \cdot a\cdot (\ell+1)\}.$ We also set
\begin{align*}
	b:= & \ \ell\cdot (a+1)+2, &
	z:= & \ \lceil\sqrt{n}\rceil, &
	\funref{label_impercepbbly}(r,a):= & \ b \cdot z, \\
	\funref{label_einbegreifen}(r,a):= & \ 2^{a-1}\cdot r^{2}\cdot b^{2},~\mbox{and} &
	\funref{label_presupongamos}(r):= & \ \ell.
\end{align*}
We begin by arguing that the following claim holds:
\medskip

\noindent\emph{Claim 1:} $H$ contains a partially triangulated $(\ell\times n)$-grid $R'$ as a contraction and  there is a collection $\mathcal{V}=\{V_{1},\ldots, V_{a}\}$ of subsets of the vertices of the middle horizontal path of $R',$ where
	\begin{itemize}
		\item for every $i\in[a],$ the model of each vertex $v\in V_{i}$ in $H$ intersects $S_{i},$
		\item for every $i\in[a],$ $|V_{i}|= 2^{a-1}\cdot r^{2},$ and
		\item for every distinct $u,v\in \cupall\mathcal{V}, \mathsf{dist}_{R'}(u,v)> \ell.$
	\end{itemize}

	\noindent\emph{Proof of Claim 1:}
	Let $\breve{H}$ be the central $\funref{label_impercepbbly}(r,a)$-grid of $H$ and keep in mind that $\funref{label_impercepbbly}(r,a)=b\cdot z.$
	Also, let $\mathcal{P}=\{P_{1}, \ldots, P_{\funref{label_impercepbbly}(r,a)}\}$ be the set of
	the vertical paths of $\breve{H},$
	where $P_{i}$ is the $i$-th vertical path of $\breve{H}.$
	For every $j\in[b],$ let $\mathcal{P}_{j}=\bigcup_{i\in[z]}{P_{j+b(i-1)}}.$
	For every $i\in[a],$ let $x_{i}:=\arg\max_{j\in[b]}\{|V(\mathcal{P}_{j})\cap S_{i}|\}.$
	Intuitively, we partition $\mathcal{P}$ into $b$ sets $\mathcal{P}_j,$ $j\in[b],$
	each one consisting of the $j$-th, $(j+b)$-th, $\ldots,$ $(j+(z-1)\cdot b)$-th vertical path of $\breve{H},$ and
	$x_{i}$ is defined as the index $j$ maximizing the size of the  intersection of $V(\mathcal{P}_j)$ with $S_{i}.$
	Observe that, since $|S_{i}|\geq \funref{label_einbegreifen}(r,a),$ by the pigeonhole principle it follows that $|V(\mathcal{P}_{x_{i}})\cap S_{i}|\geq \funref{label_einbegreifen}(r,a)/b.$

	Now, let $\mathcal{L}=\{L_{1}, \ldots, L_{\funref{label_impercepbbly}(r,a)}\}$ be the set of the horizontal paths of $\breve{H},$
	where $L_{i}$ is the $i$-th horizontal path of $\breve{H}.$
	For every $j\in[b],$ let $\mathcal{L}_{j}=\bigcup_{i\in[z]}{L_{j+b(i-1)}}.$
	For every $i\in[a],$ let $y_{i}:=\arg\max_{j\in[b]}\{|V(\mathcal{L}_{j})\cap V(\mathcal{P}_{x_{i}})\cap S_{i}|\}.$
	Intuitively, we partition $\mathcal{L}$ into $b$ sets $\mathcal{L}_j,$ $j\in[b],$
	each one consisting of the $j$-th, $(j+b)$-th, $\ldots,$ $(j+(z-1)\cdot b)$-th horizontal path of $\breve{H},$ and
	$y_{i}$ is defined as the index $j$ maximizing the size of the  intersection of $V(\mathcal{L}_j)$ with $V(\mathcal{P}_{x_{i}})\cap S_{i}.$
	Observe that, since $|V(\mathcal{P}_{x_{i}})\cap S_{i}|\geq \funref{label_einbegreifen}(r,a)/b,$ again by the pigeonhole principle, $|V(\mathcal{L}_{y_{i}})\cap V(\mathcal{P}_{x_{i}})\cap S_{i}|\geq \funref{label_einbegreifen}(r,a)/b^{2}.$
	For every $i\in[a],$ let $Q_{i}:=V(\mathcal{P}_{x_{i}})\cap V(\mathcal{L}_{y_{i}}).$ See \autoref{label_uncultivated} for an illustration of the above.
	\begin{figure}[ht]
		\centering
		\includegraphics[width=7.5cm]{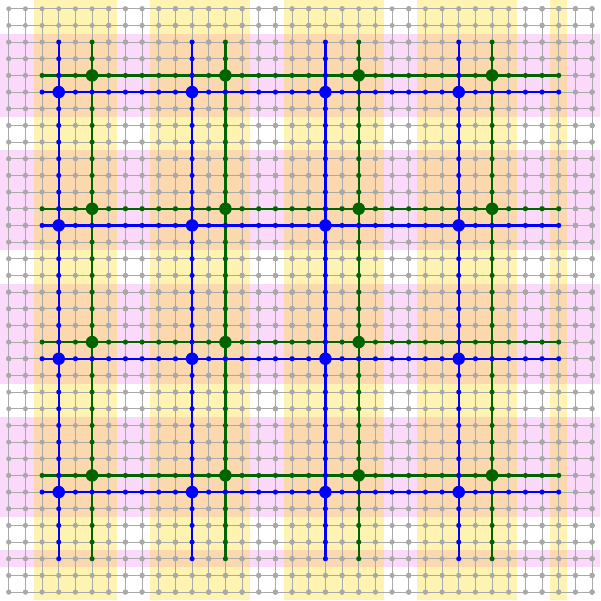}
		\caption{Illustration of a grid $H,$ the set $\mathcal{P}_{x_i}\cup\mathcal{L}_{y_i}$ (depicted in blue), and the set $\mathcal{P}_{x_{i'}}\cup\mathcal{L}_{y_{i'}}$ (depicted in green), with $i\neq i'.$ The grid $\breve{H}$ is the subgrid of $H$ obtained by removing the two first layers of $H.$ The set $Q_{i}$ consists of the blue vertices that are adjacent to four blue vertices. Yellow and pink regions contain the paths of $\hat{\mathcal{P}}$ and $\hat{\mathcal{L}},$ whose edges are contracted in order to obtain the grid $H'.$}
\label{label_uncultivated}
\end{figure}
For further intuition, observe that for every $i\in[a],$
$Q_{i}$ is a set of $z^{2}$ vertices in $\breve{H}$ and for every $u,v\in Q_{i}$ with $u\neq v,$
it holds that $\mathsf{dist}_{\breve{H}}(u,v)\geq b.$
Notice that since $b=\ell\cdot (a+1)+2,$ there is a set of $\ell$ consecutive integers
in $[2,b-1]$ that ``avoid''  every $x_i,$ $i\in[a],$ i.e., there is
a $t\in[2, b-\ell]$ such that  for every $i\in[a],$ $x_{i}\notin[t,t+\ell-1].$
Let $\bar{\mathcal{P}}:=\bigcup_{i\in[t,t+\ell-1]}\mathcal{P}_{i}.$
Also, there exists a $t'\in[2, b-\ell]$ such that  for every $i\in[a],$ $y_{i}\notin[t',t'+\ell-1].$
Let $\bar{\mathcal{L}}:=\bigcup_{i\in[t',t'+\ell-1]}\mathcal{L}_{i}.$
Intuitively, $\bar{\mathcal{P}}$ (resp. $\bar{\mathcal{L}}$) is the union
of $z$ sets of $\ell$ consecutive vertical (resp. horizontal) paths of $\breve{H}$
whose indices ``avoid'' $x_{i}$ (resp. $y_{i}$)
for every $i\in[a].$
In \autoref{label_uncultivated}, $\bar{\mathcal{P}}$ (resp. $\bar{\mathcal{L}}$)
is the set of the vertical (resp. horizontal) paths of $\breve{H}$
that are between yellow (resp. pink) regions.

We denote by $\hat{\mathcal{P}}$ (resp. $\hat{\mathcal{L}}$) the set of the vertical (resp. horizontal) paths of $H$ that contain the paths in $\mathcal{P}\setminus \bar{\mathcal{P}}$ (resp. $\mathcal{L}\setminus \bar{\mathcal{L}}$) as subpaths.
In \autoref{label_uncultivated},  $\hat{\mathcal{P}}$ (resp. $\hat{\mathcal{L}}$) is the set of the vertical (resp. horizontal) paths of $H$ that are drawn inside yellow (resp. pink) regions.
Let $H'$ be the graph obtained from $H$ after contracting every edge of a horizontal (resp. vertical) path of $H$ whose endpoints are in $\hat{\mathcal{P}}$ (resp. $\hat{\mathcal{L}}$).
In \autoref{label_uncultivated}, the graph $H'$ is obtained if we contract every ``horizontal'' edge inside a yellow region and every ``vertical'' edge inside a pink region.
Therefore, $H'$ is a contraction of $H$ and the fact that $H$ is an {$h$-grid, with $h\geq b\cdot z+2\ell$}, implies that $H'$ is a {$q$-grid, with $q\geq (\ell+1) \cdot z+2\ell+1$}.
To get some intuition on this, observe that, in \autoref{label_uncultivated}, the contraction of the ``horizontal'' (resp. ``vertical'') edges inside $(z+1)$-many yellow (resp. pink) regions results into $(z+1)$-many  vertical (resp. horizontal) paths of $H',$ while the rest of vertical or horizontal paths of $H,$ which are at least $\ell\cdot z +2\ell$ many, remain intact after these contractions.

We call a vertex of $H'$ \emph{heavy} if its model in $H$ is a
subset of $V(\cupall\hat{\mathcal{P}})\cap V(\cupall\hat{\mathcal{L}}).$
Notice that  the model of each heavy vertex of $H'$ contains exactly one vertex of each $Q_{i},i\in[a].$
Also, the distance in $H'$ between every two heavy vertices of $H'$ is more than $\ell$ in $H',$ since every path between heavy vertices contains at least $\ell$ vertices that are not heavy.
For every $i\in[a],$ we set $\tilde{V}_{i}$ to be the set of heavy vertices of $H'$ whose model in $H$ intersects $S_{i}$ and observe that for every $i\in[a],$ since
$|Q_{i}\cap S_{i}|\geq 2^{a-1}\cdot r^{2},$ it follows that $|\tilde{V}_{i}|\geq2^{a-1}\cdot r^{2}.$
For every $i\in[a]$, we set $V_i$ to be a set containing \textsl{exactly} $2^{a-1}\cdot r^{2}$ elements of $\tilde{V}_i$.
Let $\mathcal{V}:=\{V_{1},\ldots, V_{a}\}$ and observe that
for every $u,v\in  \cupall\mathcal{V},$ where $u\neq v,$ it holds that $\mathsf{dist}_{H'}(u,v) > \ell.$

To conclude the proof of Claim 1, observe that since $H'$ is a $q$-grid, with $q\geq (\ell+1) \cdot z+2\ell+1$ and $z=\lceil\sqrt{n}\rceil,$ it follows that $R'$ is a contraction of $H'$ (see \autoref{label_elegantemente}) and $\mathcal{V}$ is a collection of subsets of vertices of the middle horizontal path of $R'$ satisfying the claimed conditions.
Here, $n$ is asked to be at least $2^{a-1}\cdot r^2 \cdot a\cdot (\ell+1)$ in order to allow the middle horizontal path of $R'$ to host the $(2^{a-1}\cdot r^2, a,\ell)$-scattered set $\mathcal{V}$.
Claim 1 follows. \hfill$\diamond$
	\begin{figure}[H]
		\centering
		\includegraphics[width=5cm]{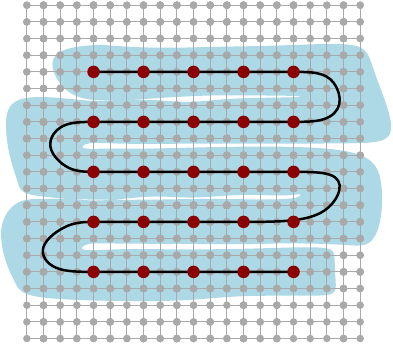}
		\caption{Illustration of the grid $H'$ and how $R'$ ``fits'' in $H'.$
			The red vertices are the heavy vertices of $H'$ and the black line represents the middle horizontal path of $R'.$}
		\label{label_elegantemente}
	\end{figure}

	Following Claim 1, let $\mathcal{V}$ be a collection of vertex sets satisfying the properties above.
	It is easy to see that the sets in $\mathcal{V}$ are not necessarily disjoint.
	For each vertex $v\in \cupall\mathcal{V},$ we define the \emph{trace} of $v$ in ${\mathcal{S}}$
	to be the set $I_{v}:=\{i\in [a]\mid \text{the model of }v\text{ in }H\text{ intersects }S_{i}\}.$
	We say that a set $U\subseteq \cupall\mathcal{V}$ is \emph{full with respect to ${\mathcal{S}}$} if $\bigcup_{v\in U}I_{v}=[a].$
	We now argue that the following claim holds.
	\medskip

	\noindent\emph{Claim 2:} There is some $y\in[a]$ and a collection ${\mathcal{C}}=\{C_{1}, \cdots, C_{y}\}$ of $y$ pairwise disjoint subsets of $\cupall\mathcal{V},$
	each of cardinality $r^{2},$ such that if we pick any vertex from every set $C_{j}$ then the resulting set is full with respect to ${\mathcal{S}}.$
	\smallskip

	\noindent\emph{Proof of Claim 2:}
	Notice that each $V_{i}$ can be partitioned into a collection $\mathcal{V}_{i}$ of $2^{a-1}$ subsets
	such that every two vertices are in the same subset if and only if they have the same trace.
	For every $i\in[a],$ we set $C_{i}:=\arg\max_{V\in\mathcal{V}_{i}}\{|V|\}$ and observe that there is a superset of $\{i\}$ that is the trace of all vertices in $C_i.$
	Since $|V_{i}|= 2^{a-1}\cdot r^{2},$ it follows that $|C_{i}|\geq r^{2}.$
	Moreover, we can assume that every $C_{i}$ contains exactly $r^{2}$ vertices (by removing extra vertices).
	Since the trace of the vertices of every $C_{i}$ contains $i,$  if we pick a vertex from every $C_{i}$ then the resulting set is full with respect to ${\mathcal{S}}.$
	Notice that, by the definition of the sets $C_i, i\in [a],$ for every $i,j\in[a],$ either $C_i \cap C_j = \emptyset$ or $C_{i}=C_{j}.$
	Therefore, we can obtain a collection ${\mathcal{C}}=\{C_i \mid i\in[a] \mbox{ and }\forall j\in [a]\setminus\{i\}, \ C_i \cap C_j = \emptyset\}$ as the desired one. Claim 2 follows. \hfill$\diamond$
	\medskip

	To conclude the proof, consider the graph $R'$ from Claim~1 and the collection ${\mathcal{C}}$ from Claim~2.
	Following Claims~1 and~2, ${\mathcal{C}}$ is a collection of subsets of vertices of the middle horizontal path of $R'$ that is also $(r^{2}, y, \ell)$-scattered in the middle horizontal path of $R'.$
	The lemma now follows by applying \autoref{label_automatisation}.
\end{proof}

Given a graph $G$ and a set $A\subseteq V(G),$ we say
that a graph $H$ is an \emph{$A$-fixed contraction} (resp. \emph{$A$-fixed minor}) of $G$ if $H$ can be obtained from $G$ (resp. a subgraph $G'$ of $G$ where $A\subseteq V(G')$) after contracting edges without endpoints  in $A.$
A graph $H$ is an \emph{$A$-apex partially triangulated $r$-grid} if  it can be obtained by an partially triangulated $r$-grid $\Gamma$ after adding a set $A$
of new vertices and some edges between the vertices of $A$ and $V(\Gamma).$
A \emph{complete $A$-apex partially triangulated $r$-grid} is a graph obtained by an $A$-apex partially triangulated $r$-grid by adding every edge between the vertices of $A$ and the vertices of the grid.

\begin{lemma}\label{label_satisfactory}
	There exist three functions $\funref{label_impercepbbly}, \funref{label_einbegreifen}: \mathbb{N}^{2}\to \mathbb{N},$ and $\funref{label_presupongamos}:\mathbb{N}\to \mathbb{N}$
	such that if $r,a\in \mathbb{N},$
	$H$ is an $A$-apex partially triangulated $h$-grid, where $A$ is a subset of $V(H)$ of size $a$ and $h\geq \funref{label_impercepbbly}(r,a)+2\cdot \funref{label_presupongamos}(r),$
	and
	each vertex $v\in A$ has at least $\funref{label_einbegreifen}(r,a)$ neighbors in the central
	$\funref{label_impercepbbly}(r,a)$-grid of $H\setminus A,$
	then $H$ contains as an $A$-fixed contraction a complete $A$-apex partially triangulated $r$-grid.
	Moreover,  $\funref{label_impercepbbly}(r,a)=\mathcal{O}(r^{4}\cdot 2^a),$
	$\funref{label_einbegreifen}(r,a)=\mathcal{O}(r^{6}\cdot 2^{a}),$ and $\funref{label_presupongamos}(r)=\mathcal{O}(r^{2}).$
\end{lemma}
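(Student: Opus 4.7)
The plan is to reduce directly to \autoref{label_emporteroient} applied to the ``non-apex'' part $H \setminus A$, choosing as the collection ${\cal S}$ the neighborhoods (restricted to the central grid) of the apex vertices in $A$.

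First I would set $H' := H \setminus A$, which by hypothesis is a partially triangulated $h$-grid with $h \geq \funref{label_impercepbbly}(r,a) + 2 \cdot \funref{label_presupongamos}(r)$. For each apex vertex $v \in A$, define
\[
S_v := N_H(v) \cap V(\breve{H}'),
\]
where $\breve{H}'$ denotes the central $\funref{label_impercepbbly}(r,a)$-grid of $H'$. By hypothesis, $|S_v| \geq \funref{label_einbegreifen}(r,a)$ for every $v \in A$, and the collection ${\cal S} := \{S_v \mid v \in A\}$ consists of $a$ subsets of vertices of $\breve{H}'$. Thus $H'$ and ${\cal S}$ satisfy the hypotheses of \autoref{label_emporteroient}.

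Applying \autoref{label_emporteroient} to $H'$ and ${\cal S}$ yields a partially triangulated $r$-grid $R$ that is a contraction of $H'$, such that for every vertex $x \in V(R)$, the model $M_x \subseteq V(H')$ of $x$ intersects every $S_v$, $v \in A$. Crucially, since all the contractions take place in $H' = H \setminus A$, they involve no edge incident to $A$; hence the same sequence of contractions applied in $H$ yields $H$ as an $A$-fixed contraction of a graph $\widetilde{R}$ whose underlying ``grid part'' is $R$ and whose vertex set additionally contains $A$, with the edges between $A$ and $V(R)$ exactly those inherited from $H$ after contraction.

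Finally, it remains to observe that for every $x \in V(R)$ and every $v \in A$, the set $M_x \cap S_v$ is non-empty by the conclusion of \autoref{label_emporteroient}; picking any $u \in M_x \cap S_v$, the edge $\{u,v\} \in E(H)$ survives the contraction (as $v \notin V(H')$ is never touched, and $u$ is identified with $x$), giving an edge $\{x,v\}$ in $\widetilde{R}$. Therefore every vertex of $R$ is adjacent in $\widetilde{R}$ to every vertex of $A$, which is exactly the definition of a complete $A$-apex partially triangulated $r$-grid. Since no step incurred any growth in the quantitative parameters beyond what \autoref{label_emporteroient} already requires, the functions $\funref{label_impercepbbly}$, $\funref{label_einbegreifen}$, $\funref{label_presupongamos}$ (and their asymptotic bounds) transfer verbatim. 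The only conceptual point to check carefully is that no contraction of \autoref{label_emporteroient} affects vertices of $A$, which is immediate since the lemma is stated and proved entirely inside $H'$.
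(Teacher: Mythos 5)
Your proposal is correct and follows essentially the same route as the paper, which derives \autoref{label_satisfactory} by applying \autoref{label_emporteroient} to $H\setminus A$ with $S_i$ taken to be the neighbors of the $i$-th apex vertex in the central $\funref{label_impercepbbly}(r,a)$-grid of $H\setminus A$. Your additional verification that the contractions avoid $A$ (so they are $A$-fixed) and that each apex--model edge survives is exactly the implicit content of the paper's one-line derivation.
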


Notice that by we can derive \autoref{label_satisfactory} from \autoref{label_emporteroient} by applying the latter for $H:=H\setminus A$ and $S_i, i\in[a]$ to be the set of neighbors of $v_i \in A$ in the central $\funref{label_impercepbbly}(r,a)$-grid of $H\setminus A.$

\subsection{The proof}
\label{label_reposadamente}
In this subsection we present some additional results that will allow us to prove \autoref{lemma_bidim_branch}, and we conclude with its proof.

The following easy observation intuitively states that every planar graph $H$ is a minor of a big enough grid, where the relationship between the size of the grid and $|V(H)|$ is linear (see e.g.,~\cite{RobertsonST94quic}).
\begin{proposition}\label{label_verbindungen}
	There exists a function $\newfun{label_lebeziatnikov}:\mathbb{N}\to\mathbb{N}$ such that every planar graph on $n$ vertices is a minor of the
	$\funref{label_lebeziatnikov}(n)$-grid. Moreover, $\funref{label_lebeziatnikov}(n)=\mathcal{O}(n).$
\end{proposition}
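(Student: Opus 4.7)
The plan is to invoke the classical existence of a linear-size visibility representation of a planar graph and then read off the minor relation directly from such a representation. Recall that a visibility representation of a planar graph $G$ assigns to each vertex $v$ a horizontal closed segment $s_v$ and to each edge $uv$ a vertical closed segment $e_{uv}$ whose endpoints lie on $s_u$ and $s_v$, with all segments pairwise non-crossing except at the prescribed incidences. By the theorem of Tamassia and Tollis, every planar graph on $n$ vertices admits such a representation on an integer grid whose bounding box has both width and height of order ${\cal O}(n)$.

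Starting from this representation, take the ambient grid $\Gamma$ of side $\funref{label_lebeziatnikov}(n) = {\cal O}(n)$ whose grid points coincide with the integer points of the bounding box. For every vertex $v$ of $G$, set the model $M_v$ to be the set of grid vertices of $\Gamma$ lying on the horizontal segment $s_v$; this is a horizontal subpath of $\Gamma$. For every edge $uv$ of $G$, the vertical segment $e_{uv}$ induces a vertical path $Q_{uv}$ of $\Gamma$ whose two endpoints belong to $M_u$ and $M_v$ respectively, and whose interior vertices lie in no other model, since horizontal segments occupy distinct rows and vertical segments cross horizontal segments only at the prescribed incidences. Moreover, distinct paths $Q_{uv}$ and $Q_{u'v'}$ are internally vertex-disjoint, because two distinct vertical segments of the representation either lie in different columns or in the same column but at disjoint $y$-ranges (by the non-crossing property). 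Contracting each $M_v$ to a single vertex and each $Q_{uv}$ to a single edge exhibits $G$ as a minor of $\Gamma$.

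The only obstacle to navigate is that the sharpest form of the visibility representation theorem is typically stated for $2$-connected planar graphs. For a general planar graph $G$, one first passes to a planar $2$-connected augmentation $G^+$ on ${\cal O}(n)$ vertices via a standard local construction, applies the theorem to $G^+$ to realize it as a minor of an ${\cal O}(n)$-grid, and then recovers $G$ as a minor of the same grid by deleting the auxiliary vertices and edges. This yields the claimed bound $\funref{label_lebeziatnikov}(n) = {\cal O}(n)$.
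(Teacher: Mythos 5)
Your proof is correct, but it follows a genuinely different route from the paper: the paper does not prove this proposition at all, it simply quotes the classical bound of Robertson, Seymour and Thomas (``Quickly excluding a planar graph''), whose argument works directly with a planar drawing, whereas you derive the statement from the Tamassia--Tollis/Rosenstiehl--Tarjan theorem on integer-grid visibility representations. Your reduction from a visibility representation to a minor model is sound: the grid points of each horizontal bar $s_v$ form a connected (horizontal) branch set, the grid points of each vertical edge segment form a path whose endpoints lie in the two relevant branch sets, and the pairwise non-crossing property of the representation is exactly what guarantees that these paths are internally disjoint from all bars and from each other -- note that this non-crossing property alone already suffices, so your appeal to ``distinct rows'' is a harmless but unnecessary extra assumption. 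The $2$-connectivity caveat is also handled correctly: one can augment any planar graph to a $2$-connected planar graph by adding edges only (so the vertex count stays $n$), apply the representation theorem to the augmentation, and delete the extra edges at the end; alternatively one could cite the versions of the visibility theorem that hold for all planar graphs and skip the augmentation. What each approach buys: the paper's citation keeps the exposition short and leans on a standard reference with an explicit $2n\times 2n$ bound, while your argument is essentially self-contained modulo a well-known linear-time drawing theorem, is constructive, and gives the same ${\cal O}(n)$ side length (with a concrete constant inherited from the $(2n-5)\times(n-1)$ representation), which is all that \autoref{label_beschaffenheit} needs.
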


The next result intuitively states that given a graph $G$ and a set $A\subseteq V(G),$ a ``big enough'' (in terms of $a_{\mathcal{F}},s_{\mathcal{F}},$ and $k$) complete $A$-apex partially triangulated grid of $G$ is a structure that ``forces'' every set $S\subseteq V(G)$ of size at most $k$ such that $G\setminus S \in \mathbf{excl}(\mathcal{F})$ to intersect $A.$

\begin{lemma}\label{label_beschaffenheit}
	There exists a function $\newfun{label_understanding}: \mathbb{N}^3 \to \mathbb{N}$ such that if
	$\mathcal{F}$ is a finite family of graphs, $k\in\mathbb{N},$ and $G$ is a graph that  contains a complete $A$-apex partially triangulated $\funref{label_understanding}(a_{\mathcal{F}},s_{\mathcal{F}},k)$-grid $H$
as an $A$-fixed minor for some $A\subseteq V(G)$ with $|A|= a_{\mathcal{F}},$
then
for every set $S\subseteq V(G)$ that intersects the models of at most $k$ vertices of $H$ and such that $G\setminus S \in \mathbf{excl}(\mathcal{F}),$
it holds that  $S\cap A\neq\emptyset.$
Moreover $\funref{label_understanding}(a_{\mathcal{F}},s_{\mathcal{F}},k)=\mathcal{O}(\sqrt{(k+{a_{\mathcal{F}}}^2+1)\cdot {s_{\mathcal{F}}}}).$
\end{lemma}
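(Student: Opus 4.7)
The plan is to argue the contrapositive: suppose $S\subseteq V(G)$ satisfies $|S|\le k$ and $S\cap A=\emptyset$; I show that $G\setminus S$ contains some graph of ${\cal F}$ as a minor, contradicting $G\setminus S\in{\sf exc}({\cal F})$.

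First, I fix $H\in{\cal F}$ realising the apex number: there exists $A_H\subseteq V(H)$ with $|A_H|=a:=a_{\cal F}$ such that $P:=H\setminus A_H$ is planar, and $|V(H)|\le s:=s_{\cal F}$. Since $|V(P)|\le s-a$, \autoref{label_verbindungen} yields that $P$ is a minor of the $\funref{label_lebeziatnikov}(s-a)$-grid. Combining the embedding of $P$ in a subgrid with a routing of the at most $\binom{a}{2}$ apex-apex edges of $H[A_H]$ through vertex-disjoint length-two paths of the form $u$--$w$--$v$ (one grid vertex $w$ per edge, using that every apex is adjacent to every grid vertex), I conclude that $H$ is a minor of a complete $A_H$-apex partially triangulated $t$-grid, for some $t=O(\sqrt{s+a^{2}})$. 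The improvement from side $O(s)$ to side $O(\sqrt{s+a^{2}})$ is obtained by exploiting the partial triangulation to pack the $O(s-a)$ planar vertices into a grid region of area $O(s)$ rather than side $O(s)$, together with only $O(a^{2})$ extra area for the apex-routing paths.

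Second, I pull this structure back to $G\setminus S$ via the $A$-fixed minor assumption. The complete $A$-apex partially triangulated $r$-grid is an $A$-fixed minor of $G$ via some model $\mu$, whose grid-vertex bags are pairwise disjoint subsets of $V(G)\setminus A$. Since $S\cap A=\emptyset$ and $|S|\le k$, removing $S$ destroys at most $k$ such bags, while $A$ and its complete adjacency to every surviving bag remain intact. Thus $G\setminus S$ admits, as an $A$-fixed minor, the complete $A$-apex partially triangulated $r$-grid with at most $k$ grid vertices deleted. Now I extract an intact $t$-subgrid by a packing argument: partition the $r\times r$ grid into $\lfloor r/t\rfloor^{2}$ pairwise-disjoint axis-aligned $t\times t$ blocks; each block, together with $A$, inherits the structure of a complete $A$-apex partially triangulated $t$-grid. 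Each deleted grid vertex belongs to at most one block, so if $\lfloor r/t\rfloor^{2}>k$ then some block is entirely intact. Choosing
\[
r \;=\; \Theta\!\left(\sqrt{(k+a^{2}+1)\,s}\right) \;=\; \Theta(t\sqrt{k+1})
\]
guarantees this, and the intact block contains $H$ as a minor by the first paragraph, yielding the required contradiction.

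I expect the main obstacle to be the improved embedding claim of the first paragraph: that $H$ is a minor of a complete $A_H$-apex partially triangulated $t$-grid with $t=O(\sqrt{s+a^{2}})$, rather than the $t=O(s)$ that would follow directly from \autoref{label_verbindungen}. Without this improvement, the packing step would produce only $r=O(s\sqrt{k+1})$, which is substantially weaker than the claimed bound. The improvement hinges on (i) the partial triangulation freedom, which allows placing the planar part $P$ into a region of area $O(s)$ (rather than an $O(s)\times O(s)$ grid), and (ii) the completeness of the $A$-apex, which reduces the cost of realising each apex-apex edge to a single dedicated grid vertex; together these control the $a^{2}$ contribution to the final bound.
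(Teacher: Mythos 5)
Your overall architecture matches the paper's proof (work with a minimum-apex member of ${\cal F}$, embed its planar part via \autoref{label_verbindungen}, cut the big apex-grid into disjoint blocks, and pigeonhole over the at most $k$ blocks whose models meet $S$, using $S\cap A=\emptyset$ and the completeness of the apex edges), but the step you yourself flag as the crux is a genuine gap. You claim that the planar part $P=H\setminus A_H$, on at most $s-a$ vertices, can be realized as a minor inside a grid block of side $t={\cal O}(\sqrt{s+a^{2}})$, i.e.\ of \emph{area} ${\cal O}(s)$. Nothing supports this: \autoref{label_verbindungen} (the only embedding tool here, and the one the paper uses) gives side $\funref{label_lebeziatnikov}(s-a)={\cal O}(s)$, i.e.\ area ${\cal O}(s^{2})$, and the improvement to area ${\cal O}(n)$ for an arbitrary planar $n$-vertex graph is false in general (there are bounded-degree planar graphs every grid minor model of which needs area $\omega(n)$, e.g.\ Leighton-type tree-of-meshes lower bounds). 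Your appeal to ``partial triangulation freedom'' does not rescue this: in \autoref{label_beschaffenheit} the partially triangulated grid is part of the \emph{hypothesis} (in the application it arises from contracting a canonical partition of a flat wall), so its extra edges are whatever they happen to be; you may ignore them, but you may not design them, and in any case ${\cal F}$ is arbitrary, so $P$ is an arbitrary planar graph on at most $s-a$ vertices. Since your final choice $r=\Theta(t\sqrt{k+1})$ relies precisely on $t={\cal O}(\sqrt{s+a^{2}})$, the proof as written does not establish the stated bound; with the honest block side ${\cal O}(s)$ your packing only yields $r={\cal O}(s\sqrt{k+1})$, as you note.

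For comparison, the paper never attempts such an area-efficient embedding. It takes $m=\funref{label_lebeziatnikov}(s-a)={\cal O}(s)$ and packs $k+a^{2}+1$ pairwise disjoint complete $A$-apex partially triangulated $m$-grid blocks into the given $r$-grid; since $|S|\le k$ and $S\cap A=\emptyset$, at least $a^{2}+1$ blocks have all their grid-vertex models untouched by $S$. One untouched block hosts the planar part of a member $L\in{\cal F}$ attaining the apex number, and the remaining $a^{2}$ untouched blocks are used to realize the at most $\binom{a}{2}$ adjacencies among the apex vertices of $L$, one spare block per pair, via the complete apex edges. Your alternative of realizing the apex--apex edges inside the \emph{same} block through $\binom{a}{2}$ dedicated grid vertices is perfectly sound (and economizes blocks, needing only one intact block instead of $a^{2}+1$), but it is orthogonal to the real issue: the unsupported $\sqrt{s}$-side embedding of the planar part is what your bound hinges on, and it is exactly the ingredient that neither \autoref{label_verbindungen} nor any other result in the paper provides.
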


\begin{proof}
For simplicity, we set $s=s_{\mathcal{F}}$ and $a=a_{\mathcal{F}}.$
Let $G$ be a graph, $m=\funref{label_lebeziatnikov}(s-a),$ where $\funref{label_lebeziatnikov}$ is the function of
\autoref{label_verbindungen}, and $r=\big\lceil \sqrt{(k+a^{2}+1)\cdot m}\big\rceil.$
We set $\funref{label_understanding}(a,s,k)=r$
and we notice that since $m=\funref{label_lebeziatnikov}(s-a) =\mathcal{O}(s),$ it holds that $\funref{label_understanding}(a,s,k) =\mathcal{O}(\sqrt{(k+a^2+1)\cdot s}).$

Observe that since $r=\big\lceil \sqrt{(k+a^{2}+1)\cdot m}\big\rceil,$ $V(H\setminus A)$ can be partitioned
into $(k+a^{2} + 1)$ vertex sets $V_1, \ldots, V_{k+a^{2} + 1}$ such that, for every $i\in[k+a^{2} + 1],$ the graph $H[V_i]$ is a partially triangulated
$m$-grid.
Let $\mathcal{H}=\big\{H[V_{i}\cup A]\mid i\in [k+a^{2}+1]\big\}$ and notice that every $R\in\mathcal{H}$ is a complete $A$-apex partially triangulated $m$-grid.
Our aim is to prove that  if $S$ is a subset of $V(G)$ that intersects the models of at most $k$ vertices of $H$ and such that $G\setminus S \in \mathbf{excl}(\mathcal{F}),$
then $S\cap A\neq \emptyset.$
Suppose towards a contradiction that  $S\cap A=\emptyset.$
Since $S$ intersects the models of at most $k$ vertices of $H$ and $|\mathcal{H}|=k+a^{2}+1,$ there is a collection $\mathcal{H}'\subseteq\mathcal{H}$
of size $a^{2}+1$ such that for every $R\in\mathcal{H}'$ and every $v\in V(R),$
$S$ does not intersect the model of $v$ in $G.$
This implies that $\cupall\mathcal{H}'\prem G\setminus S.$
Let $L$ be a graph in $\mathcal{F}$ whose apex number is $a.$
We arrive to a contradiction by proving that $L\prem\cupall\mathcal{H}'.$
To see why $L\prem\cupall\mathcal{H}',$ fix a graph $H'\in\mathcal{H}'$ and observe that, since $m=\funref{label_lebeziatnikov}(s-a),$ \autoref{label_verbindungen} implies that  every planar graph on $s-a$ vertices is a minor of $H'\setminus A$ and
every graph on $a$ vertices is a minor of $\cupall (\mathcal{H}'\setminus \{H'\}).$
The latter is a consequence of the fact that $|\mathcal{H}'\setminus \{H'\}|=a^{2}$
and every $R\in\mathcal{H}'\setminus \{H'\}$ is a complete $A$-apex $m$-grid; thus, for each pair of vertices in $A,$ we can find a path connecting them through some $H''.$
\end{proof}

We are now ready to prove \autoref{lemma_bidim_branch}.

\begin{proof}[Proof of \autoref{lemma_bidim_branch}]
	For simplicity, we set $a=a_{\mathcal{F}}$ and $s=s_{\mathcal{F}}.$
	We set
	\begin{align*}
		r: = & \ \funref{label_understanding}(a,s,k) =\mathcal{O}((a+\sqrt{k})\cdot \sqrt{s}),
		&	\funref{@prepossession}(a, s, k) := & \ \funref{label_impercepbbly}(r,a)+2\cdot \funref{label_presupongamos}(r) +2,\\
		\funref{@proclamation}(a,s,k) := & \ \funref{label_einbegreifen}(r,a), \mbox{ and } &
		\funref{@engrandecerla}(a,s,k):= & \ \funref{label_presupongamos}(r).
	\end{align*}
	Notice that $\funref{@prepossession}(a,s,k)=\mathcal{O}(2^a \cdot  s^{5/2} \cdot k^{5/2}),$
	$\funref{@proclamation}(a,s,k)=\mathcal{O}(2^a \cdot s^3 \cdot k^3),$ and $\funref{@engrandecerla}(a,s,k)=\mathcal{O}((a^2 +k)\cdot s).$
	Let $(W,\mathfrak{R})$ be a flatness pair of $G\setminus A$ of height $h,$ where $h$ is an odd integer with $h\geq \funref{@prepossession}(a, s, k),$ and let $\tilde{\mathcal{Q}}$ be a $(W,\mathfrak{R})$-canonical partition of $G\setminus A,$ such that if $A'$ is the set of vertices of $A$ that are adjacent in $G$ to at least $\funref{@proclamation}(a,s,k)$ $\funref{@engrandecerla}(a,s,k)$-internal bags of $\tilde{\mathcal{Q}},$ then $|A'| \geq a.$

	We contract every bag in $\tilde{\mathcal{Q}}$ to a vertex.
	Since $(W,\mathfrak{R})$ is a flatness pair, this results into a planar graph that is a partially triangulated $(h-2)$-grid $\bar{\Gamma}$ (whose vertices correspond to the internal bags of $\tilde{\mathcal{Q}}$) together with an extra vertex $u_\mathsf{ext}$ (which corresponds to the external bag of $\tilde{\mathcal{Q}}$) that is adjacent to all the vertices in the perimeter of $\bar{\Gamma}.$
	We contract an edge between $u_\mathsf{ext}$ and a vertex in the perimeter of $\bar{\Gamma}$
	and we denote by $\Gamma$ the obtained partially triangulated $(h-2)$-grid.
	Notice that $\Gamma$ is an  $A$-apex partially triangulated $(h-2)$-grid that is an $A$-fixed contraction of $G.$
	Moreover, observe that if a vertex $v\in A$ is adjacent, in $G,$ to an $\funref{@engrandecerla}(a,s,k)$-internal bag of $\tilde{\mathcal{Q}},$
	then, since $\funref{@engrandecerla}(a,s,k)=\funref{label_presupongamos}(r)$ and $h-2\geq  \funref{label_impercepbbly}(r,a)+2\cdot \funref{label_presupongamos}(r),$ it is also adjacent to a vertex in the central $\funref{label_impercepbbly}(r,a)$-grid of $\Gamma\setminus A.$
	Thus, each vertex in $A'$ has at least $\funref{@proclamation}(a,s,k)$ neighbors  in the central  $\funref{label_impercepbbly}(r,a)$-grid of $\Gamma\setminus A.$
	We remove extra vertices from $A'$ until $|A'|=a,$ and we set $\Gamma' = \Gamma\setminus (A\setminus A').$
	By \autoref{label_satisfactory} applied to $\Gamma'$ and $A',$ $\Gamma'$ contains  a complete $A'$-apex partially triangulated $r$-grid $R$ as an $A'$-fixed contraction.

	Observe that $R$ is also an $A'$-fixed minor of $G.$
	Since $r=\funref{label_understanding}(a,s,k),$ \autoref{label_beschaffenheit} implies that
for every set $S\subseteq V(G)$ that intersects at most $k$ internal bags of every $(W,\mathfrak{R})$-canonical partition of $G\setminus A,$
it holds that  $S\cap A'\neq\emptyset.$
\end{proof}

\section{Concluding remarks}\label{label_travaillerez}

In this paper we prove that, for every minor-closed graph class $\mathcal{G},$
there is a  4-fold exponential, in $k,$
bound on the order of the minor obstructions for the set of all graphs that are $k$-apices of $\mathcal{G},$ namely $\mathbf{obs}(\mathcal{A}_k (\mathcal{G})).$
Improving this bound is an open challenge. We believe that any such attempt  should radically overcome the current ``treewidth-based'' state of the art on bounding obstructions, dating back to the classic ideas of~\cite{AbrahamsonF93,AdlerGK08comp,CattellDDFL00onco,CourcelleDF97,DowneyF95survey,FellowsJ13fpti,FellowsL88nonc,FellowsL89anan,FellowsL94onse,Lagergren98,Lagergren91anup,LagergrenA91mini,RobertsonS86GMV,RobertsonS95XIII,Thomas90amen}.
Note that the results of Dinneen \cite{Dinneen97} give an exponential lower bound on the size of the set $\mathbf{obs}(\mathcal{A}_k (\mathcal{G})).$
This lower bound on the size of the obstruction set readily gives a polynomial lower bound on the order of the graphs in $\mathbf{obs}(\mathcal{A}_k (\mathcal{G})).$
\medskip

In the next paper of this series we deal with the algorithmic complexity of
recognizing $k$-apices of an arbitrary minor-closed graph class $\mathcal{G}.$
Namely, in~\cite{SauST21kapiII} (whose conference version is \cite{SauST20anfp})
we construct a  $2^{\mathsf{poly}(k)} \cdot n^3$-time algorithm for this problem,
which can be improved to one running in $2^{\mathsf{poly}(k)} \cdot n^2$-time
when $\mathcal{G}$ excludes some apex graph as a minor,
where $\mathsf{poly}(k)$ is a polynomial whose degree depends on the order of the obstructions for $\mathcal{G}.$
These algorithms are strongly based on the  combinatorial results of \autoref{label_villebrequin}, \autoref{label_reconquistasen}, and \autoref{label_nominalistic}.

\paragraph{Acknowledgements.}
We are all thankful to Archontia C. Giannopoulou for long discussions on topics of this paper. The third author is especially thankful to Prof. Michael Fellows for all the inspiration and knowledge that he generously offered.
We would also like to thank the anonymous reviewers for their valuable comments that improved the presentation of the paper.

\addcontentsline{toc}{section}{References}
\bibstyle{plainurl}

\end{document}